\newcommandx{\jow}[2][1=]{\todo[linecolor=orange,backgroundcolor=orange!25,bordercolor=orange,#1]{#2}}
\newcommandx{\mateus}[2][1=]{\todo[linecolor=blue,backgroundcolor=blue!25,bordercolor=blue,#1]{#2}}
\def\R{\mathbb{R}}
\def\P{\mathbb{P}}
\def\N{\mathbb{N}}
\def\Z{\mathbb{Z}}
\newcommandx{\eq}{\approxeq}
\newtheorem{theorem}{Theorem}
\newtheorem{remark}[theorem]{Remark}
\newtheorem{prop}[theorem]{Proposition}
\newtheorem{lemma}[theorem]{Lemma}
\newtheorem{example}[theorem]{Example}
\numberwithin{equation}{section}
\numberwithin{theorem}{section}
\begin{document}
\title[Stability of sphere packings]{A quantitative stability result for the sphere packing problem in dimensions 8 and 24}

\author{K\'aroly J. B\"or\"oczky, Danylo Radchenko, and Jo\~ao P. G. Ramos}
\begin{abstract}
We prove explicit stability estimates for the sphere packing problem in dimensions 8 and 24, showing that, in the lattice case, if a lattice is $\sim \varepsilon$ close to satisfying the optimal density, then it is, in a suitable sense,  close to the $E_8$ and Leech lattices, respectively. In the periodic setting, we prove that, under the same assumptions, we may take a large `frame' through which our packing locally looks like $E_8$ or $\Lambda_{24}.$  

Our methods make explicit use of the magic functions constructed in \cite{Via17} in dimension 8 and in \cite{CKMRV17} in dimension 24, together with results of independent interest on the abstract stability of the lattices $E_8$ and $\Lambda_{24}.$ 
\end{abstract}

\maketitle

\tableofcontents

\section{Introduction}

Consider a disjoint collection of unit balls in the Euclidean space $\R^n.$ A natural question regarding these arrangements of points is the following: for a fixed dimension $n \in \N$, what is the \emph{maximal} density among all such configurations? Moreover, what are the configurations attaining the maximum? Such a configuration is generally called a \emph{sphere packing}, and the aforementioned problems are known as versions of the (Euclidean) \emph{sphere packing problem}, a pillar of modern metric geometry.

This problem has eluded mathematicians for centuries. Indeed, even if we restrict our attention to lattice packings, the largest density is only known in a few dimensions: for $n=2$ by the work of Lagrange \cite{Lag73} from 1773, for $n=3$ by Gauss \cite{Gau31} from 1831, for $n=4,5$ by Korkin and Zolotarev \cite{KoZ72,KoZ77} from around 1875, for $n=6,7,8$ by Blichfeldt \cite{Bli34} from 1934,
and for $n=24$ by Cohn, Kumar \cite{CoK09} from 2009.

Concerning the densest \emph{general} packings of congruent balls in $\R^n$, even less is known. Around 1900, already Minkowski observed that saturated packings of congruent balls in $\R^n$ --- that is, when no extra ball can be inserted without violating the packing property --- have density at least $2^{-n}$. This trivial lower bound has been only slighly improved asymptotically even after significant efforts since Minkowski's times. The current record is due to Venkatesh \cite{Ven13}, who proved that there exist lattice packings of spheres for all sufficiently large $n$ whose density is at least $65{,}963n\,2^{-n}$, and that for infinitely many values of~$n$ there exist lattice packings of spheres whose density is at least $\frac{n\log\log n}{2^{n+1}}$. 
On the other hand, Kabatyanskii and Levenshtein \cite{KaL78} still hold the best asymptotic upper bound $2^{-0.599\ldots n+o(n)}$ to date for the density of packings of congruent spheres in $\R^n$ using the linear programming bound for sphere packings on $S^{n-1}$. This bound was slighly improved by the pioneering work of Cohn and Elkies \cite{CoE03}, which applied the linear programming method directly in the Euclidean setting. In particular, these bounds, together with numerical computations, yielded almost exact upper bounds in dimensions $n=8$ and $n=24$. As the following list shows, the maximal density of sphere packings is only known in dimensions when a lattice with very special metric properties is available $(n= 8, 24)$, or where low dimensionality allows for more direct combinatorial arguments, possibly combined with computational tools $(n=2, 3)$.

\begin{enumerate}
	\item[$n=2:$] The so-called hexagonal lattice (generated by two side vectors of an equilateral triangle) gives the optimal packing of circles in two dimensions according to Thue \cite{Thu92,Thu10}. However, Thue's proof lacked a compactness argument which was later provided by L. Fejes Toth \cite{FTL40} and Segre and Mahler \cite{SeM44}.
	\item[$n=3:$] The famous \emph{Kepler's conjecture} ({\it cf.}~\cite{Kep11}) stated that the best density in three dimensions is achieved by the face centered cubic (fcc) lattice; namely, by piling balls as a pile of oranges in the supermarket shelf (or alternatively, as cannonballs in a pirate ship). 
	This was proven by Hales~\cite{Hal05} with parts of the argument relying on computer calculations, and a formally verified proof was given by Hales {\it et al} \cite{Hal17} in 2017. 
	\item[$n=8:$] In 2016, Maryna Viazovska \cite{Via17} made a major breakthrough 
	by solving the sphere packing problem in dimension $8$, confirming the optimality of the $E_8$ lattice (see Theorem~\ref{E8} below). The paper \cite{Via17} used the theory of modular forms in an ingenious combination with the linear programming bound of Cohn and Elkies~\cite{CoE03}. We note that $E_8$ is the unique even unimodular lattice in 8 dimensions according to Mordell \cite{Mor38} (see for example Griess \cite{Gri03} and Elkies \cite{Elkies} for simpler proofs).
	\item[$n=24:$] Cohn, Kumar, Miller, Radchenko, Viazovska \cite{CKMRV} established the optimality of the Leech lattice $\Lambda_{24}$ in $24$ dimensions (see Theorem~\ref{Leech} below). The Leech lattice is the unique even unimodular lattice in $\R^{24}$ whose minimal length is $2$.
\end{enumerate}


Our focus in this manuscript will be on the \emph{stability question} for sphere packings in dimensions 8 and 24. That is, if a packing $\Xi + B^n$, $n \in\{8,24\},$ is \emph{almost optimal} in the aforementioned results for dimensions 8 and 24, must it be in some sense close to the optimal packing --- $E_8$ for $n=8$ and $\Lambda_{24}$ for $n=24$? 

In order to start our discussion we recall the exact statement of the main result in \cite{Via17}.

\begin{theorem}[Viazovska]
	\label{E8}
	If $\Xi+\frac{\sqrt{2}}2\,B^8$ is a periodic packing 
	in $\R^8$, then its center density is at most $1$, with equality  if and only if $\Xi$ is congruent to $E_8$.
\end{theorem}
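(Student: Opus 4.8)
\medskip
\noindent\textbf{Proof strategy.} The plan is to follow the template of \cite{CoE03} and \cite{Via17}: first extract the bound from the Cohn--Elkies linear programming inequality applied to a carefully chosen radial auxiliary function, and then obtain the equality case from the rigidity of that inequality together with Mordell's classification of $8$-dimensional even unimodular lattices.

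\smallskip
\noindent\emph{Step 1: the linear programming inequality.} I would write the periodic packing as a disjoint union $\Xi=\bigcup_{j=1}^{N}(v_j+\Lambda)$, with $\Lambda\subset\R^8$ a lattice and $v_1,\dots,v_N$ in distinct cosets of $\Lambda$; then its center density is $N/\mathrm{covol}(\Lambda)$, and the packing condition is exactly that $|v_j-v_k+x|\ge\sqrt2$ for all $x\in\Lambda$ and all $(x,j,k)\ne(0,j,j)$. Suppose we are given a radial Schwartz function $f\colon\R^8\to\R$ with $f(0)=\widehat f(0)=1$, with $\widehat f(\xi)\ge0$ for every $\xi$, and with $f(x)\le0$ whenever $|x|\ge\sqrt2$. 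Applying Poisson summation over $\Lambda$ at the shifts $v_j-v_k$ and summing over $1\le j,k\le N$ gives
\[
N f(0)\ \ge\ \sum_{j,k=1}^{N}\sum_{x\in\Lambda}f(v_j-v_k+x)\ =\ \frac{1}{\mathrm{covol}(\Lambda)}\sum_{\xi\in\Lambda^{*}}\widehat f(\xi)\,\Bigl|\sum_{j=1}^{N}e^{2\pi i\langle\xi,v_j\rangle}\Bigr|^{2}\ \ge\ \frac{\widehat f(0)\,N^{2}}{\mathrm{covol}(\Lambda)},
\]
where the first inequality uses $f\le0$ on $\{|x|\ge\sqrt2\}$ (all terms other than the $N$ copies of $f(0)$ are nonpositive) and the last one keeps only the $\xi=0$ term (using $\widehat f\ge0$). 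Hence $N/\mathrm{covol}(\Lambda)\le f(0)/\widehat f(0)=1$, and the whole problem is reduced to producing such an $f$.

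\smallskip
\noindent\emph{Step 2: the magic function.} This is the heart of the matter, and the step I expect to be by far the hardest. I would look for $f$ as above which, in addition, vanishes on exactly the spheres $|x|=\sqrt{2m}$, $m\ge1$ --- to second order for $m\ge2$ and to first order at $\sqrt2$ --- with $\widehat f$ vanishing to second order on all of them, so that these radii coincide with the vector lengths of $E_8$ (which is what makes the bound sharp). Concretely, I would write $f=a+b$ with $\widehat a=a$ and $\widehat b=-b$, and realize each eigenfunction as a Laplace-type transform $r\mapsto\int\psi(it)\,e^{-\pi r^{2}t}\,\dt$ of a weakly holomorphic (quasi-)modular form $\psi$, integrated along a suitable contour in the upper half-plane. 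Because the Gaussian $e^{-\pi r^{2}t}$ has Fourier transform $t^{-4}e^{-\pi r^{2}/t}$ in dimension $8$, the transformation law of $\psi$ under $t\mapsto-1/t$ produces the Fourier eigenvalue, while a $\sin^{2}(\pi r^{2}/2)$ factor built into the construction creates the prescribed zeros. The genuinely delicate points are: the convergence of the contour integrals and the Schwartz decay, which rest on controlling the growth of $\psi$ at the cusps; and --- the real crux --- the two sign conditions $\widehat f\ge0$ on $\R^8$ and $f\le0$ on $\{|x|\ge\sqrt2\}$, which have to be forced by pinning down the sign of the modular integrand via explicit identities. This is precisely the construction carried out in \cite{Via17}, which I would invoke wholesale.

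\smallskip
\noindent\emph{Step 3: the equality case.} If $N/\mathrm{covol}(\Lambda)=1$, both inequalities in Step 1 are equalities. The first forces $f(w)=0$ for every nonzero $w\in\Xi-\Xi$; since $f$ vanishes only where $|x|^{2}\in2\Z_{\ge1}$, this gives $\langle w,w\rangle\in2\Z$ for all $w\in\Xi-\Xi$. The second forces $\widehat f(\xi)\,|\sum_{j}e^{2\pi i\langle\xi,v_j\rangle}|^{2}=0$ for every $\xi\in\Lambda^{*}\setminus\{0\}$, so the structure factor vanishes off the spheres $|\xi|^{2}\in2\Z$. A by-now-standard rigidity argument (as in \cite{CoE03}) then upgrades $\Xi$ to a single translate of a lattice; after recentering, $\Xi=L$ with $L=L-L\subseteq\{|x|^{2}\in2\Z\}$. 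Polarization gives $\langle v,w\rangle\in\Z$ for all $v,w\in L$, so $L$ is an even integral lattice with $L\subseteq L^{*}$; since $\mathrm{covol}(L)=1$ it is in fact unimodular, and its nonzero norms lie in $2\Z_{\ge1}$. By Mordell's uniqueness theorem \cite{Mor38}, the only such lattice in dimension $8$ is $E_8$, so $\Xi$ is congruent to $E_8$; conversely $E_8$ is unimodular with all nonzero norms in $2\Z_{\ge1}$, so every cross term above vanishes and the bound is attained, which closes the characterization.
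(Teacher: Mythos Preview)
The paper does not give its own proof of this theorem; it is stated as Viazovska's result and cited to \cite{Via17}, with the paper only recalling the Cohn--Elkies inequality \eqref{CohnElkies} and the relevant properties of the magic function $g_8$ in Section~\ref{sec:prelim}. Your proposal is a faithful outline of precisely that argument from \cite{Via17} and \cite{CoE03}, so in substance it agrees with what the paper invokes.

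One small imprecision worth tightening in your Step~3: the passage from ``all nonzero differences have squared norm in $2\Z_{\ge 1}$'' to ``$\Xi$ is a single translate of a lattice'' does not really need the dual (structure-factor) equality you highlight. The cleanest route is purely on the primal side: after translating so that $0\in\Xi$, polarization gives $\langle v,w\rangle\in\Z$ for all $v,w\in\Xi$, so the subgroup $L$ generated by $\Xi$ is an even integral lattice with $\lambda(L)\ge\sqrt2$; the bound just proved gives $1/\det L\le 1$, while $\Xi\subseteq L$ and $\Xi$ having center density $1$ forces $1\le 1/\det L$. Hence $\det L=1$ and $\Xi=L$, and Mordell's theorem finishes. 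Your invocation of a ``by-now-standard rigidity argument (as in \cite{CoE03})'' covers this, but the appeal to the vanishing structure factor is a detour.
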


Following the 8-dimensional contribution, the problem in dimension 24 was settled days later with similar techniques
by Cohn, Kumar, Miller, Radchenko, Viazovska \cite{CKMRV}. 

\begin{theorem}[Cohn, Kumar, Miller, Radchenko, Viazovska]
	\label{Leech}
	If $\Xi+B^{24}$ is a periodic packing in~$\R^{24}$, then its center density is at most $1$, with equality  if and only if $\Xi$ is congruent to the Leech lattice $\Lambda_{24}$.
\end{theorem}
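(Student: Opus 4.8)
The plan is to run the Cohn--Elkies linear programming scheme, whose only deep ingredient is a ``magic function'' adapted to dimension $24$. Concretely, I would first invoke --- as a black box, since this is the analytic heart of \cite{CKMRV} --- the existence of a radial Schwartz function $f\colon\R^{24}\to\R$, normalised so that $f(0)=\widehat f(0)=1$, with $\widehat f\ge 0$ on all of $\R^{24}$, with $f(x)\le 0$ whenever $|x|\ge 2$, and such that the only zeros of $f$ on $\{|x|\ge 2\}$ and of $\widehat f$ on $\R^{24}\setminus\{0\}$ occur at the radii $|x|=\sqrt{2m}$ with $m\in\Z_{\ge 2}$ (these are in fact double zeros, a feature not needed here but which is what makes the function amenable to the perturbative analysis in the stability problem). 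Such an $f$ is built by writing $f=f_++f_-$ with $\widehat{f_\pm}=\pm f_\pm$ and realising $f_\pm$ as Laplace transforms $\int_0^\infty\psi_\pm(t)\,\ee^{-\pi|x|^2 t}\,\d t$ of explicit weakly holomorphic (quasi)modular forms $\psi_\pm$ of the weights dictated by the dimension for a suitable congruence subgroup; the sign conditions and the exact locations of the zeros are then read off from the modular transformation behaviour of $\psi_\pm$.

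Granting $f$, I would obtain the density bound by Poisson summation. Writing the periodic packing as $\Xi=\bigcup_{j=1}^N(v_j+L)$ for a full-rank lattice $L\subset\R^{24}$ and representatives $v_1,\dots,v_N$, the center density equals $N/\operatorname{covol}(L)$ and the packing hypothesis says $|x+v_j-v_k|\ge 2$ whenever $(j,x)\ne(k,0)$. Poisson summation over $L$ then gives
\begin{equation*}
\frac{1}{N\operatorname{covol}(L)}\sum_{j,k=1}^{N}\ \sum_{x\in L} f(x+v_j-v_k)
\;=\;\frac{1}{N\operatorname{covol}(L)}\sum_{t\in L^{*}}\widehat f(t)\,\Bigl|\sum_{j=1}^{N}\ee^{2\pi\sqrt{-1}\,\langle t,v_j\rangle}\Bigr|^{2}.
\end{equation*}
On the left, the $N$ diagonal terms each contribute $f(0)$ and every other term is $\le 0$, so the left-hand side is at most $f(0)=1$; on the right, $\widehat f\ge 0$ and the $t=0$ term alone equals $N\widehat f(0)/\operatorname{covol}(L)=N/\operatorname{covol}(L)$, so the right-hand side is at least $N/\operatorname{covol}(L)$. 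Hence the center density is at most $1$.

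For the equality characterisation I would argue that center density $1$ forces every inequality above to be sharp, which yields two facts: (i) $f(x+v_j-v_k)=0$ whenever $x+v_j-v_k\ne 0$, so every nonzero difference of two centers has squared length in $2\Z_{\ge 2}$; and (ii) $\widehat f(t)\bigl|\sum_{j}\ee^{2\pi\sqrt{-1}\langle t,v_j\rangle}\bigr|^{2}=0$ for every $t\in L^{*}\setminus\{0\}$, so the exponential sum $\sum_{j}\ee^{2\pi\sqrt{-1}\langle t,v_j\rangle}$ vanishes for every $t\in L^*$ with $|t|^2\notin 2\Z_{\ge 2}$. Taking $j=k$ in (i) shows that $L$ is an even integral lattice of minimal norm $\ge 4$ with $\operatorname{covol}(L)=N$. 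The crux is then to promote ``periodic'' to ``lattice'': using (i) and (ii) together I would show that $\{v_1,\dots,v_N\}$ is a subgroup of $\R^{24}/L$, so that $\Xi$ is a translate of a lattice $\Lambda$. The mechanism is that (ii) annihilates the Fourier coefficients of $\Xi$ on all of $L^*$ outside the sparse set of radii $\sqrt{2m}$, confining the $v_j$ to a rational refinement of $L$, whereupon (i) turns ``closure under addition modulo $L$'' into a finite quadratic-form verification. Once $\Xi=\Lambda$ with $\operatorname{covol}(\Lambda)=1$, fact (i) applied to $\Lambda$ says $\Lambda$ is even of minimal norm $\ge 4$, hence integral, hence $\Lambda\subseteq\Lambda^{*}$; equality of covolumes then forces $\Lambda=\Lambda^{*}$, i.e.\ $\Lambda$ is unimodular. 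Since an even unimodular lattice of rank $24$ has minimal norm at most $4$, the minimal norm of $\Lambda$ is exactly $4$, and by the characterisation of $\Lambda_{24}$ recalled in the introduction $\Lambda$ is isometric to the Leech lattice, completing the proof.

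The two genuinely hard points are evident. The first is the construction of $f$ itself --- the modular-forms miracle of \cite{CKMRV} --- without which there is no bound at all; I would take it as given. The second, and the one really special to the \emph{periodic} (as opposed to purely lattice) setting, is the rigidity step: extracting from the simultaneous vanishing conditions (i)--(ii) that an extremal periodic packing cannot avoid being a lattice. Everything else --- the Poisson summation bookkeeping and the implication ``even unimodular of rank $24$ with minimal norm $4$ $\Rightarrow$ Leech'' --- is routine once those two are in hand.
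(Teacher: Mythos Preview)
The paper does not give its own proof of this theorem; it is quoted from \cite{CKMRV17} as background (Theorem~\ref{Leech}) and used as input for the stability results. Your outline is indeed the strategy of \cite{CKMRV17}: the Cohn--Elkies linear programming bound fed with the magic function $g_{24}$, whose existence and zero structure the present paper also takes as a black box (Section~\ref{sec:prelim}).

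Your density bound via Poisson summation is correct. The genuine gap is in the rigidity step. You propose to combine (i) and (ii), with (ii) supposedly ``confining the $v_j$ to a rational refinement of $L$'' after which (i) reduces closure under addition to ``a finite quadratic-form verification''. Neither of these claims is substantiated, and in fact (ii) is not needed at all. The clean argument uses only (i), via polarization: for any $\xi_0,\xi_1,\xi_2\in\Xi$,
\[
\langle \xi_1-\xi_0,\,\xi_2-\xi_0\rangle=\tfrac12\bigl(\|\xi_1-\xi_0\|^2+\|\xi_2-\xi_0\|^2-\|\xi_1-\xi_2\|^2\bigr)\in\Z,
\]
so the $\Z$-span $\Lambda$ of $\Xi-\xi_0$ has integral Gram matrix with even diagonal, hence is a genuine (discrete) even integral lattice. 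Integrality alone gives $\Lambda\subset\Lambda^*$ and thus $\operatorname{covol}(\Lambda)\ge 1$. On the other hand $L\subset\Xi-\xi_0\subset\Lambda$ and $\#\bigl((\Xi-\xi_0)/L\bigr)=N$ force $[\Lambda:L]\ge N$, hence $\operatorname{covol}(\Lambda)\le\operatorname{covol}(L)/N=1$. Therefore $\operatorname{covol}(\Lambda)=1$, $[\Lambda:L]=N$, and $\Xi-\xi_0=\Lambda$. Now $\Lambda$ is even unimodular with $\lambda(\Lambda)\ge 2$ directly from the packing condition, and the uniqueness of $\Lambda_{24}$ finishes. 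So the step you single out as the ``second hard point'' is in fact short once one observes that (i) by itself makes the difference set integral; the only deep input is the construction of $g_{24}$.
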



Regarding the stability question, we note that G. Fejes T\'oth \cite{FTG01} gave a stability version of the optimality of the hexagonal lattice packing of congruent circular disks in $\R^2$, see also Caglioti, Golse, Iacobelli \cite{Cag18}. The goal of this note is to prove explicit quantitative stability versions of Theorems~\ref{E8} and \ref{Leech}. We start by stating  a couple of results in the realm of lattice packings, in which case we have rather precise statements. Recall that for lattices $L \subset \R^n,$ the center density is $1/\det L$. We also denote by $\lambda(L)$ the length of the shortest non-zero vector in $L$. 

\begin{theorem}
\label{E8stab}
If $L$ is a lattice in $\R^8$ such that $\lambda(L)\geq \sqrt{2}$ and $\det L \le 1 + \varepsilon$  for $0<\varepsilon<\varepsilon_8$, then there exist $\Phi\in O(8)$,
a basis $w_1,\ldots,w_8$  of $E_8$ of length at most $2$ and a basis $u_1,\ldots,u_8$ of~$L$ 
such that
$$
\|u_i-\Phi w_i\|\leq c_8\varepsilon \mbox{ \ for $i=1,\ldots,8$}\,,
$$
where $\varepsilon_8, c_8 > 0$ are computable parameters. 
\end{theorem}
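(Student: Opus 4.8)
The plan is to combine the magic-function machinery of Viazovska's Theorem~\ref{E8} with an abstract stability statement for the lattice $E_8$ as an *isolated* point in the space of lattices. The starting observation is that the hypotheses $\lambda(L)\ge\sqrt 2$ and $\det L\le 1+\varepsilon$ say precisely that $L/\sqrt2 \cdot$ (suitably normalized) is $\varepsilon$-almost extremal in Viazovska's theorem. So the first step is quantitative: I would revisit the linear-programming proof of Theorem~\ref{E8} and extract, from the magic function $f$ (with $f(0)=\widehat f(0)>0$, $f\le 0$ outside $\sqrt2 B^8$, $\widehat f\ge 0$), a lower bound of the form
\[
\sum_{v\in L,\ v\neq 0} \bigl(-f(v)\bigr) + \bigl(1-\tfrac{1}{\det L}\bigr)\,c \;\le\; C\varepsilon
\]
using Poisson summation on $L$ and the fact that $\det L$ is close to $1$. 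The point is that each summand $-f(v)$ is nonnegative, so this forces every nonzero lattice vector to lie in a thin shell: either $\|v\|$ is within $O(\sqrt\varepsilon)$ (or $O(\varepsilon)$, depending on the vanishing order of $f$ at length $\sqrt2$) of a radius where $f$ vanishes, or $-f(v)$ is already $O(\varepsilon)$ and $v$ is "almost forbidden." Concretely I expect to show: the $240$ vectors of $L$ of length $\le \sqrt2+\delta$ form an approximate copy of the $E_8$ root system, and the next shell lies near length $2$, etc.

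The second step is an abstract rigidity statement: if a lattice $L$ has exactly $240$ vectors in a small neighborhood of the sphere of radius $\sqrt2$, arranged so that their pairwise inner products are within $O(\varepsilon)$ of those of the $E_8$ roots, then (for $\varepsilon$ small) $L$ is within $O(\varepsilon)$ of $\Phi(E_8)$ for some $\Phi\in O(8)$, with an explicit matching of bases. This is where the hypothesis "abstract stability of $E_8$" advertised in the abstract enters. The mechanism: pick a subset of the near-roots that is an honest basis (the root system $E_8$ contains bases of the lattice consisting of roots — e.g. a set of simple roots together with adjustments, all of length $2$ after rescaling), show the corresponding vectors in $L$ are linearly independent and have Gram matrix $O(\varepsilon)$-close to the $E_8$ Gram matrix, and then invoke that the map "Gram matrix $\mapsto$ lattice up to $O(n)$" is Lipschitz near a nondegenerate configuration — essentially because $E_8$ is determined by its Gram matrix and the relevant Jacobian (after removing the $O(8)$ gauge) is nonsingular. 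Taking the square root of an $\varepsilon$-perturbation of a fixed positive-definite Gram matrix is smooth, giving the basis $u_i$ with $\|u_i-\Phi w_i\|\le c_8\varepsilon$; the bound "length at most $2$" on the $w_i$ just records that we chose a root basis.

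The main obstacle I anticipate is the loss from $\varepsilon$ to $\sqrt\varepsilon$ in the naive argument: Poisson summation only controls $\sum_{v\ne0}(-f(v))$ linearly in $\varepsilon$, but if $f$ vanishes only to first order at $\|v\|=\sqrt2$ then a vector could sit at distance $\sim\sqrt\varepsilon$ from the sphere, which would give $\|u_i-\Phi w_i\|\lesssim\sqrt\varepsilon$, not $\varepsilon$. To recover the linear rate one needs that the magic function vanishes to order \emph{two} (a double zero) at the relevant radii — which is exactly the structural feature of Viazovska's $f$ coming from the modular-forms construction (the interpolation conditions prescribe value \emph{and} derivative). So a careful accounting of the vanishing orders of $f$ and $\widehat f$ at the "spectrum" $\{\sqrt{2k}\}$, and a matching treatment of both the primal sum over $L$ and the dual sum over $L^*$ (needed to pin down $\det L=1$ exactly in the limit, and to control the dual lattice's short vectors), is the delicate heart of the argument. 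A secondary technical point is ensuring the $240$ near-roots are exactly $240$ in number and not fewer (no collapsing) nor more (no spurious short vectors) — this follows from combining the upper bound on $\det L$ with the kissing-number rigidity, but must be made quantitative to fix $\varepsilon_8$.
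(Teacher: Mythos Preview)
Your overall strategy matches the paper's: Poisson summation on $L$ bounds $\sum_{v\neq 0}(-g_8(v))\le C\varepsilon$, this pins the near-minimal vectors to within $O(\varepsilon)$ of the sphere of radius $\sqrt{2}$, one then invokes the rigidity machinery of Cohn--Kumar \cite{CoK09} (exactly $240$ such vectors, reconstruction of the root system, etc.), and finishes with the smooth dependence of a basis on its Gram matrix (the paper's Lemma~\ref{closeintegral}).

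However, your analysis of the $\varepsilon$ versus $\sqrt{\varepsilon}$ issue is backwards, and this is precisely the crux. You write that a first-order zero of $f$ at $\sqrt{2}$ would only give $\sqrt{\varepsilon}$ control and that one needs a double zero to recover the linear rate. It is the opposite: Viazovska's $g_8$ has a \emph{simple} zero at $\sqrt{2}$ (and double zeros at $\sqrt{2k}$ only for $k\ge 2$), and it is the simple zero that delivers the linear bound. If $-g_8(r)\sim c(r-\sqrt{2})$ near $r=\sqrt{2}$ with $c>0$, then $-g_8(\|v\|)\le\varepsilon$ forces $\|v\|-\sqrt{2}\lesssim\varepsilon$ directly; a double zero would have meant $-g_8(r)\sim c(r-\sqrt{2})^2$ and hence only $\|v\|-\sqrt{2}\lesssim\sqrt{\varepsilon}$. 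This is exactly why the paper's weaker Proposition~\ref{WeakerStab} (which routes everything through Lemma~\ref{tkg8}, a uniform double-zero type estimate) lands at $\sqrt{\varepsilon}$, whereas the sharp Theorem~\ref{E8stab} isolates the simple zero at the minimal radius in its Step~1 to get $|\,\|x\|^2-2|\le C\varepsilon$ for the near-minimal vectors before feeding into the Cohn--Kumar argument. Your appeal to the interpolation conditions ``prescribing value and derivative'' is likewise off at the bottom of the spectrum: for $g_8$ only the value is forced to vanish at $\sqrt{2}$, not the derivative. If you had actually tried to verify a double zero at $\sqrt{2}$ you would have found there isn't one, and might then have wrongly concluded that the linear rate is unattainable.
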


\begin{theorem}
\label{Leechstab}
If $L$ is a lattice 
in $\R^{24}$ such that $\lambda(L)\geq 2$ and $\det L \le 1+\varepsilon$ for
$0<\varepsilon<\varepsilon_{24}$, then there exist $\Phi\in O(24)$,
basis $w_1,\ldots,w_{24}$ of $\Lambda_{24}$ of length at most $2^{140}$ and a basis 
$u_1,\ldots,u_{24}$ of $L$ such that
$$
\|u_i-\Phi w_i\|\leq c_{24}\varepsilon \mbox{ \ for $i=1,\ldots,24$}
$$
where $\varepsilon_{24}, c_{24} > 0$ are computable parameters. 
\end{theorem}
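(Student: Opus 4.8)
My plan is to reduce Theorem~\ref{Leechstab} to two ingredients: first, a purely metric/arithmetic statement that the Leech lattice $\Lambda_{24}$ is \emph{rigid} among lattices of covolume $1$ and minimal norm $\geq 2$ — i.e. any lattice close to satisfying these with equality is genuinely close to $\Lambda_{24}$ in the sense of bases — and second, the \emph{magic function} input from \cite{CKMRV} which forces a near-optimal lattice to have the right local structure. Concretely, I would proceed as follows. Let $L$ be as in the hypothesis, so $\det L \le 1+\varepsilon$ and $\lambda(L)\ge 2$. The Cohn--Elkies linear programming bound, applied with the magic function $g$ of \cite{CKMRV} (for which $\widehat g(0)=g(0)$ and $g(x)\le 0$ for $\|x\|\ge 2$), gives, after Poisson summation over $L$,
\[
0 \;\le\; \sum_{x\in L} g(x) \;=\; \frac{1}{\det L}\sum_{y\in L^*}\widehat g(y)\;,
\]
and since $g(x)\le 0$ for all nonzero $x\in L$ while the right side equals $\frac{1}{\det L}\big(\widehat g(0) + \sum_{y\ne 0}\widehat g(y)\big)$, a careful bookkeeping shows that $\det L \le 1+\varepsilon$ forces $\sum_{x\in L,\ x\ne 0} |g(x)|$ and $\sum_{y\in L^*,\ y\ne 0}\widehat g(y)$ to be $O(\varepsilon)$. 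The key point is that $g$ vanishes to order $2$ at the vectors of norm $2$ but is \emph{strictly negative} just outside, and $\widehat g$ is strictly positive away from its prescribed zeros; hence these smallness statements translate into quantitative control: every vector of $L$ of norm $<2+c\sqrt\varepsilon$ is within $O(\varepsilon)$ of a vector of norm exactly $2$ (after the matching), the number of such short vectors is at least $196560$ up to the $O(\varepsilon)$ error, and similarly for $L^*$.

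**From near-optimal configurations to a near-isometry.**

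Once we know that $L$ contains (roughly) $196560$ vectors of norm $\approx 2$ arranged $\varepsilon$-approximately like the minimal vectors of $\Lambda_{24}$, we invoke the abstract stability result for $\Lambda_{24}$ — this is the "results of independent interest on the abstract stability of the lattices $E_8$ and $\Lambda_{24}$" promised in the abstract. That result should say: a lattice whose configuration of short vectors is $\delta$-close (in the Gromov--Hausdorff / marked sense) to the minimal-vector configuration of $\Lambda_{24}$ is $C\delta$-close to $\Lambda_{24}$, i.e. there is $\Phi\in O(24)$ and a lattice basis with $\|u_i-\Phi w_i\|\le C\delta$. The mechanism is that the minimal vectors of $\Lambda_{24}$ \emph{rigidly determine} it: they span, they satisfy an overdetermined system of Gram-matrix constraints (all inner products lie in $\tfrac12\Z$ and the possible values are constrained), and this system is locally rigid, so small perturbations of the Gram data propagate to small perturbations of a generating set. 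Feeding $\delta = O(\sqrt\varepsilon)$ or, after sharpening the LP estimate using the \emph{double} zero of $g$ at norm $2$, $\delta = O(\varepsilon)$, yields the claimed $\|u_i - \Phi w_i\|\le c_{24}\varepsilon$; the enormous constant $2^{140}$ on the length of the $w_i$ is just the (crude) bound coming from expressing a reduced basis of $\Lambda_{24}$ in terms of a chosen set of minimal vectors and tracking the rigidity constant through this $24$-dimensional linear algebra.

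**The main obstacle.**

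The hard part will be the quantitative passage from "$\sum_{x\ne 0}|g(x)| = O(\varepsilon)$" to "the short vectors of $L$ form a set that is $O(\varepsilon)$-close to the minimal vectors of $\Lambda_{24}$, \emph{with the correct count}". Two difficulties intertwine here. One is that a priori $L$ could have many vectors of norm slightly above $2$ that contribute little to $\sum |g(x)|$ individually but whose \emph{number} is not controlled — one must use the strict negativity of $g$ on an annulus $2\le \|x\|\le 2+\eta$, together with a packing/volume bound (the Cohn--Elkies/LP inequality applied more carefully, or a direct counting via $\lambda(L)\ge 2$), to cap this count and pin each such vector near a genuine norm-$2$ vector. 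The second, more serious difficulty is the \emph{rigidity of $\Lambda_{24}$ itself}: one needs an effective (not merely qualitative) statement that its minimal-vector configuration has no nontrivial infinitesimal deformations preserving the constraints, with an explicit lower bound on the smallest singular value of the associated rigidity matrix — this is what makes $c_{24}$ and $\varepsilon_{24}$ computable, and verifying it genuinely uses special structure of the Leech lattice (its automorphism group $\mathrm{Co}_0$ acting on the $196560$ minimal vectors, the absence of "flexes"). I would expect the bulk of the technical work, and the source of the gigantic constants, to be exactly this effective rigidity analysis, carried out in the abstract-stability section and then plugged in here together with the explicit magic function $g$ from \cite{CKMRV}.
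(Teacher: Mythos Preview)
Your overall strategy matches the paper's: Poisson summation with the magic function $g_{24}$ to control the near-minimal vectors of $L$ to within $O(\varepsilon)$, then invoke the Cohn--Kumar machinery to conclude that $L$ is $O(\varepsilon)$-close to $\Lambda_{24}$. But there are two points worth correcting.

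First, you have the zero-order mechanism backwards. The function $g_{24}$ has a \emph{simple} zero at $r=2$ (and double zeros at $\sqrt{2k}$ for $k\ge 3$), not a double one. A simple zero $g_{24}(r)\sim -c(r-2)$ is precisely what turns $|g_{24}(x)|\le O(\varepsilon)$ into $\big|\,\|x\|-2\,\big|\le O(\varepsilon)$; a double zero there would only give $O(\sqrt{\varepsilon})$, which is the weaker Proposition~3.1 bound. So your ``sharpening via the double zero'' is the wrong way round --- the linear rate comes from the simple first root, and this is exactly the observation in Step~1 of the paper's proof outline.

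Second, the paper does not package the second half as a standalone ``abstract stability of $\Lambda_{24}$'' lemma of the kind you describe (rigidity matrix, smallest singular value, $\mathrm{Co}_0$-action, etc.). Instead it argues by directly tracking $\varepsilon$ through the existing proof of Cohn--Kumar \cite{CoK09}, Sections~4--8: one shows $L$ has exactly $196560$ nearly-minimal vectors (their Lemmas~4.2, 4.4), carries through their spherical-code and association-scheme analysis (Sections~5--7) with the now-arbitrary small $\varepsilon$, and arrives at bases $u_1,\dots,u_{24}$ of $L$ and $v_1,\dots,v_{24}$ of $\Lambda_{24}$ with $|\langle u_i,u_j\rangle-\langle v_i,v_j\rangle|=O(\varepsilon)$. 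The final step is then a generic Gram-matrix perturbation lemma (Lemma~\ref{closeintegral} in the paper, via Cholesky factorisation) converting $O(\varepsilon)$-closeness of Gram matrices into $O(\varepsilon)$-closeness of bases after an orthogonal transformation. So the ``bulk of the technical work'' you anticipate is not redone here --- it is imported from \cite{CoK09}.
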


Theorems \ref{E8stab} and \ref{Leechstab} follow by adapting the arguments from \cite{CoK09}, with the aid of some techniques from our present manuscript. We outline a proof of these results at the end of Section \ref{sec:ProofLattice}. 

Prior to that, we provide a proof of a weaker version of Theorems \ref{E8stab} and \ref{Leechstab}, with $\varepsilon^{1/2}$ in place of $\varepsilon$ on the right-hand side of the conclusion (see Proposition \ref{WeakerStab}). The reason for that is twofold: the first of them is that this argument, in spite of not obtaining the linear bound in terms of $\varepsilon,$ is significantly shorter than repeating the full argument required for the proof of Theorems \ref{E8stab} and \ref{Leechstab}. The other reason is that the techniques used for the proof of the weaker version are directly related to the proofs of the next results, devoted to proving stability in the setting of periodic packings. To handle that case, we need some more notions.

For compact sets $\Xi_1,\Xi_2\subset \R^n$, their \emph{Hausdorff distance} is
$$
d_H(\Xi_1,\Xi_2)=\min_{r\geq 0}\{\Xi_1\subset \Xi_2+r B^n\mbox{ and }\Xi_2\subset \Xi_1+r B^n\},
$$
which is a metric. We observe that if $\Xi_1,\Xi_2$ are finite such that
$\Xi_1+\varrho\,B^8$ and $\Xi_2+\varrho\,B^8$ are packings for $\varrho>0$ and 
$d_H(\Xi_1,\Xi_2)<\varrho$, then $\Xi_1$ and $\Xi_2$ have the same cardinality.

Let $\Pi$ be a measurable set of $ \R^n$ that is periodic with respect to a lattice $L$. Denote by $\tilde{V}$ the (induced) Lebesgue measure on $\R^n/L.$ Then we say that $\Pi$ holds for $x\in \R^n$ with probability  $p\in[0,1]$ if we have
$$
p=\frac{\tilde{V}(\Pi/L)}{\det L}=
\lim_{r\to \infty}\frac{V(\{x\in rK:\,\mbox{$x \in$ $\Pi$}\})}{V(rK)}
$$
for any convex body $K$. We observe that, since $\Pi$ is $L-$periodic, we obtain the same probability starting from any other lattice $L'\subset L$. 

\begin{theorem} 
\label{PeriodicPacking8stab0}
There exist explicitly computable values $\tilde{\varepsilon}_8, \tilde{c}_8 >0$ such that for 
$\varepsilon\in(0,\tilde{\varepsilon}_8)$ and
$R_\varepsilon = \frac{|\log \varepsilon|}{\log|\log \varepsilon|}$, the following properties hold:

If a periodic packing $\Xi+\frac{\sqrt{2}}2\,B^8$ of balls  has center density at least $1-\varepsilon$ for 
 $\varepsilon\in(0,\tilde{\varepsilon}_8)$, and $K$ is a centered convex body containing a ball of radius $R_\varepsilon$ and
having diameter ${\rm diam}\,K\leq 2^{20}R_\varepsilon$, then 
with probability at least $1-\tilde{c}_8R_\varepsilon^{-\frac{1}2}$, $x\in\R^8$ satisfies that 
$\# (\Xi\cap (x+K))\geq (1-\tilde{c}_8R_\varepsilon^{-\frac{1}2})V(K)$ and
$$
d_H\Big(\Phi Z,\Xi\cap (x+K)\Big)\leq \tilde{c}_8 e^{\frac{1}{\tilde{c}_{8}} R_{\varepsilon}} \varepsilon^{\frac{1}{4}}
\mbox{ \ \ where $Z\subset E_8$ and $\Phi$ is an isometry of $\R^8$.}
$$ 
Moreover, the set $Z$ has small gaps when compared to a localized version of $E_8,$ in the sense that 
\[
\left|\frac{\#(\Phi Z)}{\#((\Phi E_8) \cap (x+K))} - 1 \right| \le \frac{C_8}{\sqrt{R_{\varepsilon}}},
\]
where $C_8 > 0$ is an absolute computable constant. 
\end{theorem}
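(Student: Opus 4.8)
The strategy is to combine the Cohn--Elkies linear programming bound with Viazovska's magic function to pass from the density hypothesis to a pointwise ``almost optimality'' statement, and then to bootstrap that into a local geometric comparison with $E_8$. First I would fix the magic function $f$ from \cite{Via17}, normalized so that $\widehat{f}\geq 0$, $f(x)\leq 0$ for $\|x\|\geq \sqrt 2$, and $f(0)=\widehat f(0)$, and apply the Poisson-type summation argument to the periodic packing $\Xi+\tfrac{\sqrt2}{2}B^8$ relative to its period lattice $L$. The density hypothesis (center density $\geq 1-\varepsilon$) forces the slack in the Cohn--Elkies inequality to be $O(\varepsilon)$; concretely, writing $\Xi/L=\{y_1,\dots,y_N\}$, one gets
\[
\sum_{v\in L}\sum_{i,j} f(y_i-y_j+v) \;=\; \frac{1}{\det L}\sum_{w\in L^*}\Bigl|\sum_i e^{2\pi i \langle w,y_i\rangle}\Bigr|^2 \widehat f(w)\;\geq\; 0,
\]
while the left side is $N^2 f(0)$ plus the contributions of nonzero differences, all of which have $\|y_i-y_j+v\|\geq \sqrt2$ and hence $f\leq 0$ there. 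Quantitatively this yields that the ``defect measure'' $\mu=\sum_{\ell\neq 0}(-f(\ell))\,\delta_\ell$ summed over all nonzero differences is $O(N\varepsilon)$, so on average a typical translate $x+K$ of a large convex body sees a point configuration whose pairwise differences almost never land where $f<0$ strictly, and whose count is $(1+O(\sqrt{R_\varepsilon^{-1}}))V(K)$ --- this is where the probabilistic statement and the near-equality $\#(\Xi\cap(x+K))\geq(1-\tilde c_8 R_\varepsilon^{-1/2})V(K)$ come from, via a Markov/averaging argument over the torus $\R^8/L$ with $K$ of size $R_\varepsilon$.

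The next step is the abstract stability input: a finite subset $Z\subset\R^8$ of density close to $E_8$'s all of whose pairwise differences avoid (up to small defect) the region where the magic function is negative must be Hausdorff-close to a piece of a rotated copy of $E_8$. This is precisely the ``abstract stability of $E_8$'' advertised in the abstract, and it should already be available from an earlier section of the paper; I would invoke it as a black box, feeding in the defect bound $O(R_\varepsilon\varepsilon)$ on the configuration inside $x+K$ (the extra $R_\varepsilon$ loss coming from restricting the global $O(N\varepsilon)$ defect to a window of volume $\sim R_\varepsilon^8$ and then to a positive-probability set of centers $x$). Tracking constants through that stability theorem produces the Hausdorff bound of the shape $\tilde c_8\, e^{R_\varepsilon/\tilde c_8}\,\varepsilon^{1/4}$: the stretched-exponential prefactor $e^{R_\varepsilon/\tilde c_8}$ is the price of the iterated/inductive nature of the lattice-reconstruction argument (each of the finitely many ``layers'' of the ball $B^8\cap(x+K)$ contributes a multiplicative constant, and there are $\sim R_\varepsilon$ layers), while the $\varepsilon^{1/4}$ rather than $\varepsilon^{1/2}$ reflects the two successive square-root losses --- one from the averaging/Markov step, one from the geometric stability estimate. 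Choosing $R_\varepsilon=|\log\varepsilon|/\log|\log\varepsilon|$ is exactly the balance that makes $e^{R_\varepsilon/\tilde c_8}\varepsilon^{1/4}\to 0$.

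Finally, the ``small gaps'' statement comparing $\#(\Phi Z)$ with $\#((\Phi E_8)\cap(x+K))$ follows by combining the count estimate $\#Z=\#(\Xi\cap(x+K))=(1+O(R_\varepsilon^{-1/2}))V(K)$ with the standard lattice-point-counting asymptotic $\#(E_8\cap(x+K))=(1+O(R_\varepsilon^{-1}))V(K)$ for $K$ of inradius $R_\varepsilon$ and bounded eccentricity $\mathrm{diam}\,K\leq 2^{20}R_\varepsilon$ (here the $O(R_\varepsilon^{-1})$ is the surface-to-volume ratio, using $\det E_8=1$); the ratio is then $1+O(R_\varepsilon^{-1/2})$, which is the claimed $C_8/\sqrt{R_\varepsilon}$. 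The main obstacle I anticipate is the middle step: carefully propagating the quantitative defect bound through the abstract $E_8$-stability theorem while keeping every constant explicit and computable, and in particular verifying that the window size $R_\varepsilon$ is large enough (relative to the covering radius of $E_8$ and the support considerations of $f$) that the reconstructed copy of $E_8$ is globally consistent across the whole frame $K$ rather than only locally --- this is what forces the specific interplay between $R_\varepsilon$, the eccentricity bound $2^{20}$, and the exponential constant $\tilde c_8$.
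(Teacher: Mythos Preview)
Your high-level plan (Cohn--Elkies slack $\Rightarrow$ most differences lie near zeros of $g_8$ $\Rightarrow$ averaging/Markov to localize to a good window $\Rightarrow$ reconstruct $E_8$) matches the paper's architecture, and your treatment of the counting estimate at the end is correct. But there is a genuine gap in the middle, and a misdiagnosis of where the exponential loss comes from.

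\textbf{The missing step.} You write that ``a finite subset $Z\subset\R^8$ of density close to $E_8$'s all of whose pairwise differences avoid (up to small defect) the region where the magic function is negative must be Hausdorff-close to a piece of a rotated copy of $E_8$'' and that this ``should already be available from an earlier section of the paper'' as a black box. It is not. The earlier stability results (Propositions~\ref{E8unimodstab} and~\ref{Leechunimodstab}) apply only to \emph{lattices} whose short vectors have near-even squared lengths; they say nothing about arbitrary finite point sets. The heart of the paper's proof is precisely the construction that bridges this gap: after saturating the packing, one builds by hand vectors $v_1,\dots,v_8\in S'$ with controlled lengths and determinant (using saturatedness to find points near prescribed orthogonal directions), defines the dual lattice $L_0=\{u:\langle u,v_i\rangle\in\Z\}$, and shows via \eqref{scalarprodalpha8} that every point of $S'$ is $O(\alpha_8(R)\varepsilon^{1/4})$-close to some $u\in L_0$. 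One then passes to the sublattice $L\subset L_0$ generated by these approximants, takes an LLL basis $w_1,\dots,w_8$, and---this is the delicate part---shows each $w_i$ can be written as a bounded-length signed sum of elements of $\bar S$ (via a pigeonhole argument in the finite group $L/\widetilde L$), which is what forces $\langle w_i,w_j\rangle$ to be nearly integral with even diagonal. Only then does Lemma~\ref{closeintegral} produce an even unimodular lattice, hence a copy of $E_8$. None of this is packaged elsewhere; it is the actual content of Steps~2--3 of the proof, and your plan skips it entirely.

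\textbf{The source of the exponential.} The prefactor $e^{R_\varepsilon/\tilde c_8}$ does not come from ``$\sim R_\varepsilon$ layers each contributing a multiplicative constant.'' It comes from Lemma~\ref{tkg8}: to invert the relation $|g_8(t)|\gtrsim |t^2-2k|^2$ on the interval $[\sqrt2,R]$ one must divide by a lower bound for $|g_8|$ away from its zeros, and since $g_8$ decays like $e^{-c r}$, this lower bound is of order $e^{-cR}$, giving $\alpha_8(R)\sim R^{3/2}e^{5\pi R/4}$. The second square-root loss is likewise not a generic ``geometric stability estimate'' but specifically the fact that $g_8$ has \emph{double} zeros at $\sqrt{2k}$ for $k\ge 2$, so $|t^2-2k|\le\alpha_8(R)\sqrt{|g_8(t)|}\le\alpha_8(R)\varepsilon^{1/4}$ once $|g_8|\le\sqrt\varepsilon$.
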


\begin{theorem} 
	\label{PeriodicPacking24stab0}
There exist explicitly computable values $\tilde{\varepsilon}_{24}, \tilde{c}_{24} >0$ such that for 
	$\varepsilon\in(0,\tilde{\varepsilon}_{24})$ and $R_{\varepsilon} = \frac{|\log \varepsilon|}{\log |\log \varepsilon|}$, the following properties hold.
	
	If a periodic packing $\Xi+B^{24}$ of balls  has center density at least $1-\varepsilon$ for 
	$\varepsilon\in(0,\tilde{\varepsilon}_{24})$, and $K$ is a centered convex body containing a ball of radius $R_\varepsilon$ and
	having diameter ${\rm diam}\,K\leq 2^{140}R_\varepsilon$, then 
	with probability at least $1-\tilde{c}_{24}R_\varepsilon^{-\frac{1}2}$, an $x\in\R^{24}$ satisfies that 
	$\# (\Xi\cap (x+K))\geq (1-\tilde{c}_{24}R_\varepsilon^{-\frac{1}2})V(K)$ and
	$$
	d_H\Big(\Phi Z,\Xi\cap (x+K)\Big)\leq 
\tilde{c}_{24}  e^{\frac{1}{\tilde{c}_{24}} R_{\varepsilon}} \varepsilon^{\frac{1}{4}}
	\mbox{ where $Z\subset \Lambda_{24}$ and $\Phi$ is an isometry of $\R^{24}$. }
	$$ 
 Moreover, the set $Z$ has small gaps when compared to a localized version of $\Lambda_{24},$ in the sense that 
	\[
	\left|\frac{\#(\Phi Z)}{\#((\Phi \Lambda_{24}) \cap (x+K))} - 1 \right| \le \frac{C_{24}}{\sqrt{R_{\varepsilon}}},
	\]
	where $C_{24} > 0$ is an absolute computable constant. 
\end{theorem}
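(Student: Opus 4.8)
The plan is to run the linear‑programming/summation method with the magic function of \cite{CKMRV17}, show that near‑optimality forces an \emph{averaged} local rigidity of the packing, and then upgrade this to a statement about a typical frame by a Markov‑type argument, feeding the outcome into an abstract stability theorem for the Leech lattice. Throughout, normalize $\Lambda_{24}$ to have minimal vector $2$ and $\det\Lambda_{24}=1$, and write the periodic packing as $\Xi=\bigsqcup_{i=1}^{N}(t_i+L)$, so its center density is $\delta=N/\det L\ge 1-\varepsilon$. \emph{Step 1 (squeezing the summation identity).} Let $g$ be the magic function of \cite{CKMRV17}: $g$ is admissible, $g(v)\le 0$ for $|v|\ge 2$, $\hat g\ge 0$, $g(0)=\hat g(0)>0$, and $g,\hat g$ vanish to second order at each length $|v|^{2}\in\{4,6,8,\dots\}$, i.e.\ at the vector lengths of $\Lambda_{24}$. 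Poisson summation over $L$ gives
\[
\sum_{i,j=1}^{N}\sum_{\ell\in L} g(t_i-t_j+\ell)=\frac{1}{\det L}\sum_{y\in L^{*}}\hat g(y)\,\Bigl|\sum_{i=1}^{N}e^{2\pi i\langle y,t_i\rangle}\Bigr|^{2}.
\]
The left side is at most $Ng(0)$, since every non‑diagonal summand has $|t_i-t_j+\ell|\ge 2$ by the packing property and $g\le 0$ there; the right side is at least its $y=0$ term $\delta N g(0)$, the remaining terms being nonnegative. Hence both the space‑side defect $\mathcal D_{\mathrm{sp}}=\sum_{(i,j,\ell)\ne(i,i,0)}(-g(t_i-t_j+\ell))$ and the Fourier‑side defect $\mathcal D_{\mathrm F}=\frac1{\det L}\sum_{y\ne0}\hat g(y)\,\bigl|\sum_i e^{2\pi i\langle y,t_i\rangle}\bigr|^{2}$ are at most $N\varepsilon g(0)$. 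Since $-g$ is bounded below by a positive constant off any neighborhood of the special lengths and the packing property caps the number of centers inside a thin annulus around a given center, a per‑center version of $\mathcal D_{\mathrm{sp}}\le N\varepsilon g(0)$ says that \emph{on average over the $t_i$} every neighbour of a center lies within $O(\varepsilon^{1/2})$ of a $\Lambda_{24}$‑length, with the correct local multiplicities, while $\mathcal D_{\mathrm F}\le N\varepsilon g(0)$ forces the spectral measure on $L^{*}$ to concentrate, up to small mass, on the dual shells — this is what certifies that the local pattern is genuinely crystalline rather than merely metrically constrained.

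\emph{Step 2 (from average to a typical frame).} For $t\in\Xi$ let $\eta(t)$ be its contribution to $\mathcal D_{\mathrm{sp}}$ together with a comparable term recording its contribution to $\mathcal D_{\mathrm F}$; by Step 1, $\frac1N\sum_{t\in\Xi/L}\eta(t)=O(\varepsilon)$. Choosing $x\in\R^{24}/L$ uniformly, the expected fraction of the points of $\Xi\cap(x+K)$ with $\eta(t)>\theta$ equals the overall fraction, which is $O(\varepsilon/\theta)$. Taking $\theta\sim\varepsilon R_\varepsilon$ and applying Markov's inequality, with probability at least $1-O(R_\varepsilon^{-1/2})$ the frame $x+K$ contains a subset $Z'$ — all but an $O(R_\varepsilon^{-1/2})$‑fraction of its points — each point of which has $\eta(t)\le\theta$, hence all realized pairwise distances within $Z'$ lie within $O((\varepsilon R_\varepsilon)^{1/2})$ of a $\Lambda_{24}$‑length with the correct local multiplicities, and the spectral data are correspondingly close to those of a lattice. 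Since the discarded points are a small fraction and $\delta\ge 1-\varepsilon$, we also obtain $\#(\Xi\cap(x+K))\ge(1-O(R_\varepsilon^{-1/2}))V(K)$.

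\emph{Step 3 (abstract stability of $\Lambda_{24}$, with propagation, and the gap).} The key separate input — the promised result of independent interest — has the shape: if $Z'\subset\R^{24}$ lies in a ball of radius $R$, all its realized pairwise distances are within $\delta$ of lengths of $\Lambda_{24}$, its local configurations match the Leech kissing arrangement up to $\delta$, and its spectral behaviour is correspondingly close to that of a lattice, then there exist an isometry $\Phi$ of $\R^{24}$ and $Z\subset\Lambda_{24}$ with $d_H(Z',\Phi Z)\le C^{R}\sqrt{\delta}$. The extra square root is exactly the phenomenon already visible in Proposition~\ref{WeakerStab} (one further square root than the linear bound of Theorem~\ref{Leechstab}): a $\delta$‑perturbation of the rigid $196{,}560$‑point kissing configuration of $\Lambda_{24}$ is only $O(\sqrt\delta)$‑close to the exact one because optimality of that configuration kills the first‑order term. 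The factor $C^{R}$ records the cost of gluing the local identifications into one global $\Phi$ along a chain across the frame, each link enlarging the committed error by a bounded factor. Feeding in $\delta\sim(\varepsilon R_\varepsilon)^{1/2}$, $R\le 2^{140}R_\varepsilon$, and absorbing powers of $R_\varepsilon$ into the exponential yields
\[
d_H\bigl(\Phi Z,\Xi\cap(x+K)\bigr)\le d_H(\Phi Z,Z')+O(R_\varepsilon^{-1/2})\le \tilde c_{24}\,e^{\frac1{\tilde c_{24}}R_\varepsilon}\,\varepsilon^{1/4},
\]
which tends to $0$ precisely because $R_\varepsilon=|\log\varepsilon|/\log|\log\varepsilon|$ makes $e^{cR_\varepsilon}=\varepsilon^{-o(1)}$. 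As this Hausdorff distance is below the minimal distance, $\#(\Phi Z)=\#(\Xi\cap(x+K))\ge(1-O(R_\varepsilon^{-1/2}))V(K)$, while counting points of the unit‑density lattice $\Phi\Lambda_{24}$ in the convex body $x+K$ of inradius $\ge R_\varepsilon$ gives $\#((\Phi\Lambda_{24})\cap(x+K))=(1+O(R_\varepsilon^{-1}))V(K)$ by the surface‑to‑volume estimate; dividing yields $\bigl|\#(\Phi Z)/\#((\Phi\Lambda_{24})\cap(x+K))-1\bigr|\le C_{24}R_\varepsilon^{-1/2}$. The proof of Theorem~\ref{PeriodicPacking8stab0} is identical, using the magic function of \cite{Via17} and the abstract stability of $E_8$, the smaller diameter exponent $2^{20}$ reflecting the smaller constants available there.

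\emph{Main obstacle.} The heart of the matter is Step 3: proving the abstract quantitative rigidity of $\Lambda_{24}$ (and $E_8$) with exactly the square‑root loss and with an \emph{explicit, controlled} growth $C^{R}$ of the error constant in the frame radius. This requires a careful second‑order analysis of approximate kissing configurations together with a bookkeeping of how the local‑to‑global assembly propagates errors; the precise choice of $R_\varepsilon$ is dictated entirely by this exponential factor, which is why one cannot hope to replace the frame by all of $\R^{n}$.
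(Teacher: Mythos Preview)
Your high-level architecture --- Poisson summation squeeze, Markov to locate good frames, then a rigidity step --- matches the paper's. But your Step~3 is where the real content lives, and there your proposal diverges from the paper's proof and leaves a genuine gap.

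The paper does \emph{not} invoke an abstract stability theorem of the shape you describe, nor does it analyze kissing configurations or propagate local isometries along chains. After polarizing the distance condition to get $\langle p_i-p_3,p_j-p_3\rangle$ close to an integer for all triples in the good frame $S'$, it proceeds constructively: choose by saturation a nearly-orthogonal system $v_1,\dots,v_{24}\in S'$ near the origin, form the dual lattice $L_0=\{u:\langle u,v_i\rangle\in\Z\ \forall i\}$, and observe that every point of $S'$ is within $O(\alpha_{24}(R)\varepsilon^{1/4})$ of some point of $L_0$. Let $\bar S\subset L_0$ be these nearby lattice points and $L\subset L_0$ the sublattice they generate. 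The combinatorial core is then to show that an LLL basis $w_1,\dots,w_{24}$ of $L$ has $\langle w_i,w_j\rangle$ close to integers (even integers on the diagonal); this is done by bounding $\#(L/\tilde L)$ for a known sublattice $\tilde L$, then using a doubling argument in that finite abelian group to write each $w_i$ as a sum of at most $2^{\#(L/\tilde L)}\cdot O(R)$ signed elements of $\bar S$. Lemma~\ref{closeintegral} (Cholesky perturbation) then produces a nearby even unimodular lattice $\Lambda_{(a)}$. Finally --- and this is a step with no analogue in dimension~8 --- one must rule out the other 23 Niemeier lattices by proving $\lambda(\Lambda_{(a)})\ge 2$; the paper does this with a separate counting argument inside the frame, showing that a hypothetical $\sqrt 2$-vector would force too many points of $\Lambda_{(a)}\cap(a+K)$ to be missing from $X_{(a)}$.

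Two further corrections. First, you misattribute the exponential factor: in the paper, $e^{cR_\varepsilon}$ comes entirely from Lemma~\ref{tkg8}, because $g_{24}$ decays exponentially and so inverting $|g_{24}(t)|\le\sqrt\varepsilon$ on the range $t\le R$ costs a factor $\alpha_{24}(R)\sim R^{11/2}e^{cR}$; the lattice construction afterwards contributes only polynomial factors in $R$. Your $C^R$ from ``gluing along a chain'' is not where the exponential lives, and your Step~2 bound ``within $O((\varepsilon R_\varepsilon)^{1/2})$ of a $\Lambda_{24}$-length'' omits exactly this $\alpha_{24}(R)$ factor. Second, the Fourier-side defect $\mathcal D_{\mathrm F}$ and the spectral concentration you describe in Step~1 play no role whatsoever in the paper's argument; only the space-side defect is used.
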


\begin{remark} Observe that, in spite of the perhaps cumbersome notation, in both Theorems \ref{PeriodicPacking8stab0} and \ref{PeriodicPacking24stab0} we have that $d_H\Big(\Phi Z,\Xi\cap (x+K)\Big)\lesssim \varepsilon^{\frac{1}{4}-o(1)}$ as $\varepsilon$ goes to zero. It is an interesting question whether this may be improved to $d_H\Big(\Phi Z,\Xi\cap (x+K)\Big)\lesssim \varepsilon^{\frac{1}{4}}$ for all $\varepsilon > 0.$ 
\end{remark}

Paraphrasing, these results tell us that up to adjusting the frame to convex sets of diameter $\sim R_{\varepsilon},$ an almost optimal packing `almost always' looks like a lattice packing, barring removing a small set of centers. The choice of $R_{\varepsilon}$ here was taken so that the bound on the Hausdorff distance is smaller than any power $\varepsilon^{\alpha}, \, \alpha < \frac{1}{4}.$ Such a choice, as one will see from the proof, is not rigid: one could either further adjust the frame by `zooming in' in order to obtain a slightly better estimate on such a Hausdorff dimension, or `zoom out' in order to have a stability result at a higher scale, albeit with less precision. 

In order to prove these results, we employ several mechanisms: the first is, as in the lattice packing case, the use of `almost vanishing' of the magic functions $g_8$ and $g_{24}.$ This gives us, implicitly, that most differences of points in the periodic configuration are close to zeros of the magic function. The differences which are not are relatively few, and these give rise to a small set of `bad' points, which is the one which we discard, as stated in the proofs of Theorems \ref{PeriodicPacking8stab0} and \ref{PeriodicPacking24stab0}. For the remaining `good' points, the proximity to zeros of the magic function imposes rigidity conditions, just as in the lattice case, which allow us to explicitly construct a (rotated) basis of $E_8$ and $\Lambda_{24}$ close to a basis of the original lattice, after a suitable translation.

One readily notices the differences between Theorems \ref{E8stab}, \ref{Leechstab} and Theorems \ref{PeriodicPacking8stab0}, \ref{PeriodicPacking24stab0}. In particular, one may wonder whether the weaker nature in the latter results cannot be dropped. In the next example we show that one can't expect a more precise statement about almost optimal periodic packings in $\R^8$ than Theorem~\ref{PeriodicPacking8stab0}, and $R_\varepsilon$ is at most of order $1/\varepsilon$. An entirely analogous construction is possible in dimension $24.$ 

\begin{example}\label{example optimal} 
Let $\Psi\in O(8)$ be such that $E_8\cap \Psi E_8=\{0\}$.
For any large positive integer $R$, we consider $\varepsilon=\frac1{R}$, $\Lambda=R^2\Z^8$, and let
$$
\{v_1,\ldots,v_{R^8}\}=[0,R-1]^8\cap \Z^8.
$$
For $i=1,\ldots,R^8$, let
$$
W_i=Rv_i+[0,R]^8.
$$
We construct the packing $\Xi_i+\frac{\sqrt{2}}2\,B^8\subset W_i$ if $R^7<i\leq R^8$. First,
if $R^7<i\leq R^8$, then let
$$
\Xi'_i=\left\{\begin{array}{l}
	\left\{x:x+\frac{\sqrt{2}}2\,B^8\subset W_i\mbox{ and }x-Rv_i\in E_8\right\}
	\mbox{ if the sum of coordinates of $v_i$ is even}\,,\\[1ex]
	\left\{x:x+\frac{\sqrt{2}}2\,B^8\subset W_i\mbox{ and }x-Rv_i\in \Psi E_8\right\}
	\mbox{ if the sum of coordinates of $v_i$ is odd}\,,
\end{array}\right.
$$
and let $\Xi_i$ be obtained from $\Xi'_i$ by deleting arbitrary $R^7$ points from $\Xi'_i$.
Our packing is $\Xi+\frac{\sqrt{2}}2\,B^8$, where
$$
\Xi=\Lambda+\bigcup_{R^7<i\leq R^8}\Xi_i.
$$
Then the center density of the periodic packing is at least $1-O(\varepsilon)$, the packing contains no patch of size larger then $R=1/\varepsilon$ that can be reasonably well approximated by a part of the sphere packing by $E_8$, and the packing may contain holes of size $R^{7}$.
\end{example}

The rest of the manuscript is organized as follows. In Section~\ref{sec:prelim}, we collect some preliminary results, ranging from properties of lattices to claims on the functions $g_8$ and $g_{24}.$ In Section \ref{sec:ProofLattice}, we prove Theorems~\ref{E8stab} and~\ref{Leechstab} on stability for lattice packings, and in Section~\ref{sec:Periodic} we prove Theorems~\ref{PeriodicPacking8stab0} and~\ref{PeriodicPacking24stab0}. Finally, Section~\ref{secbin} deals with natural generalizations of the main results to the context of bin packings and general packings. 

\section{Preliminaries}\label{sec:prelim} 

\subsection{Definitions and properties of packings} 
For $r>0$, $\Xi\subset \R^n$ and $D=rB^n$,
we recall that $\Xi+D$ is a \emph{sphere packing} if ${\rm int}\,(x+D)\cap {\rm int}\,(y+D)=\emptyset$ for different $x,y\in \Xi$,
and the upper density of the packing is defined as
$$
\delta_{\rm upp}(\Xi,rB^n) = \limsup_{R \to \infty} \frac{V((z+R\,B^n) \cap (\Xi +D))}{V(R\,B^n)}
$$
for any fixed $z\in\R^n$. A packing is called \emph{periodic} if $\Xi$ is invariant under a lattice $\Lambda$, and hence if~$\Xi$ represents $N$ cosets with respect to $\Lambda$, then one has
$$
\delta_{\rm upp}(\Xi,D) = \lim_{R \to \infty} \frac{V(R\,B^n \cap (\Xi +D))}{V(R\,B^n)}=\frac{N\cdot V(D)}{\det\Lambda}.
$$ 
In the case when $\Xi=\Lambda$, thus $N=1$, we call the packing a \emph{lattice packing}. The classical paking density $\delta(B^n)$ is thus defined as the supremum of $\delta_{\rm upp}(\Xi,B^n)$ over all packings 
$\Xi+B^n$, which equals the supremum of $\delta_{\rm upp}(\Xi,rB^n)$ over all packings 
$\Xi+rB^n$ for any fixed $r>0$. According to Groemer \cite{Gro63}, we may define $\delta(B^n)$ to be the supremum of the densities over all \emph{periodic} packings, and in fact, there exists a (possibly non-periodic) packing $\Xi_0+B^n$ such that for any convex body $K$ (compact convex set with non-empty interior), we have
$$
\delta(B^n)=\delta_{\rm upp}(\Xi_0,B^n) = \lim_{R \to \infty} \frac{V(R\,K \cap (\Xi_0 +B^n))}{V(R\,K)}.
$$ 

Let $D$ be a Euclidean ball in $\R^n$. For a periodic packing $\Xi+D$, it is sometimes more convenient to work with the center density
$$
\Delta(\Xi,D) =\frac{\delta(\Xi,D)}{V(D)} = \lim_{R \to \infty} \frac{\#(\Xi\cap RB^n)}{V(R\,B^n)}=\frac{N}{\det\Lambda},
$$
where $\Xi$ is periodic with respect to the lattice $\Lambda$ and $\Xi$ represents $N$ cosets with respect to $\Lambda$. Similarly, for a general packing $\Xi+D$, the upper center density is defined as
$$
\Delta_{\rm upp}(\Xi,D)=\frac{\delta_{\rm upp}(\Xi,D)}{V(D)}= 
\limsup_{R \to \infty} \frac{\#(\Xi\cap RB^n)}{V(R\,B^n)}.
$$

A cornerstone concept used to study sphere packings is that of a \emph{lattice}, which we define to be a discrete subgroup $\Lambda$ of $\R^n$ whose $\R$-linear span is $\R^n$. In this case, there exist $u_1,\ldots,u_n$ such that $\Lambda=\sum_{i=1}^n\Z u_i$, and any such $n$-tuple of vectors in $\Lambda$ is called a basis of~$\Lambda$. In addition, $\det \Lambda$ is the common absolute value of the determinant $\det[u_1,\ldots,u_n]$ for all bases of $\Lambda$.

For a lattice $\Lambda$ in $\R^n$, we use $\lambda(\Lambda)$ to denote the minimal length of non-zero vectors. That is, 
\[
\lambda(\Lambda) = \min\{ \|v\| \colon v \in \Lambda\backslash\{o\} \}.
\]
Let $\Lambda^*=\{z\in\R^n:\,\langle z,x\rangle\in\Z\mbox{ for }x\in \Lambda\}$ denote the dual lattice. For the dual lattice, we have $\det \Lambda^*=1/\det \Lambda$.

We note that if $\Lambda$ is a lattice in $\R^n$, then $\Lambda+\frac{\lambda(\Lambda)}2\,B^n$ is a packing where 
$B^n$ is the unit Euclidean ball centered at the origin.
We call a lattice $\Lambda$ \emph{unimodular} if $\det \Lambda=1$ and \emph{even}  if $\|x\|^2\in 2\Z$ for any $x\in \Lambda$. If $\Lambda$ is even and unimodular, then
\begin{equation}
	\label{xyxxyy}
	\langle x,y\rangle=\frac{\|x+y\|^2-\|x\|^2-\|y\|^2}2\in\Z
\end{equation}
for any $x,y\in \Lambda$; therefore, the dual lattice $\Lambda^*$ coincides with $\Lambda$. 

It is known that a lattice $\Lambda$ in $\R^n$ possesses a so-called Lenstra-Lenstra-Lov\'asz-reduced (or LLL-reduced) basis $u_1,\ldots,u_n$ such that
the product of lengths is at most a constant times $\det \Lambda$; in other words, there is a basis $u_1,\dots,u_n$ of $\Lambda$ such that 
\begin{equation}
	\label{LLL}
	\|u_1\|\cdot\ldots\cdot\|u_n\|\leq 2^{\frac{n(n-1)}4}\det \Lambda.
\end{equation}
We will simply call such bases \emph{LLL bases}. 

\subsection{Properties of the functions $g_8$ and $g_{24}$} We recall first the following core idea in the proof of Theorem~3.1 in Cohn, Elkies~\cite[p.~694]{CoE03}, which will be of utter importance in our proofs: 
\begin{prop}
If $\Lambda+S+B^n$ is a packing of balls, where $\Lambda$ is a lattice, the set $S$ has (finite) cardinality $\#S = N$ and $\Lambda\cap (S-S)=\{o\}$, and $g_n:\R^n\to \R$ is a Schwarz function satisfying
$g_n(o) = \widehat{g_n}(o) = 1, \, \widehat{g_n} \ge 0,$ then
\begin{equation}
	\label{CohnElkies}
	N+\sum_{z\in\Lambda,\;v,w\in S\atop
		z+v-w\neq o} g_n(z+v-w)\geq \frac{N^2 }{\det \Lambda}.
\end{equation}
\end{prop}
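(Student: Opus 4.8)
The plan is to apply the Poisson summation formula to the lattice-translates of $g_n$ and then package the positivity of $\widehat{g_n}$ into the desired inequality. First I would set up the double sum over $S$: for fixed $v,w \in S$ the function $x \mapsto g_n(x + v - w)$ is Schwartz, so Poisson summation over $\Lambda$ gives
\[
\sum_{z \in \Lambda} g_n(z + v - w) = \frac{1}{\det \Lambda} \sum_{y \in \Lambda^*} \widehat{g_n}(y)\, e^{2\pi i \langle y, v - w\rangle}.
\]
Summing this identity over all pairs $(v,w) \in S \times S$, the right-hand side becomes
\[
\frac{1}{\det \Lambda} \sum_{y \in \Lambda^*} \widehat{g_n}(y)\, \Bigl| \sum_{v \in S} e^{2\pi i \langle y, v\rangle} \Bigr|^2,
\]
which is a sum of nonnegative terms since $\widehat{g_n} \ge 0$. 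Dropping every term except $y = o$ — where $\widehat{g_n}(o) = 1$ and the inner sum equals $N$ — yields the lower bound $N^2/\det\Lambda$ for the left-hand side.

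Next I would rewrite the left-hand side in terms of the quantities appearing in the statement. The left-hand side is $\sum_{v,w \in S} \sum_{z \in \Lambda} g_n(z + v - w)$. I split off the terms where $z + v - w = o$. By hypothesis $\Lambda \cap (S - S) = \{o\}$, so $z + v - w = o$ with $z \in \Lambda$ and $v,w \in S$ forces $v - w \in \Lambda \cap (S-S) = \{o\}$, hence $v = w$ and $z = o$; there are exactly $N$ such triples, each contributing $g_n(o) = 1$. All remaining triples have $z + v - w \neq o$, and their contribution is exactly the sum $\sum_{z \in \Lambda,\, v,w \in S,\, z+v-w \neq o} g_n(z+v-w)$ in \eqref{CohnElkies}. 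Combining with the Poisson lower bound gives
\[
N + \sum_{\substack{z \in \Lambda,\; v,w \in S \\ z + v - w \neq o}} g_n(z + v - w) \;\geq\; \frac{N^2}{\det \Lambda},
\]
which is precisely the claimed inequality.

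The only genuine subtlety — and the step I would be most careful about — is the legitimacy of the termwise Poisson summation and the rearrangement of the resulting double sum over $S \times \Lambda^*$: since $S$ is finite this is a finite sum of absolutely convergent series (Schwartz decay of $g_n$ and of $\widehat{g_n}$ guarantees absolute convergence of both sides of Poisson summation), so Fubini applies and no convergence issue actually arises; I would state this briefly rather than belabor it. One should also note that the packing hypothesis $\Lambda + S + B^n$ being a packing is not used in deriving \eqref{CohnElkies} itself — it is the ambient hypothesis that makes the inequality useful, because it will later be combined with the fact that $g_n \le 0$ outside the ball of radius $\lambda$ (the "magic function" property) to force the left-hand side to be small. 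For the proof of the proposition as stated, only $g_n(o) = \widehat{g_n}(o) = 1$, $\widehat{g_n} \ge 0$, Schwartz decay, and $\Lambda \cap (S-S) = \{o\}$ are needed.
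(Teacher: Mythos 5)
Your proof is correct and follows exactly the route indicated in the paper, which simply remarks that the proposition ``follows at once from the Poisson summation formula for (translated versions) of the lattice $\Lambda$'': you apply Poisson summation to each translate $g_n(\cdot + v - w)$, sum over $(v,w)\in S\times S$ to obtain a nonnegative series over $\Lambda^*$ with the $y=o$ term equal to $N^2/\det\Lambda$, and split off the $N$ triples with $z+v-w=o$ using $\Lambda\cap(S-S)=\{o\}$. Your remarks on convergence and on the role of the packing hypothesis are accurate; nothing is missing.
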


This proposition follows at once from the Poisson summation formula for (translated versions of) the lattice $\Lambda.$ Moreover, if one assumes that $g_n(x) \leq 0$ for $|x|\geq \varrho$, then
$\Delta(\Lambda + S) \le (\varrho/2)^n.$ 

We quickly recall the breakthrough from both works \cite{Via17,CKMRV17}: if $n=8$, then Viazovska constructed a radial Schwartz function $g_8:\R^8 \to \R$ which satisfies the hypotheses above with $\varrho={\sqrt{2}}.$ Moreover, $g_8$ satisfies that $g_8(\sqrt{2k})=0$ for integers $k\geq 1$, $g_8$ has a simple root at $\sqrt{2}$, and $g_8$ has a double root at $\sqrt{2k}$ when $k\geq 2$ (as $g_8$ is radial, we write $g_8(x)$ as  $g_8(|x|)$, and we will do similarly in the case of $g_{24}$).
For $n=24$, Cohn, Kumar, Miller, Radchenko and Viazovska subsequently constructed a radial function $g_{24}:\R^{24}\to \R$ for $\varrho=2$, which additionally satisfies $g_{24}(\sqrt{2k})=0$
for integers $k\geq 2$, has a simple root at $2$, and
$g_{24}$ has a double root at $\sqrt{2k}$
when $k\geq 3$. 

We are going to need the following properties regarding the functions $g_8$ and $g_{24}:$ 

\begin{lemma}
\label{tkg8}
For  $R>2$, there exists $\alpha_8(R),\alpha_{24}(R)>1$ such that the following hold:
\begin{enumerate} 
	\item if $\sqrt{2}\leq t\leq R$, then there exists an integer $k\geq 1$ such that 
$$
|t^2-2k|\leq \alpha_8(R)\sqrt{|g_8(t)|};
$$
   \item if $2 \le t \le R,$ then there exists an integer $k \ge 2$ such that 
$$
|t^2-2k|\leq \alpha_{24}(R)\sqrt{|g_{24}(t)|}.
$$
\end{enumerate}
Moreover, there exists an absolute constant $c > 0$ such that one may take $\alpha_8(R) \le \rho_0 R^{\frac{3}{2}} e^{\frac{5}{4} \pi R}$ and $\alpha_{24}(R) \le \rho_1 R^{\frac{11}{2}} e^{\frac{5\sqrt{2}}{2} \pi R}, \, \forall \, R > 1,$ where $\rho_0,\rho_1 > 0$ are two absolute computable constants.  
\end{lemma}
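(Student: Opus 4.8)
The plan is to quantify the vanishing of $g_8$ and $g_{24}$ at the points $\sqrt{2k}$ by exploiting that $g_8$ (resp. $g_{24}$) has (at least) simple zeros at all the relevant radii and genuinely double zeros at all but the first. Fix $R > 2$ and work on the compact interval $[\sqrt 2, R]$ for $g_8$ (and $[2,R]$ for $g_{24}$). The function $h(t) := g_8(t)$ is a restriction of a radial Schwartz function, hence real-analytic and smooth on this interval, with known zero set $\{\sqrt{2k} : 1 \le k \le \lfloor R^2/2\rfloor\}$ inside it; the zero at $\sqrt 2$ is simple and those at $\sqrt{2k}$, $k \ge 2$, are (at least) double. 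The key elementary fact is: if $\varphi$ is $C^2$ on an interval, $\varphi$ vanishes to order $\ge 2$ at a point $p$ in the interior, then on a neighbourhood one has $|t - p|^2 \le \frac{2}{m}|\varphi(t)|$ where $m = \inf |\varphi''|$ near $p$ — this is just Taylor's theorem. For the single simple zero at $\sqrt 2$ one uses instead $|t - \sqrt 2| \le \frac1{m_1}|g_8(t)|$ with $m_1 = \inf|g_8'|$ near $\sqrt 2$, which is even stronger. Patching these local estimates together over the finitely many zeros, and using that $g_8$ is bounded away from $0$ away from its zeros on the compact set, yields a bound of the shape $|t^2 - 2k| \le \alpha_8(R)\sqrt{|g_8(t)|}$ for the nearest $k$, since $|t^2 - 2k| = |t - \sqrt{2k}|\cdot|t+\sqrt{2k}| \le (t + \sqrt{2k})\cdot |t - \sqrt{2k}| \lesssim R \cdot |t - \sqrt{2k}|$.

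The first substantive step is therefore to record the explicit series/integral representations of $g_8$ and $g_{24}$ from \cite{Via17, CKMRV17} — these are built from weakly holomorphic modular forms via Laplace-type transforms — and to extract from them two quantitative ingredients on $[\sqrt 2, R]$: (i) an \emph{upper} bound on $|g_8''|$ (resp. $|g_{24}''|$ and on $|g_8'|$ near the simple root), of the form $\le \rho_0 R^{a} e^{b\pi R}$, coming from differentiating the integral representation and estimating the exponential decay; and (ii) a matching \emph{lower} bound on the "curvature at the zeros" — i.e. $|g_8''(\sqrt{2k})|$ is bounded below, and $|g_8'(\sqrt 2)|$ is bounded below — again read off from the explicit formulas. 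Combining, one sets $\alpha_8(R)$ to absorb both the curvature lower bound and the factor $R$ from $t + \sqrt{2k} \le 2R$; the stated shape $\alpha_8(R) \le \rho_0 R^{3/2} e^{\frac54 \pi R}$ then corresponds to tracking the dominant exponential $e^{\frac54\pi R}$, which is the growth rate of the relevant Eisenstein-type component of $g_8$ (the $\frac54\pi$ is $\frac{\pi}{2}$ times the "$|x|^2 = 2R^2$"-type exponent), and analogously $e^{\frac{5\sqrt2}{2}\pi R}$ for $g_{24}$ where the normalization $\varrho = 2$ shifts the constants.

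The main obstacle I expect is step (ii): getting a \emph{uniform lower bound} on the second derivatives $|g_8''(\sqrt{2k})|$ over all $k$ with $\sqrt{2k} \le R$, and controlling the transition regions between consecutive zeros where $g_8$ is small but not zero. One cannot simply say "$g_8$ is nonzero away from its zeros, hence bounded below" with a uniform constant, because as $R$ grows there are more zeros packed into $[\sqrt2, R]$ and the local minima of $|g_8|$ between them shrink; one must show they shrink no faster than the allowed $\alpha_8(R)^{-2}(t^2-2k)^2$. The cleanest route is probably to avoid a naive covering argument and instead use the global structure: write $g_8(t) = (t^2 - 2)\,\big(\prod_{k\ge2}(1 - t^2/2k)^2\big)\cdot G(t)$-type factorization suggested by the known zeros (or more honestly, use the explicit formula to factor out $\sin^2(\pi t^2/2)$-like behaviour), reducing everything to lower/upper bounds on a single non-vanishing analytic factor $G$ on $[\sqrt2,R]$; then (i) and (ii) both reduce to estimating $G$ and $G'$, $G''$ from the integral representation, which is a clean (if tedious) Schwartz-decay computation. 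Throughout, keeping the constants $\rho_0, \rho_1$ genuinely absolute (independent of $R$) requires care that every estimate on the integral representations is uniform in $R$, with the full $R$-dependence displayed explicitly in the $R^a e^{b\pi R}$ factor.
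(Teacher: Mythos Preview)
Your closing suggestion --- factor out a $\sin^2(\pi t^2/2)$-type term and bound the remaining non-vanishing factor --- is precisely the paper's proof, but you treat it as a route to be discovered rather than the starting point. The representation
\[
g_8(r)=\frac{\pi}{2160}\,\sin^2\!\Big(\frac{\pi r^2}{2}\Big)\int_0^\infty A_8(t)\,e^{-\pi r^2 t}\,dt,\qquad A_8<0,
\]
is already explicit in \cite{Via17}, so the ``non-vanishing analytic factor $G$'' is simply this Laplace integral. The paper's whole argument is a lower bound on $\big|\int_0^\infty A_8(t)e^{-\pi r^2 t}\,dt\big|$: one checks $A_8(t)\le -c\,t^2 e^{-\pi/t}$ for small~$t$, restricts the integral to $t\in[\tfrac1{2r},\tfrac2r]$, and reads off $\gtrsim r^{-3}e^{-\frac{5}{2}\pi r}$. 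Combined with $\sin^2(\pi r^2/2)\ge\tfrac15\min_k|r^2-2k|^2$ this gives $\alpha_8(R)\lesssim R^{3/2}e^{\frac{5}{4}\pi R}$ in one stroke; the exponent $\tfrac54\pi$ is half of the $\tfrac52\pi$ from the saddle of $e^{-\pi(1/t+r^2t)}$, not an ``Eisenstein-type growth rate'' as you speculate. The $g_{24}$ case is parallel, with $A_{24}$ bounded via $\varphi\Delta^2$.

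By contrast, your first two paragraphs --- Taylor expansion at each zero, uniform lower bounds on $|g_8''(\sqrt{2k})|$, patching neighbourhoods --- is exactly the roundabout path you yourself warn against. Since $g_8''(\sqrt{2k})$ decays exponentially in $\sqrt{k}$, extracting a uniform curvature bound up to $k\sim R^2/2$ would force you to redo the Laplace-integral estimate anyway, and controlling the transition regions between zeros is genuinely awkward. No derivative of $g_8$ is ever computed in the paper. Drop the Taylor/covering framework entirely and start from the explicit $\sin^2$-formula; the proof is then under a page of elementary estimates on $A_8$ and $A_{24}$.
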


\begin{proof}  We provide a proof for $g_8,$ and later we indicate the places where slight modifications are needed in order to cover the $g_{24}$ case. 
	
We start by observing  that we have, for $r > \sqrt{2},$ 
\[
g_8(r) = \frac{\pi}{2160} \sin^2(\pi r^2/2) \int_0^{\infty} A_8(t) e^{- \pi r^2 t} \, dt,
\]
where $A_8(t) < 0$ for $t \in (0,+\infty).$ Notice, from the proof of Theorem~4 in Viazovska's paper, that, letting $a_0(t) = - \frac{368640}{\pi^2} t^2 e^{-\pi/t},$ then 
\[
|A_8(t) - a_0(t)| \le 2 \left(t^2 + \frac{36}{\pi^2}\right) \sum_{n=2}^{\infty} e^{2 \sqrt{2}\pi \sqrt{n}} e^{-\pi n/t} \le  2 \left(t^2 + \frac{36}{\pi^2}\right)  \frac{e^{16 \pi } e^{-2\pi/t}  }{1- e^{2 \sqrt{2} \pi} e^{-\pi/t}}. 
\]
For $t < \frac{1}{10^7},$ this implies the bound 
\[
|A_8(t) - a_0(t)| \le 4 e^{16 \pi}  \left(t^2 + \frac{36}{\pi^2}\right) e^{-2\pi/t} \le \frac{t^2 e^{-\pi/t}}{\pi^2},
\] 
which implies that $A_8(t) \le - \frac{a_0(t)}{2}$ on the interval $(0,10^{-7}).$ Thus: 
\begin{equation}\label{eq:bound-modular-A_8} 
\int_0^{\infty} A_8(t) e^{- \pi r^2 t} \, dt \le - \frac{368640}{2\pi^2} \int_0^{10^{-7}} t^2 e^{-\pi/t} e^{-\pi r^2 t} \, dt. 
\end{equation}
If $r^{-2} > \frac{10^{-14}}{4},$ we bound the right-hand side of \eqref{eq:bound-modular-A_8} by $ - \frac{368640}{2\pi^2}\cdot \int_0^{10^{-7}} e^{-\pi/t} e^{-\pi t (4/10^{14})}.$ If, on the other hand, $r^{-2} < \frac{10^{-14}}{4},$ we see that $e^{-\pi \left( \frac{1}{t} + r^2 t \right)}$  attains its maximum for $t = 1/r,$ and thus, by restricting $t$ to the interval between $\frac{1}{2r}$ and $\frac{2}{r},$  \eqref{eq:bound-modular-A_8} may be bounded by $-c_0 \frac{e^{-\frac{5}{2}\pi r}}{r^3}.$ Therefore, 
\[
c_1 R^3 e^{\frac{5}{2}\pi R} |g_8(r)| \ge \sin^2(\pi r^2/2) \ge \frac{1}{5} \min_{k \in \Z} |r^2 - 2k|^2, 
\]
where $c_1$ is an explicitly computable constant, and $r \in (\sqrt{2}, R).$  Thus, one may take $\alpha_8(R) = c_2 R^{\frac{3}{2}} e^{\frac{5}{4} \pi R}$ for some absolute (and computable) constant $c_2 > 0.$ 

For the case of $g_{24},$ such an explicit bound is not readily available. What holds, on the other hand, by Lemma~A.1 and Section~4 in \cite{CKMRV17}, is that $A_{24}(t) \le \frac{\pi}{28304640} t^{10} \varphi(i/t),$ where $\varphi$ is a certain quasimodular form, such that the $n$-th coefficient $c(n)$ of the  $q$-expansion of $\varphi \Delta^2$ satisfies
\[
|c(n)| \le 513200655360 (n+1)^{20}.
\] 
From that, we see that 
\begin{equation}\label{eq:bound-modular-A_24} 
|(\varphi \cdot \Delta^2)(i/t) + 3657830400 e^{-4 \pi/t}| \le \sum_{n=3}^{\infty}  513200655360 (n+1)^{20} e^{- 2 n \pi /t}.
\end{equation}
Bounding $(1+n) \le e^n$ and using similar estimates as in the eight-dimensional case, we obtain 
\[
|(\varphi \cdot \Delta^2)(i/t) + 3657830400 e^{-4 \pi/t}| \ \le 513200655360 e^{60} \frac{e^{-6 \pi /t}}{1- e^{20} e^{-2 \pi/t}}. 
\]
For $t_0>0$ small enough, the right-hand side above may be bounded by $\frac{1}{2}e^{-6\pi/t}$ whenever $t < t_0.$  Thus, $(\varphi \cdot \Delta^2) (i/t) \le - \frac{3657830400}{2} e^{-4 \pi/t}$ for $0<t < t_0.$ On the other hand, observe that 
\begin{align*}
\log \Delta(i/t) & \ge - \frac{2 \pi}{t} + 24 \sum_{n=1}^{\infty} \log(1- e^{-2\pi n/t}) \cr 
 					  & \ge - \frac{2 \pi}{t} - 36 \sum_{n=1}^{\infty} e^{-2\pi n/t}  \ge - \frac{2 \pi}{t} - 100.
\end{align*}
whenever $t \in (0,t_0),$ and thus $\Delta^2(i/t) \ge e^{-2\pi/t} e^{-100}.$ From that, we readily obtain that $ \varphi(i/t) \le - c_2 e^{-2\pi/t}$ for $t \in (0,t_0),$ where $c_2 = \frac{3657830400}{2} e^{100}.$ One concludes that for $c_3 = \frac{\pi}{28304640} c_2,$ one has $A_{24}(t) \le -c_3 t^{10} e^{-2\pi/t}$ if $t \in (0,t_0).$ The same analysis as in the eight-dimensional case plus the formula 
\[
g_{24}(r) = \sin^2(\pi r^2/2) \int_0^{\infty} A_{24}(t) e^{-\pi r^2 t} \, dt,
\]
show that we may take $\alpha_{24}(R) = c_4 R^{\frac{11}{2}} e^{\frac{5\sqrt{2}}{4} \pi R}$ for some absolute computable constant $c_4 > 0.$ 
\end{proof}

\subsection{Approximating a basis of $\R^n$} For linearly independent $x_1,\ldots,x_i\in \R^n$, 
let us write $\det_{i}(x_1,\ldots,x_{i})$ 
to denote the determinant of $x_1,\ldots,x_{i}$ in ${\rm lin}\{x_1,\ldots,x_{i}\}$; namely,
$$
{\rm det}_{i}(x_1,\ldots,x_{i})=\sqrt{{\rm det} [x_1,\ldots,x_{i}]^t[x_1,\ldots,x_{i}]}.
$$
In addition, if $x_1,\ldots,x_n\in \R^n$, then we write $x_1,\ldots,\hat{x}_i,\ldots,x_n$ to denote the list 
of $n-1$ elements where $x_i$ is excluded.

Our first statement of this subsection provides a condition for a basis $u_1,\ldots,u_n$ of $\R^n$ to be approximately orthonormal. 

\begin{lemma}
\label{dualbasis}
Let $u_1,\ldots,u_n\in\R^n$ be linearly independent, and let $d,D,\varrho>0$ such that
$d\leq \|u_i\|\leq D$ for $i=1,\ldots,n$, and
$\prod_{i=1}^n \|u_i\|\leq \varrho\Big|\det[u_1,\ldots,u_n]\Big|$.
If $u_1^*,\ldots,u_n^*$ is the dual basis, i.e.,
$\langle u_i,u_j^*\rangle=0$ if $i\neq j$ and $\langle u_i,u_i^*\rangle=1$, then
\begin{itemize}
\item $\frac1{D}\leq \|u_i^*\|\leq \frac{\varrho}{d}$ for $i=1,\ldots,n$;
\item assuming $x=\sum_{i=1}^n \lambda_i u_i$ for $\lambda_i\in\R$, we have
$\lambda_i\leq \frac{\varrho}{d}\cdot\|x\|$.
\end{itemize}
\end{lemma}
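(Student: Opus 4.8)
The plan is to exploit the duality relation $\langle u_i, u_j^*\rangle = \delta_{ij}$ together with Cramer's rule to express the dual basis vectors as cofactor vectors, and then to estimate their norms using the hypothesis that $\prod_i \|u_i\| \le \varrho |\det[u_1,\ldots,u_n]|$. First I would recall that $u_i^*$ is, up to sign and normalization, the vector orthogonal to $\mathrm{lin}\{u_1,\ldots,\hat u_i,\ldots,u_n\}$ whose length is $\det_{n-1}(u_1,\ldots,\hat u_i,\ldots,u_n) / |\det[u_1,\ldots,u_n]|$; indeed, pairing such a vector with $u_i$ recovers exactly $1$ by the base-times-height formula for the determinant. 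Thus
\[
\|u_i^*\| = \frac{\det_{n-1}(u_1,\ldots,\hat u_i,\ldots,u_n)}{|\det[u_1,\ldots,u_n]|}.
\]

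For the upper bound on $\|u_i^*\|$, I would use Hadamard's inequality, $\det_{n-1}(u_1,\ldots,\hat u_i,\ldots,u_n) \le \prod_{j\neq i}\|u_j\|$, so that
\[
\|u_i^*\| \le \frac{\prod_{j\neq i}\|u_j\|}{|\det[u_1,\ldots,u_n]|} = \frac{\prod_{j=1}^n\|u_j\|}{\|u_i\|\,|\det[u_1,\ldots,u_n]|} \le \frac{\varrho}{\|u_i\|} \le \frac{\varrho}{d},
\]
using $\prod_j\|u_j\| \le \varrho|\det[u_1,\ldots,u_n]|$ and $\|u_i\| \ge d$. For the lower bound, the simplest route is to pair $u_i^*$ with $u_i$: from $1 = \langle u_i, u_i^*\rangle \le \|u_i\|\,\|u_i^*\|$ by Cauchy--Schwarz, we get $\|u_i^*\| \ge 1/\|u_i\| \ge 1/D$. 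For the second bullet point, given $x = \sum_i \lambda_i u_i$, I would simply take the inner product with $u_i^*$ to isolate the coefficient: $\lambda_i = \langle x, u_i^*\rangle$, whence by Cauchy--Schwarz and the bound just proven,
\[
|\lambda_i| = |\langle x, u_i^*\rangle| \le \|x\|\,\|u_i^*\| \le \frac{\varrho}{d}\,\|x\|.
\]

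I do not anticipate a serious obstacle here; the statement is essentially a bookkeeping exercise built on Hadamard's inequality and the geometric interpretation of the dual basis. The one point deserving a little care is the identification of $\|u_i^*\|$ with the ratio of a lower-dimensional determinant to the full determinant — one should either cite Cramer's rule directly (the $j$-th coordinate of $u_i^*$ in the standard basis is $(-1)^{i+j}$ times a minor of $[u_1,\ldots,u_n]$, divided by the determinant) or argue geometrically that the component of $u_i$ orthogonal to $\mathrm{lin}\{u_j : j\neq i\}$ has length exactly $|\det[u_1,\ldots,u_n]|/\det_{n-1}(u_1,\ldots,\hat u_i,\ldots,u_n)$. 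Either way the Hadamard bound on the numerator finishes the upper estimate, and the elementary Cauchy--Schwarz pairing handles both the lower estimate and the coefficient bound; no delicate analysis is required.
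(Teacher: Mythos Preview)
Your proof is correct and essentially the same as the paper's: both identify $\|u_i^*\|=1/h_i$ where $h_i$ is the distance from $u_i$ to $\mathrm{lin}\{u_j:j\neq i\}$ (equivalently, $\|u_i^*\|=\det_{n-1}(u_1,\ldots,\hat u_i,\ldots,u_n)/|\det[u_1,\ldots,u_n]|$), apply Hadamard's inequality for the upper bound $\|u_i^*\|\le\varrho/d$, and use the pairing $\lambda_i=\langle x,u_i^*\rangle$ for the coefficient bound. The only cosmetic difference is that the paper obtains the lower bound from $h_i\le\|u_i\|$ while you use Cauchy--Schwarz on $1=\langle u_i,u_i^*\rangle$, which is the same inequality.
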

\proof
For any $u_i$, let $h_i$ be the distance from $u_i$ to ${\rm lin}\{u_1,\ldots,\hat{u}_i,\ldots,u_n\}$. Then $h_i\leq \|u_i\|$ and
$$
|\det[u_1,\ldots,u_n]| = {\rm det}_{n-1}(u_1,\ldots,\hat{u}_i,\ldots,u_n)\cdot h_i\leq h_i\cdot
\prod_{j\neq i}\|u_j\|.
$$
Since $ \|u_i^*\|=\frac1{h_i}$, we deduce that $\frac1{D}\leq \|u_i^*\|\leq \frac{\varrho}{d}$ for $i=1,\ldots,n$.
In turn, it follows that $\lambda_i=\langle x,u_i^*\rangle\leq \frac{\varrho}{d}\cdot\|x\|$. \qedhere
\endproof

\begin{lemma}
\label{closedeterminant}
If $u_1,\ldots,u_n\in\R^n$ and $v_1,\ldots,v_n\in\R^n$ satisfy that 
$\|u_i\|\leq M$ and $\|u_i-v_i\|\leq \varepsilon$ for $i=1,\ldots,n$,
$M\geq 1$ and $\varepsilon\in(0,1)$, then
$$
\Big|\det[v_1,\ldots,v_n]-\det[u_1,\ldots,u_n]\Big|\leq 2^{n}M^{n-1}\,\varepsilon \,.
$$
\end{lemma}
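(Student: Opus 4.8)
The plan is to expand the difference of determinants by multilinearity, replacing the columns $u_i$ by $v_i$ one at a time. Writing $v_i = u_i + (v_i - u_i)$ and using that $\det[\,\cdot\,]$ is linear in each column, one gets the telescoping identity
$$
\det[v_1,\ldots,v_n]-\det[u_1,\ldots,u_n]=\sum_{k=1}^n \det[v_1,\ldots,v_{k-1},\,v_k-u_k,\,u_{k+1},\ldots,u_n],
$$
since the $k$-th partial sum of the right-hand side collapses to $\det[v_1,\ldots,v_k,u_{k+1},\ldots,u_n]-\det[u_1,\ldots,u_n]$.

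Next I would bound each summand by Hadamard's inequality:
$$
\big|\det[v_1,\ldots,v_{k-1},\,v_k-u_k,\,u_{k+1},\ldots,u_n]\big|\leq \|v_k-u_k\|\cdot\prod_{j<k}\|v_j\|\cdot\prod_{j>k}\|u_j\|.
$$
Here $\|v_k-u_k\|\leq \varepsilon$ and $\|u_j\|\leq M$ by hypothesis, while $\|v_j\|\leq \|u_j\|+\|v_j-u_j\|\leq M+\varepsilon\leq 2M$ by the assumptions $M\geq 1$ and $\varepsilon<1$. Hence the $k$-th term is at most $\varepsilon\,(2M)^{k-1}M^{n-k}=2^{k-1}M^{n-1}\varepsilon$.

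Finally, summing over $k$ gives $\sum_{k=1}^n 2^{k-1}M^{n-1}\varepsilon=(2^n-1)M^{n-1}\varepsilon\leq 2^nM^{n-1}\varepsilon$, which is the claim. There is no genuine obstacle in this argument; the only point worth care is to sum the geometric series $\sum_{k=1}^n 2^{k-1}=2^n-1$ exactly, rather than bounding all $n$ terms by the largest one, which would only yield the weaker constant $n\,2^{n-1}$.
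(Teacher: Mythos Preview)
Your proof is correct and essentially identical to the paper's: the paper also uses the same telescoping expansion via multilinearity (with the roles of the $u_j$'s and $v_j$'s before and after the difference column swapped), applies Hadamard's inequality with $\|v_j\|\leq 2M$, and sums the resulting geometric series $\sum_{i=1}^n 2^{i-1}M^{n-1}\varepsilon<2^nM^{n-1}\varepsilon$.
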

\proof Using
$\Big|\det[w_1,\ldots,w_n]\Big|\leq \prod_{i=1}^n\|w_i\|$,
$\|v_i\|\leq 2M$ and the linearity of the determinant, we have
\begin{eqnarray*}
\Big|\det[v_1,\ldots,v_n]-\det[u_1,\ldots,u_n]\Big|&=& 
\Big|\det[v_1-u_1,v_2,\ldots,v_n]+ \det[u_1,v_2-u_2,v_3,\ldots,v_n]+\ldots\\
&&\ldots+\det[u_1,\ldots,u_{n-1},v_n-u_n]\Big|\\
&\leq &
\sum_{i=1}^{n} 2^{i-1}M^{n-1}\varepsilon<2^{n}M^{n-1}\cdot \varepsilon. 
\end{eqnarray*}
\endproof

Finally, we use thees two lemmas to estimate how much the pairwise scalar products detetermine a basis, up to congruency, in a quantitative form. 

\begin{lemma}
\label{closeintegral}
For $M>1$ and $n\geq 2$ there exist explicit $\varepsilon_M\in(0,\frac14)$ and $\gamma_M>1$ (depending on $n$ and $M$) with the following properties.
If $u_1,\ldots,u_n\in\R^n$ satisfy $\det[u_1,\ldots,u_n]\geq 1/M$, and $\|u_i\|\leq M$ for $i=1,\ldots,n$, 
and $|\langle u_i,u_j\rangle-k_{ij}|\leq \varepsilon$ and $k_{ij}\in \Z$ for $i,j=1,\ldots,n$  and
$\varepsilon\in(0,\varepsilon_M)$, then there exist
 $v_1,\ldots,v_n\in\R^n$ such that $\|v_i-u_i\|\leq \gamma_M \varepsilon$ 
and $\langle v_i,v_j\rangle=k_{ij}$ for  $i,j=1,\ldots,n$.
\end{lemma}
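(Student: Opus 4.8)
The plan is to set up the problem as a fixed-point / implicit-function type argument. We are given $u_1,\dots,u_n$ with Gram matrix $G = (\langle u_i,u_j\rangle)$ which is $\varepsilon$-close (entrywise) to the integer matrix $K = (k_{ij})$, and we want to perturb the $u_i$ by $O(\varepsilon)$ so that the Gram matrix becomes exactly $K$. Write the unknown vectors as $v_i = \sum_j a_{ij} u_j$, so we are looking for a matrix $A$ close to the identity $I_n$ with $A G A^t = K$. Since $\det[u_1,\dots,u_n] \ge 1/M$ and $\|u_i\| \le M$, Lemma \ref{dualbasis} (applied with $d$ a lower bound on $\|u_i\|$ coming from the determinant bound, $D = M$, and $\varrho = M^n$ or a similar explicit quantity) gives two-sided bounds on $G$: its eigenvalues lie in an explicit interval $[c_M, C_M]$ with $c_M > 0$. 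In particular $G$ is invertible with explicitly bounded inverse, and so is $K$ provided $\varepsilon < \varepsilon_M$ is small enough that $\|G - K\| < c_M/2$, say.

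Next I would solve $AGA^t = K$ explicitly using the symmetric square root. Since $G, K$ are symmetric positive definite with spectra bounded away from $0$ and $\infty$, set $A = K^{1/2} G^{-1/2}$; then $AGA^t = K^{1/2} G^{-1/2} G G^{-1/2} K^{1/2} = K$, as desired. It remains to check that $A$ is close to $I_n$. Write $A - I_n = K^{1/2} G^{-1/2} - G^{1/2} G^{-1/2} = (K^{1/2} - G^{1/2}) G^{-1/2}$. The operator Lipschitz bound for the square root on matrices with spectrum in $[c_M/2, 2C_M]$ — which is explicit, with constant roughly $(c_M)^{-1/2}$ in this regime (one can get it from the integral formula $X^{1/2} = \frac{1}{\pi}\int_0^\infty (X + s)^{-1} X \, s^{-1/2}\,\mathrm{d}s$ or simply from $\|K^{1/2} - G^{1/2}\| \cdot \|K^{1/2} + G^{1/2}\| \ge$ control of $\|K - G\|$ after commuting, but cleanest is the resolvent bound) — gives $\|K^{1/2} - G^{1/2}\| \le L_M \|K - G\| \le L_M \sqrt{n}\,\varepsilon$, and $\|G^{-1/2}\| \le c_M^{-1/2}$. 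Hence $\|A - I_n\| \le L_M c_M^{-1/2} \sqrt{n}\, \varepsilon =: \gamma_M' \varepsilon$.

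Finally I would translate the bound on $\|A - I_n\|$ into a bound on $\|v_i - u_i\|$. We have $v_i - u_i = \sum_j (a_{ij} - \delta_{ij}) u_j$, so $\|v_i - u_i\| \le \sum_j |a_{ij} - \delta_{ij}| \cdot \|u_j\| \le M \sqrt{n}\, \|A - I_n\| \le M n \gamma_M' \varepsilon$, and we set $\gamma_M = M n \gamma_M'$; shrinking $\varepsilon_M$ further if necessary so that $\gamma_M \varepsilon < $ whatever threshold was used above keeps everything consistent. The vectors $v_1,\dots,v_n$ are automatically linearly independent since $A$ is invertible (being within $1/2$ of $I_n$ in norm once $\varepsilon_M$ is small), so they form a genuine basis with exact Gram matrix $K$.

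The main obstacle I anticipate is purely bookkeeping: making the constants $\varepsilon_M$ and $\gamma_M$ genuinely \emph{explicit} and tracking the dependence on $n$ and $M$ through the eigenvalue bounds on $G$, the operator-Lipschitz constant of the square root, and the smallness threshold needed to keep $K$ positive definite. There is no conceptual difficulty — the square-root trick makes the algebra clean — but one must be a little careful that the lower eigenvalue bound $c_M$ for $G$ really does follow from $\det[u_1,\dots,u_n] \ge 1/M$ together with $\|u_i\| \le M$ via Lemma \ref{dualbasis} (the smallest eigenvalue of $G$ is $\min_i h_i^2 \ge (\det \Lambda / \prod_{j} \|u_j\|)^2 \ge M^{-2}(M^{1-n})^2$, which is the kind of explicit bound needed), and that all the perturbation estimates are applied on a spectral interval that is stable under an $O(\varepsilon)$ perturbation for $\varepsilon < \varepsilon_M$.
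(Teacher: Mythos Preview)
Your approach is correct and takes a genuinely different route from the paper. The paper sets $B=A^tA$ with $A=[u_1,\dots,u_n]$, uses the \emph{Cholesky factorisation} $B=L(B)^tL(B)$ with $L(B)$ upper-triangular, writes $A=Q\,L(B)$ for an orthogonal $Q$, and then defines $[v_1,\dots,v_n]=Q\,L(K)$. The perturbation bound $\|L(K)-L(B)\|_2\lesssim \|K-B\|_2$ is obtained by citing an explicit estimate of Bhatia--Mukherjea on the differential of the Cholesky map, $\|DL(X)\|_F\le \tfrac{1}{\sqrt{2}}\|X\|_2^{1/2}\|X^{-1}\|_2$, combined with the fundamental theorem of calculus along the segment $tK+(1-t)B$. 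Your argument instead expresses the perturbation in the $u$-basis, $v_i=\sum_j a_{ij}u_j$, and solves $AGA^t=K$ via the \emph{symmetric square root} $A=K^{1/2}G^{-1/2}$, reducing the estimate to an operator-Lipschitz bound for $X\mapsto X^{1/2}$ on a fixed spectral window (which follows cleanly from the Sylvester identity $X^{1/2}Z+ZY^{1/2}=X-Y$ with $Z=X^{1/2}-Y^{1/2}$). Both routes need the same ingredients---a two-sided eigenvalue bound on $G$ from $\det\ge 1/M$ and $\|u_i\|\le M$, and positivity of $K$ for small $\varepsilon$---and both give explicit constants; yours is slightly more self-contained, while the paper's Cholesky route makes the orthogonal freedom transparent and ties into a clean citable derivative bound. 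One small inaccuracy: the smallest eigenvalue of $G$ is not literally $\min_i h_i^2$ as you write in the last paragraph, but the lower bound you derive (via $\sigma_{\min}(A)\ge |\det A|/\sigma_{\max}(A)^{n-1}$) is correct and matches the paper's $n^{-n}M^{-2n}$ up to harmless factors.
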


\noindent {\bf Remark.} The present argument gives $\gamma_M= \sqrt{2} n^{\frac{7}{2} + n} M^2(M^2+1)^{n} .$ 

\begin{proof} By assumption, we have $k_{ij}=k_{ji}$ for $i,j=1,\ldots,n$. Let $A=[u_1,\ldots,u_n]$. The entries $b_{ij}$ of the positive definite symmetric matrix $B=A^tA$ satisfy $|b_{ij}-k_{ij}|\leq \varepsilon$ and $|b_{ij}|\leq M^2$. It follows that each eigenvalue of $B$ is at most $n\cdot M^2$,
which together with $\det B\geq 1/M^2$ yields that each eigenvalue of $M$ is at least $n^{-n}\cdot M^{-2n}$, and moreover each principal minor of $B$ is at least $n^{-n^2} \cdot M^{-2n^2},$ since, by Cauchy's interlacing theorem, all eigenvalues of principal submatrices of $B$ are at least $n^{-n} \cdot M^{-2n}.$ 

Let $K$ be the symmetric matrix $[k_{ij}]_{i,j=1,\dots,n}$. It follows from Lemma~\ref{closedeterminant} that if $\varepsilon_M$ is sufficiently small, then $\det K\geq (M^2+1)^{-1}$ and each principal
minor of $K$ is positive, and hence $K$  is positive definite. In addition
each $|k_{ij}|\leq M^2+1$; therefore, the eigenvalues $\lambda_1\leq \ldots\leq \lambda_n$
 of $K$ satisfy
$$
n^{-n} \cdot (M^2+1)^{-n}\leq \lambda_1\leq \ldots\leq \lambda_n \leq  n(M^2+1).
$$

 We now employ an analysis originally from \cite{Bhatia-Mukherjea}: let $\mathbf{P}(n)$ and $\mathbf{B}(n)$ denote, respectively, the set of $n \times n$ positive definite matrices and  the set of $n \times n$ upper-triangular matrices with positive diagonal elements. If one considers the map $L:\mathbf{P}(n) \to \mathbf{B}(n)$ that associates to a matrix $A \in \mathbf{P}(n)$ its (unique) Cholesky factorisation matrix $L(A) \in \mathbf{B}(n)$ such that $A = L(A)^t L(A),$ then the differential of the map $L$ satisfies (cf. \cite[Equation~(31)]{Bhatia-Mukherjea})
 \[
 \| DL(A)\|_{F} \le \frac{1}{\sqrt{2}} \|A\|_2^{1/2} \|A^{-1}\|_2, 
 \]
 where $\| \cdot \|_{F}$ denotes the Frobenius norm of the operator $DL(A)$ (viewed as an operator from $T_A \mathbf{P}(n)$ to $T_{L(A)} \mathbf{B}(n)$), and $\| \cdot \|_2$ denotes the $2-$norm of a matrix.
 
  The fundamental theorem of calculus then implies that 
 \begin{align}\label{eq:contraction-bound-cholesky}
 \|L(A) - L(B)\|_2 & \le \|A - B\|_2 \sup_{t \in [0,1]} \|DL(tA + (1-t)B)\|_F \cr 
 						  & \le \frac{1}{\sqrt{2}} \|A - B\|_2 \sup_{t \in [0,1]} \|tA + (1-t)B\|_2^{1/2} \|(tA + (1-t)B)^{-1}\|_2. 
 \end{align}
With this tool in hands, notice now that the matrices $B$ and $K$ above are both positive definite, and thus we are in position to use \eqref{eq:contraction-bound-cholesky}, which gives 
\[
\|L(K) - L(B)\|_2 \le \frac{2nM^2}{\sqrt{2}}\|K-B\|_2 \cdot \sup_{t \in [0,1]}\|(tK + (1-t)B)^{-1}\|_2. 
\]
As $(tK+(1-t)B)^{-1}$ is self-adjoint, $\|(tK+(1-t)B)^{-1}\|_2 = \rho((tK+(1-t)B)^{-1}) \le n^n \cdot (M^2 + 1)^{n}.$ We then conclude
\begin{equation}\label{eq:matrix-inequality} 
\|L(K) - L(B)\|_2 \le \sqrt{2} n^{n+1} M^2(M^2+1)^n\|K-B\|_2 \le \sqrt{2} n^{n+2} M^2(M^2+1)^{n} \varepsilon. 
\end{equation}
Write now $Q \cdot L(B) = A,$ for some invertible matrix $Q.$ As $A^t A = L(B)^t L(B),$ it follows that $Q$ is an orthogonal matrix. Define then $[v_1,\dots,v_n] = Q\cdot L(K).$ By \eqref{eq:matrix-inequality} and the fact that $Q$ is orthogonal, we have 
\[
\|[v_1,\dots,v_n] - [u_1,\dots,u_n]\|_{\text{max}} \le \sqrt{n}\|L(K) - L(B)\|_{\text{max}} 
\]
\[
\le \sqrt{n} \|L(K) - L(B)\|_2 \le \sqrt{2} n^{\frac{5}{2} + n} M^2(M^2+1)^{n} \varepsilon.
\]
As $Q \cdot L(B)  = [v_1,\dots,v_n],$
we deduce that 
$$[v_1,\ldots,v_n]^t[v_1,\ldots,v_n]=L(B)^t L(B) = K.$$ 
This concludes the proof. 
\end{proof}

\section{ Stability for lattice packings}\label{sec:ProofLattice}

 The first objective of this section is to provide a short proof of the following weaker result:

\begin{prop}\label{WeakerStab}
	Let $n \in \{8,24\}.$ If $L$ is a lattice in $\R^n$ such that $\lambda(L)\geq \sqrt{2} \cdot \mathbb{I}_{\{n = 8\}} + 2 \cdot \mathbb{I}_{\{n=24\}},$  and $\det L \le 1 + \varepsilon$  for $0<\varepsilon<\varepsilon_n$, then there exist $\Phi\in O(n)$,
	a basis $w_1,\ldots,w_n$  of 
	$$\mathcal{L} = \begin{cases}
		E_8, & \text{ if } n=8, \cr 
		\Lambda_{24}, & \text{ if } n=24, 
	\end{cases}$$
	of length at most $2$ and a basis $u_1,\ldots,u_n$ of~$L$ 
	such that
	$$
	\|u_i-\Phi w_i\|\leq c_n\varepsilon^{1/2}\mbox{ \ for $i=1,\ldots,n$}\,,
	$$
	where $\varepsilon_n, c_n > 0$ are computable parameters. 
\end{prop}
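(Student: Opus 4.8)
The plan is to use the Cohn--Elkies inequality \eqref{CohnElkies} in the lattice case ($S=\{o\}$, $N=1$), which reads $\sum_{z\in L\setminus\{o\}} g_n(z) \ge \frac{1}{\det L} - 1 \ge -\varepsilon$ (using $\det L \le 1+\varepsilon$). Since $g_n \le 0$ outside the ball of radius $\varrho$ ($\varrho = \sqrt 2$ for $n=8$, $\varrho=2$ for $n=24$) and $g_n$ vanishes at the relevant radii $\sqrt{2k}$, the sum is ``almost zero'' and almost all terms are individually small. More precisely, first I would establish a uniform bound $\lambda(L^*) \le D_0$ and $\lambda(L) \le \varrho + O(\varepsilon)$ — actually $\lambda(L)\ge \varrho$ by hypothesis, and combined with $\det L \le 1+\varepsilon$ and the fact that $E_8$, $\Lambda_{24}$ are the unique densest lattices, one gets that an LLL-reduced basis $u_1,\dots,u_n$ of $L$ has all vectors of length bounded by an absolute constant $M$ (say via \eqref{LLL} and a lower bound on successive minima coming from $\lambda(L)\ge\varrho$). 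This confines everything to a fixed compact range so that Lemma~\ref{tkg8} applies with a fixed $R$.

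Next, the core step: split $\sum_{z\in L\setminus\{o\}} g_n(z) = \sum_{\|z\|\le R} g_n(z) + \sum_{\|z\|> R} g_n(z)$. The tail is $\le 0$ (as $g_n\le 0$ there); in fact one needs a quantitative lower bound on the tail, $\sum_{\|z\|>R} g_n(z) \ge -C(R)$ with $C(R)\to 0$ as $R\to\infty$, which follows from the Schwartz decay of $g_n$ together with a packing bound on the number of lattice points in annuli (using $\lambda(L)\ge\varrho$). Choosing $R$ a large absolute constant makes $C(R)$ small; then $\sum_{\sqrt 2 \le \|z\| \le R} |g_n(z)| \le \varepsilon + C(R) + (\text{negative tail correction})$. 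Hmm — more carefully: $\sum_{\|z\|\le R} g_n(z) \ge -\varepsilon - \sum_{\|z\|>R}g_n(z) \ge -\varepsilon$, but I also need an \emph{upper} bound to control $|g_n(z)|$; this comes because each term with $\|z\| > \varrho$ satisfies $g_n(z) \le 0$, so $g_n(z) \le -\,|g_n(z)|\cdot \mathbb{I}_{g_n(z)<0}$, and for $\|z\|\in(\varrho, R]$ we can split by sign. The upshot is that for every $z\in L$ with $\varrho \le \|z\| \le R$, we get $|g_n(z)| \le \varepsilon + C(R) =: \varepsilon'$, hence by Lemma~\ref{tkg8} there is an integer $k$ with $\big|\|z\|^2 - 2k\big| \le \alpha_n(R)\sqrt{\varepsilon'} \lesssim \sqrt{\varepsilon}$. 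Combined with the $n=24$ constraint $\|z\|^2 \ge 4$ (or $\ge 2$ for $n=8$), and with $\varepsilon$ small, this says that \emph{all} short vectors of $L$ have squared length within $O(\sqrt\varepsilon)$ of an even integer; in particular $\|u_i\|^2 \in 2\Z + O(\sqrt\varepsilon)$ and, applying the same to $u_i \pm u_j$ and using \eqref{xyxxyy}, $\langle u_i, u_j\rangle \in \Z + O(\sqrt\varepsilon)$.

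Now I would invoke Lemma~\ref{closeintegral} with $M$ the absolute bound on the $\|u_i\|$ and $\varepsilon \rightsquigarrow \gamma_M \cdot O(\sqrt\varepsilon)$: since $\det[u_1,\dots,u_n] = \det L \ge 1/M$ and $|\langle u_i,u_j\rangle - k_{ij}| \le O(\sqrt\varepsilon)$ for integers $k_{ij}$, there exist $v_1,\dots,v_n \in \R^n$ with $\|v_i - u_i\| \le c\sqrt\varepsilon$ and $\langle v_i,v_j\rangle = k_{ij} \in \Z$ for all $i,j$. Then the lattice $L' = \sum \Z v_i$ is an \emph{integral} lattice (Gram matrix $K=[k_{ij}]$ has integer entries), it is even ($\|v_i\|^2 = k_{ii}\in 2\Z$, and $\|v_i+v_j\|^2 = k_{ii}+k_{jj}+2k_{ij}\in 2\Z$), and $\det L' = \det K$ is within $O(\sqrt\varepsilon)$ of $(\det L)^2 \le (1+\varepsilon)^2$ by Lemma~\ref{closedeterminant}; being a positive integer, $\det K = 1$ for $\varepsilon$ small, so $L'$ is even unimodular. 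For $n=8$, Mordell's theorem forces $L' \cong E_8$; for $n=24$, one additionally checks $\lambda(L') \ge 2$ (since $\lambda(L)\ge 2$ and the squared norms of $L'$ are even integers $\ge 4 - O(\sqrt\varepsilon) > 3$, hence $\ge 4$), so by the characterization of the Leech lattice $L'\cong \Lambda_{24}$. Thus there is $\Phi \in O(n)$ and a basis $w_1,\dots,w_n$ of $\mathcal L$ with $\Phi w_i = v_i$; taking a short ($\le 2$) generating set is possible because $E_8$ and $\Lambda_{24}$ admit bases of minimal vectors (length $\sqrt 2$ and $2$ respectively), and one can re-run Lemma~\ref{closeintegral} fitting $u_i$ to a fixed such basis of $\mathcal L$ via the isomorphism. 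Then $\|u_i - \Phi w_i\| = \|u_i - v_i\| \le c_n \sqrt\varepsilon$, as claimed.

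The main obstacle I anticipate is the bookkeeping around the tail estimate $\sum_{\|z\|>R} g_n(z) \ge -C(R)$ and making $R$, $\varepsilon_n$, $c_n$ genuinely \emph{computable} — i.e. tracking how $\alpha_n(R) \sim e^{O(R)}$ from Lemma~\ref{tkg8} competes against the decay $C(R) \to 0$, and verifying that with a fixed absolute $R$ one indeed gets $|g_n(z)| = O(\varepsilon)$ hence the $\varepsilon^{1/2}$ in the conclusion. A secondary subtlety is ensuring the basis $u_1,\dots,u_n$ of $L$ really can be taken with $\|u_i\| \le M$ absolute (not just LLL up to $2^{n(n-1)/4}\det L$, which is already absolute here since $n\in\{8,24\}$ and $\det L \le 2$) and that $\det[u_1,\dots,u_n]$ stays bounded below — both are fine because $n$ is fixed, but they must be stated carefully. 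The rest is an application of the already-proven Lemmas~\ref{dualbasis}, \ref{closedeterminant}, \ref{closeintegral} plus the classical uniqueness theorems for $E_8$ and $\Lambda_{24}$.
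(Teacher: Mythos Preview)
Your approach is essentially the same as the paper's: Cohn--Elkies in the lattice case, then Lemma~\ref{tkg8}, then (via Lemma~\ref{closeintegral} and Lemma~\ref{closedeterminant}) build a nearby even unimodular lattice and invoke uniqueness. The paper packages the second half as the separate Propositions~\ref{E8unimodstab} and~\ref{Leechunimodstab}, which you rederive inline.

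Two remarks where your plan is more complicated or looser than it needs to be:

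\textbf{The tail worry is unnecessary.} You do eventually note that $g_n(z)\le 0$ for all $\|z\|\ge\varrho$, but then you still carry a $C(R)$ term and list the tail as the ``main obstacle.'' In fact no tail estimate is needed at all: since every term in $\sum_{z\in L\setminus\{o\}} g_n(z)\ge -2\varepsilon$ is non-positive, each individual term already satisfies $0\ge g_n(z)\ge -2\varepsilon$. You may therefore take $R$ to be any fixed absolute constant large enough to contain an LLL basis (the paper uses $R=2^{16}$ for $n=8$ and $R=2^{140}$ for $n=24$), and the bound $|g_n(z)|\le 2\varepsilon$ is exact, giving $|\|z\|^2-2k|\le \alpha_n(R)\sqrt{2\varepsilon}$ with no $C(R)$ contamination.

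\textbf{Two steps you gloss over that need Lemma~\ref{dualbasis}.} First, the check $\lambda(L')\ge 2$ for $n=24$: you write ``the squared norms of $L'$ are even integers $\ge 4-O(\sqrt\varepsilon)$,'' but a general $v=\sum m_iv_i\in L'$ has $\big|\|v\|^2-\|{\textstyle\sum} m_iu_i\|^2\big|$ controlled only once you bound $\max_i|m_i|$. The paper does this explicitly: given $\|v\|\le 2$, Lemma~\ref{dualbasis} bounds the $|m_i|$ by an absolute constant, so the corresponding $u\in L$ satisfies $\|u-v\|<\tfrac1{10}$; then $\|u\|\ge 2-O(\varepsilon)$ (from the $k\ge2$ constraint in Lemma~\ref{tkg8}) forces $\|v\|^2\in 2\Z_{\ge 2}$. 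Second, passing to a basis of $\mathcal L$ of length $\le 2$ is not done by ``re-running Lemma~\ref{closeintegral}'' but by a change of basis: write the short basis $z_i=\sum_j\lambda_j^{(i)}\tilde z_j$ in terms of the basis produced by Lemma~\ref{closeintegral}, use Lemma~\ref{dualbasis} to bound $|\lambda_j^{(i)}|$ by an absolute constant, and transport these (integer) coefficients to $L$ to define the final $u_i$. Both are routine, but they are the places where the argument actually uses Lemma~\ref{dualbasis}, and your plan does not invoke it.
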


We thus start this section with a statement on the stability of the $E_8$ lattice from its first $2^{16}$ lengths. Concerning that lattice, it has a basis such that each lattice vector in the basis is of length at most $2$. It is also well-known that $E_8$ is the unique even unimodular lattice in $\R^8$; namely, the unique lattice up to orthogonal transformations whose determinant is one, and for which the square length of any lattice vector is an even integer.

\begin{prop}
\label{E8unimodstab}
Let $\varepsilon\in (0,\varepsilon_0)$ with $\varepsilon_0$ sufficiently small. If $L$ is a lattice in $\R^8$ such that
$\det L\leq \frac{10}{9}$, and for any $x\in L\backslash \{o\}$ with $\|x\|\leq 2^{16}$, there exists a positive integer $k$ with
\begin{equation}
\label{E8unimodstabbasiscond}
\left|\,\|x\|^2-2k\right|\leq \varepsilon,
\end{equation}
then there exist a $\Phi\in O(8)$, a basis $u_1,\ldots,u_8$ of $L$, and a basis $z_1,\ldots,z_8$ of $E_8$ with $\max_i\|z_i\|=2$ such that
\begin{equation}
\label{E8unimodstabbasis}
\|u_i-\Phi z_i\|\leq 2^{1000}\cdot \varepsilon \mbox{ \ for }i=1,\ldots,8.
\end{equation}
\end{prop}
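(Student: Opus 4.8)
The plan is to show that the hypothesis forces $L$ to be, after a rotation, so close to an even unimodular lattice that it must literally be $E_8$ up to the claimed error on a basis. I would proceed in three stages. First, pass to a convenient basis: take an LLL basis $u_1,\dots,u_8$ of $L$, so that $\prod_i\|u_i\| \le 2^{14}\det L \le 2^{14}\cdot\frac{10}{9}$, and since $\lambda(L)\ge\sqrt{2}-o(1)$ (which follows from \eqref{E8unimodstabbasiscond} with $x$ a shortest vector, as $\|x\|^2$ must be within $\varepsilon$ of a positive even integer, hence $\ge 2-\varepsilon$), each $\|u_i\|$ is bounded below, so all $\|u_i\|\le M_0$ for an explicit $M_0 = O(1)$ well below $2^{16}$. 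Thus every $u_i$, and every short combination $u_i\pm u_j$, has length $\le 2^{16}$, so \eqref{E8unimodstabbasiscond} applies to all of them; via the identity $\langle u_i,u_j\rangle = (\|u_i+u_j\|^2-\|u_i\|^2-\|u_j\|^2)/2$ this yields $|\langle u_i,u_j\rangle - k_{ij}|\le 2\varepsilon$ for integers $k_{ij}$, and $|\|u_i\|^2 - k_{ii}|\le\varepsilon$ with $k_{ii}$ a positive even integer.

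Second, apply Lemma \ref{closeintegral} with $M = M_0$: since $\det[u_1,\dots,u_8]=\det L \in [9/10,10/9]$ and the Gram entries are within $2\varepsilon$ of the integer matrix $K=[k_{ij}]$, for $\varepsilon$ small we obtain vectors $v_1,\dots,v_8$ with $\|v_i-u_i\|\le\gamma_{M_0}\varepsilon$ and exact Gram matrix $\langle v_i,v_j\rangle = k_{ij}\in\Z$. Hence the lattice $L' = \sum_i\Z v_i$ is an integral lattice; moreover $\det L' = \det K$, and by Lemma \ref{closedeterminant} $|\det L' - \det L|$ is $O(\varepsilon)$, while $\det K$ is an integer, so for $\varepsilon$ small enough $\det K = 1$ — that is, $L'$ is unimodular. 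It is also even, since the diagonal Gram entries $k_{ii}$ are even and an integral lattice with even norms on a basis has all norms even by \eqref{xyxxyy}. (Here one checks $L'$ is genuinely $8$-dimensional: the $v_i$ are linearly independent because their Gram determinant is $1\ne 0$.)

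Third, invoke Mordell's theorem (cited in the introduction): the only even unimodular lattice in $\R^8$ is $E_8$. Therefore there is $\Phi\in O(8)$ with $\Phi E_8 = L'$; pulling back a fixed basis $z_1,\dots,z_8$ of $E_8$ of lengths $\le 2$, we get $\Phi z_i \in L'$, and expressing $\Phi z_i = \sum_j m_{ij} v_j$ with $m_{ij}\in\Z$ bounded by an absolute constant (using Lemma \ref{dualbasis} to bound the coefficients: $\|v_i\|$ is bounded above and $\det L'=1$ is bounded below, so the dual basis is bounded, and $\|\Phi z_i\|\le 2$), we can instead \emph{define} $u_i' = \sum_j m_{ij} u_j$, which is a basis of $L$, and then $\|u_i' - \Phi z_i\| = \|\sum_j m_{ij}(u_j - v_j)\| \le \sum_j |m_{ij}|\gamma_{M_0}\varepsilon$, which is $\le 2^{1000}\varepsilon$ once all the explicit constants ($M_0$, $\gamma_{M_0}$ from the remark after Lemma \ref{closeintegral}, the coefficient bounds) are collected. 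Relabelling $u_i'$ as $u_i$ gives \eqref{E8unimodstabbasis}.

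The main obstacle is the bookkeeping in the second stage — specifically, ensuring that the threshold $\varepsilon_0$ can be chosen so that simultaneously Lemma \ref{closeintegral} applies (which needs $\varepsilon < \varepsilon_{M_0}$), the rounding $\det K = \lceil \det K \rfloor = 1$ is forced, and every intermediate length stays under $2^{16}$ so that the hypothesis \eqref{E8unimodstabbasiscond} is actually usable on all the vectors one feeds into it; all of these depend only on the \emph{a priori} bound $M_0$ coming from LLL, so they are genuinely absolute, but verifying the chain of inequalities and that $2^{1000}$ comfortably absorbs $\sqrt{2}\,n^{7/2+n}M_0^2(M_0^2+1)^n$ times the coefficient bounds is where the real work lies. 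A minor secondary point is checking that the LLL bound indeed forces $\|u_i\|\le M_0$ with $M_0$ much smaller than $2^{16}$ (one uses $\|u_i\| \le \prod_j\|u_j\|/\prod_{j\ne i}\|u_j\| \le 2^{14}\cdot\frac{10}{9}/(\lambda(L))^7$ together with $\lambda(L)\ge\sqrt2 - \varepsilon$), which is routine but must be done to license applying the hypothesis.
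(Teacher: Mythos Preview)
Your proposal is correct and follows essentially the same route as the paper: take an LLL basis, use the hypothesis on $u_i$ and $u_i+u_j$ to get near-integer Gram entries, apply Lemma~\ref{closeintegral} to realize an exact integral lattice, force it to be even unimodular via the determinant argument and Lemma~\ref{closedeterminant}, invoke Mordell's uniqueness of $E_8$, and then pull back a short $E_8$-basis through a bounded integer change of basis via Lemma~\ref{dualbasis}. The only substantive step you leave implicit that the paper spells out is the lower bound on $\det L$ (needed for Lemma~\ref{closeintegral} and for the rounding $\det K=1$): the paper obtains $\det L\ge 2^{-4}$ by applying Viazovska's theorem to a rescaling of $L$, whereas you assert $\det L\ge 9/10$ without justification---this is easily filled the same way, but should be said.
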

\proof 
We observe that $\|x\|\geq 1$ for $x\in L\backslash \{o\}$, and hence an LLL reduced basis
$\tilde{u}_1,\ldots,\tilde{u}_8$ of $L$ with
$\det[\tilde{u}_1,\ldots,\tilde{u}_8]=\det L$
 satisfies that
\begin{equation}
\label{uiupperE8stab}
\|\tilde{u}_i\|\leq 2^{\frac{8\cdot(8-1)}4}\det L< 2^{15}\mbox{ \ for $i=1,\ldots,8$.}
\end{equation}
Since each non-zero vector of $\sqrt{2}\,L$ is of length at least $\sqrt{2}$, Viazovska's theorem yields that
$\det (\sqrt{2}\,L)\geq 1$, and hence 
\begin{equation}
\label{detLlowerE8stab}
\det L\geq 2^{-4}.
\end{equation}

For  $i,j=1,\ldots,8$, \eqref{E8unimodstabbasiscond} yields a positive integer $k_{ii}$ if $i=j$
and positive integer $\tilde{k}_{ij}$ if $i\neq j$ such that
$$
\left|\,\|\tilde{u}_i\|^2-2k_{ii}\right|\leq \varepsilon \mbox{ \ and \ }
\left|\,\|\tilde{u}_i+\tilde{u}_j\|^2-2\tilde{k}_{ij}\right|\leq \varepsilon,
$$
and hence setting $k_{ij}=\tilde{k}_{ij}-k_{ii}-k_{jj}$ if $i\neq j$, we have that
$$
\left|\langle \tilde{u}_i,\tilde{u}_j\rangle -k_{ij}\right|=
\left|\frac{\langle \tilde{u}_i+\tilde{u}_j,\tilde{u}_i+\tilde{u}_j\rangle-\langle \tilde{u}_i,\tilde{u}_i\rangle-\langle \tilde{u}_j,\tilde{u}_j\rangle}2 -
\frac{2\tilde{k}_{ij}-2k_{ii}-2k_{jj}}2\right|\leq \frac32\cdot \varepsilon\leq 2\varepsilon.
$$

It follows from \eqref{uiupperE8stab}, \eqref{detLlowerE8stab} and $\det L\le 2$
that Lemma~\ref{closeintegral} applies with $M=2^{15}$ and using $2\varepsilon$ instead of $\varepsilon$
as $\varepsilon_8<\frac12\,\varepsilon_M$ for the $\varepsilon_M$ of Lemma~\ref{closeintegral}.
Therefore, there exists a basis $\tilde{z}_1,\ldots,\tilde{z}_8$ of $\R^8$ such that
\begin{eqnarray}
\label{E8stabbarz1}
\langle \tilde{z}_i,\tilde{z}_i\rangle&=& 2k_{ii}\mbox{ \ for $i=1,\ldots,8$;}\\
\label{E8stabbarz2}
\langle \tilde{z}_i,\tilde{z}_j\rangle&=& k_{ij}\mbox{ \ for $i,j=1,\ldots,8$ with $i\neq j$;}\\
\label{E8stabbarz3}
\left\| \tilde{u}_i-\tilde{z}_i\right\|&\leq & 
2^{11}M^{2}\cdot 8^8 \cdot(M^2+1)^{8} \cdot 2\varepsilon \le
2^{500}\cdot\varepsilon\mbox{ \ for $i=1,\ldots,8$.}
\end{eqnarray}
It follows from \eqref{E8stabbarz1} and \eqref{E8stabbarz2} that the determinant of the Gram 
matrix associated to $\tilde{z}_1,\ldots,\tilde{z}_8$ is an integer; therefore,
\begin{equation}
\label{tildezdetE8}
\left(\det[\tilde{z_1},\ldots,\tilde{z}_8]\right)^2\in\N\backslash \{0\}.
\end{equation}
Combining Lemma~\ref{closedeterminant} and \eqref{E8stabbarz3} yields that
$$
\Big|\det[\tilde{z}_1,\ldots,\tilde{z}_8]-\det[\tilde{u}_1,\ldots,\tilde{u}_8]\Big|\leq 2^{8}M^{7}\cdot 2^{500}\varepsilon
\le 2^{700}\varepsilon_8<\frac12;
$$
therefore, we deduce from
$\det[\tilde{u}_1,\ldots,\tilde{u}_8]<1+\varepsilon$ and \eqref{tildezdetE8} that
$$
\det[\tilde{z}_1,\ldots,\tilde{z}_8]=1.
$$
Taking also \eqref{E8stabbarz1} into account, we deduce that the lattice generated by $\tilde{z}_1,\ldots,\tilde{z}_8$
is an even unimodular lattice. Therefore, we may asume that $\tilde{z}_1,\ldots,\tilde{z}_8$ is a basis of $E_8$.
As $\tilde{u}_1,\ldots,\tilde{u}_8$ is an LLL basis of $L$,  \eqref{E8stabbarz3} yields that
$$
\prod_{i=1}^8\|\tilde{z}_i\|\leq\left(1+2^{700}\cdot\varepsilon\right)^8\prod_{i=1}^8\|\tilde{u}_i\|\leq \left(1+2^{700}\cdot\varepsilon\right)^8\cdot 2^{\frac{8\cdot(8-1)}4}\det L< 2^{15}.
$$

Let $z_1,\ldots,z_8$ be a basis of $E_8$ such that each $\|z_i\|\leq 2$.
Readily, we have $\|\tilde{z}_i\|>1$, and hence we deduce from Lemma~\ref{dualbasis} that
$z_i=\sum_{j=1}^8\lambda^{(i)}_j\tilde{z}_j$ where
\begin{equation}
\label{lambdaE8}
|\lambda^{(i)}_j|\leq 2^{15}\cdot \|z_i\|\leq 2^{16}
\end{equation}
and the $8\times 8$ matrix $[a_{ij}]$ with $a_{ij}=\lambda^{(i)}_j$ has integer entries and deteminant $\pm 1$.

Finally, we consider the basis $u_i=\sum_{j=1}^8\lambda^{(i)}_j\tilde{u}_j$, $i=1,\ldots,8$ of $L$.
It follows from \eqref{E8stabbarz3} that $i=1,\ldots,8$, then
$$
\left\|u_i-z_i\right\|\leq 8\cdot 2^{16}\cdot
2^{700}\cdot\varepsilon\leq 2^{1000}\varepsilon,
$$
completing the proof of Proposition~\ref{E8unimodstab}.
\endproof

A similar argument proves the following result in dimension 24:

\begin{prop}
\label{Leechunimodstab}
Let $\varepsilon\in (0,\varepsilon_0)$ with $\varepsilon_0$ sufficiently small. If $L$ is a lattice in $\R^{24}$ such that
$\det L\leq \frac{10}{9}$, and for any $x\in L\backslash \{o\}$ with $\|x\|\leq 2^{140}$, there exists an integer $k\geq 2$ with
\begin{equation}
\label{Leechunimodstabbasiscond}
\left|\,\|x\|^2-2k\right|\leq \varepsilon,
\end{equation}
then there exist a $\Phi\in O(24)$, a basis $u_1,\ldots,u_{24}$ of $L$, and a basis $z_1,\ldots,z_{24}$ of the Leech lattice $\Lambda_{24}$ such that
\begin{equation}
\label{Leechunimodstabbasis}
\|u_i-\Phi z_i\|\leq 2^{-10^{10}}\cdot \varepsilon \mbox{ \ for }i=1,\ldots,24.
\end{equation}
\end{prop}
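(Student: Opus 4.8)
The plan is to mimic the proof of Proposition~\ref{E8unimodstab} step by step, tracking the changes forced by the larger ambient dimension and by the defining property of the Leech lattice (minimal length $2$ rather than merely even unimodular). First I would extract an LLL-reduced basis $\tilde u_1,\dots,\tilde u_{24}$ of $L$; since every non-zero vector of $L$ has length $\geq\sqrt 2$ (this follows from \eqref{Leechunimodstabbasiscond} with $k\ge 2$, giving $\|x\|^2\ge 4-\varepsilon$, hence in fact $\|x\|\geq\sqrt 2$ and we will sharpen this below), the bound \eqref{LLL} with $n=24$ yields $\|\tilde u_i\|\leq 2^{\frac{24\cdot 23}{4}}\det L<2^{138}\cdot\frac{10}{9}<2^{140}$, which is precisely why the hypothesis is stated for vectors of length at most $2^{140}$. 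Applying \eqref{Leechunimodstabbasiscond} to $\tilde u_i$ and to $\tilde u_i+\tilde u_j$ and subtracting produces integers $k_{ij}$ with $|\langle\tilde u_i,\tilde u_j\rangle-k_{ij}|\le\tfrac32\varepsilon\le 2\varepsilon$, exactly as in the $E_8$ case.

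Next I would invoke Lemma~\ref{closeintegral} with $n=24$ and $M=2^{140}$ (one needs a lower bound $\det L\geq 1/M$, which comes from applying Theorem~\ref{Leech} to the packing $L+B^{24}$: the center density $1/\det L$ is at most $1$ only after rescaling, but in any case $\det L$ is bounded below by an absolute positive constant, which suffices since we may enlarge $M$). This gives vectors $\tilde z_1,\dots,\tilde z_{24}$ with $\langle\tilde z_i,\tilde z_j\rangle=k_{ij}$ exactly and $\|\tilde u_i-\tilde z_i\|\le\gamma_M\cdot 2\varepsilon$; with $\gamma_M=\sqrt 2\, n^{7/2+n}M^2(M^2+1)^n$ this is of the shape $2^{C}\varepsilon$ for a large but explicit constant $C$ (roughly $24\cdot 280 + O(1)\approx 2^{6800}$-scale in the exponent, comfortably accommodated by the crude bound $2^{-10^{10}}$ in the statement after the final change of basis). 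The Gram matrix of the $\tilde z_i$ is integral with even diagonal, so $(\det[\tilde z_1,\dots,\tilde z_{24}])^2$ is a positive integer; combining Lemma~\ref{closedeterminant} with $\det[\tilde u_1,\dots,\tilde u_{24}]=\det L\le\frac{10}{9}$ and $\varepsilon$ small forces $\det[\tilde z_1,\dots,\tilde z_{24}]=1$, so the lattice $\tilde L$ generated by the $\tilde z_i$ is even and unimodular in $\R^{24}$.

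The one genuinely new point compared with dimension $8$ is that even unimodular lattices in $\R^{24}$ are not unique (there are $24$ Niemeier lattices), so I must rule out all of them except $\Lambda_{24}$. Here the condition $k\geq 2$ in \eqref{Leechunimodstabbasiscond} is decisive: it says that $\tilde L$, which is $\varepsilon$-close to $L$, has no vectors of squared length $2$ among those of length $\le 2^{140}$, hence (its vectors having even squared length and being a lattice) no roots at all, i.e.\ $\tilde L$ has minimal length $\geq\sqrt 4=2$. But the Leech lattice is the \emph{unique} even unimodular lattice in $\R^{24}$ with minimal length $2$ (no roots), so $\tilde L$ is isometric to $\Lambda_{24}$; choose an isometry and let $\tilde z_1,\dots,\tilde z_{24}$ be identified with a basis of $\Lambda_{24}$. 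The remaining step is the change-of-basis trick: fix a basis $z_1,\dots,z_{24}$ of $\Lambda_{24}$ with each $\|z_i\|\le 2^{140}$ (such a basis exists — e.g.\ an LLL basis, or the explicit bound in Theorem~\ref{Leechstab}), write $z_i=\sum_j\lambda^{(i)}_j\tilde z_j$ with $[\lambda^{(i)}_j]$ an integer matrix of determinant $\pm1$, bound $|\lambda^{(i)}_j|$ via Lemma~\ref{dualbasis} (using $\det\tilde L=1$, $\|\tilde z_j\|\le 2^{140}$, and $\|\tilde z_j\|>1$), and set $u_i=\sum_j\lambda^{(i)}_j\tilde u_j$; then $\|u_i-\Phi z_i\|\le 24\cdot(\max|\lambda^{(i)}_j|)\cdot\gamma_M\cdot 2\varepsilon$, which is $\le 2^{-10^{10}}\varepsilon$ once we absorb everything into the (deliberately generous) constant. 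The main obstacle, such as it is, is purely bookkeeping: verifying that the tower of explicit constants ($M=2^{140}$, $\gamma_M$, the LLL factor $2^{\frac{24\cdot23}{4}}$, the change-of-basis factor) genuinely closes up against the claimed exponent, and confirming that Lemma~\ref{closeintegral}'s hypotheses (in particular the lower bound on $\det$) are met with the stated $M$; the structural input — uniqueness of the root-free Niemeier lattice — is classical and quoted in the introduction.
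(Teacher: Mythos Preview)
Your plan matches the paper's proof essentially step for step: LLL basis with $\|\tilde u_i\|<2^{139}$, lower bound on $\det L$ from Theorem~\ref{Leech} (the paper gets $\det L\ge 2^{-24}$ via $\det(2L)\ge 1$), the polarization identity to make the Gram entries nearly integral, Lemma~\ref{closeintegral} to produce an even unimodular lattice $\Lambda=\langle\tilde z_1,\dots,\tilde z_{24}\rangle$, and finally the identification $\Lambda\cong\Lambda_{24}$. The paper in fact stops there (taking $u_i=\tilde u_i$, $z_i=\tilde z_i$) and does not perform the extra change-of-basis you describe at the end; that step is harmless but unnecessary for the statement as written.

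The one place your sketch is too casual is the sentence ``$\tilde L$, which is $\varepsilon$-close to $L$, has no vectors of squared length $2$.'' The hypothesis \eqref{Leechunimodstabbasiscond} constrains vectors of $L$, not of $\tilde L$, and the $\varepsilon$-closeness you have established holds only for the $24$ basis vectors, not for arbitrary lattice vectors. A putative root $z\in\tilde L$ with $\|z\|=\sqrt 2$ could in principle have large coordinates in the basis $\tilde z_1,\dots,\tilde z_{24}$, and then the corresponding $u=\sum_j\lambda_j\tilde u_j\in L$ need not be close to $z$. The paper closes this gap exactly as one should: from the LLL property it bounds $\prod_i\|\tilde z_i\|<2^{139}$, and since each $\|\tilde z_i\|>1$, Lemma~\ref{dualbasis} gives $|\lambda_j|\le 2^{139}\|z\|\le 2^{140}$; hence $\|u-z\|\le 24\cdot 2^{140}\cdot 2^{10000}\varepsilon<\tfrac1{10}$, and now $\|u\|\le 2^{140}$ so \eqref{Leechunimodstabbasiscond} applies to $u$ and forces $\|u\|\ge 2-2\varepsilon$, whence $\|z\|^2\in 2\Z$ with $\|z\|>2-\tfrac15$ gives $\|z\|\ge 2$. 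You should insert this coefficient-bounding step explicitly.
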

\proof 
We observe that $\|x\|\geq 1$ for $x\in L\backslash \{o\}$, and hence an LLL reduced basis
$\tilde{u}_1,\ldots,\tilde{u}_{24}$ of $L$ with
$\det[\tilde{u}_1,\ldots,\tilde{u}_{24}]=\det L$
 satisfies that
\begin{equation}
\label{uiupperLeechstab}
\|\tilde{u}_i\|\leq 2^{\frac{24\cdot(24-1)}4}\det L< 2^{139}\mbox{ \ for $i=1,\ldots,24$.}
\end{equation}
Since each non-zero vector of $2L$ is of length at least $2$, Cohn-Kumar-Miller-Radchenko-Viazovska theorem yields that
$\det (2\,L)\geq 1$, and hence 
\begin{equation}
\label{detLlowerLeechstab}
\det L\geq 2^{-24}.
\end{equation}

For  $i,j=1,\ldots,24$, \eqref{Leechunimodstabbasiscond} yields a positive integer $k_{ii}$ if $i=j$
and positive integer $\tilde{k}_{ij}$ if $i\neq j$ such that
$$
\left|\,\|\tilde{u}_i\|^2-2k_{ii}\right|\leq \varepsilon \mbox{ \ and \ }
\left|\,\|\tilde{u}_i+\tilde{u}_j\|^2-2\tilde{k}_{ij}\right|\leq \varepsilon,
$$
and hence setting $k_{ij}=\tilde{k}_{ij}-k_{ii}-k_{jj}$ if $i\neq j$, we have that
$$
\left|\langle \tilde{u}_i,\tilde{u}_j\rangle -k_{ij}\right|=
\left|\frac{\langle \tilde{u}_i+\tilde{u}_j,\tilde{u}_i+\tilde{u}_j\rangle-\langle \tilde{u}_i,\tilde{u}_i\rangle-\langle \tilde{u}_j,\tilde{u}_j\rangle}2 -
\frac{2\tilde{k}_{ij}-2k_{ii}-2k_{jj}}2\right|\leq \frac32\cdot \varepsilon\leq 2\varepsilon.
$$

It follows from \eqref{uiupperLeechstab}, \eqref{detLlowerLeechstab} and $\det L<2$
that Lemma~\ref{closeintegral} applies with $M=2^{139}$ and using $2\varepsilon$ instead of $\varepsilon$
as $\varepsilon_{24}<\frac12\,\varepsilon_M$ for the $\varepsilon_M$ of Lemma~\ref{closeintegral}.
Here taking $n=24<2^5$, we have
$$
2\cdot \gamma_M\le 2^{10000}. 
$$
Therefore,  there exists a basis $\tilde{z}_1,\ldots,\tilde{z}_{24}$ of $\R^{24}$ such that
\begin{eqnarray}
\label{Leechstabbarz1}
\langle \tilde{z}_i,\tilde{z}_i\rangle&=& 2k_{ii}\mbox{ \ for $i=1,\ldots,24$;}\\
\label{Leechstabbarz2}
\langle \tilde{z}_i,\tilde{z}_j\rangle&=& k_{ij}\mbox{ \ for $i,j=1,\ldots,24$ with $i\neq j$;}\\
\label{Leechstabbarz3}
\left\| \tilde{u}_i-\tilde{z}_i\right\|&\leq & 
2^{10000}\cdot\varepsilon\mbox{ \ for $i=1,\ldots,24$.}
\end{eqnarray}
It follows from \eqref{Leechstabbarz1} and \eqref{Leechstabbarz2} that the determinant of the Gramm matrix associated to
$\tilde{z}_1,\ldots,\tilde{z}_8$ is an integer; therefore,
\begin{equation}
\label{tildezdetLeech}
\left(\det[\tilde{z_1},\ldots,\tilde{z}_8]\right)^2\in\N\backslash \{0\}.
\end{equation}

Combining Lemma~\ref{closedeterminant} and \eqref{Leechstabbarz3} yields that
$$
\Big|\det[\tilde{z}_1,\ldots,\tilde{z}_{24}]-\det[\tilde{u}_1,\ldots,\tilde{u}_{24}]\Big|\leq 2^{24}M^{23}\cdot 2^{10000}\varepsilon
<2^{15000}\varepsilon_{24}<\frac12;
$$
therefore, we deduce from
$\det[\tilde{u}_1,\ldots,\tilde{u}_{24}]\le\frac{10}{9}$ and \eqref{tildezdetLeech} that
$$
\det[\tilde{z}_1,\ldots,\tilde{z}_{24}]=1.
$$
Taking also \eqref{Leechstabbarz1} into account, we deduce that the lattice $\Lambda$ generated by 
$\tilde{z}_1,\ldots,\tilde{z}_{24}$
is an even unimodular lattice. 
As $\tilde{u}_1,\ldots,\tilde{u}_{24}$ is an LLL basis of $L$,  \eqref{Leechstabbarz3} yields that
$$
\prod_{i=1}^{24}\|\tilde{z}_i\|\leq\left(1+2^{15000}\cdot\varepsilon\right)^{24}\prod_{i=1}^{24}\|\tilde{u}_i\|\leq \left(1+2^{15000}\cdot\varepsilon\right)^{24}\cdot 2^{\frac{24\cdot(24-1)}4}\det L< 2^{139}.
$$

Finally, we show that $\lambda(\Lambda)\geq 2$. Let
$\|z\|\leq 2$ for a $z\in\Lambda\backslash \{o\}$, and let $z=\sum_{j=1}^{24}\lambda_j\tilde{z}_j$ for $\lambda_1,\ldots,\lambda_{24}\in\Z$.
It follows 
from Lemma~\ref{dualbasis} and $\tilde{z}_j>1$ that
$$
|\lambda_j|\leq 2^{139}\cdot \|z\|\leq 2^{140} \mbox{ for $j=1,\ldots,24$,}
$$
and hence $u=\sum_{j=1}^{24}\lambda_j\tilde{u}_j\in L$ satisfies that
$$
\left\|u-z\right\|\leq 24\cdot 2^{140}\cdot
2^{15000}\cdot\varepsilon<\frac{1}{10}
$$
by \eqref{Leechstabbarz3}. Since $\|u\|\geq 2 - 2 \varepsilon$ and $\|z\|\in\sqrt{2}\Z$, we conclude that
$\|z\|\geq 2$, and hence  $\lambda(\Lambda)\geq 2$. We have already verified that $\Lambda$ is even and unimodular; therefore, $\Lambda$ is congruent to $\Lambda_{24}$.
\endproof

We are now ready to prove Proposition \ref{WeakerStab}:

\proof{\it of Proposition \ref{WeakerStab}} We first concentrate on the case of dimension $n=8.$ We assume that $\varepsilon>0$ satisfies
$$
\alpha_8(2^{16})\sqrt{\varepsilon}<\varepsilon_0.
$$

Let $L$ be a lattice in $\R^8$ such $\Delta(L) \ge (1-\varepsilon) \Delta(E_8).$ First, we notice that, by scaling, we may suppose that the packing at hand is of the form $L + \frac{\sqrt{2}}{2}B^n.$ Thus, $\lambda(L)\geq \sqrt{2}.$ In the present case, \eqref{CohnElkies} reads as
$$
1+\sum_{x\in L\backslash\{o\}} g_8(\|x\|)\geq \frac{1}{\det \Lambda}\geq 1-2\varepsilon.
$$

As $\lambda(L) \ge \sqrt{2}$ by definition, and $g_8(t)\leq 0$ for $t\geq \sqrt{2}$, we deduce that
if $x\in L\backslash\{o\}$ satisfies $\|x\|\leq 2^{16}$, then
$0 \ge g_8(\|x\|)\geq -\varepsilon$, and hence Lemma~\ref{tkg8} yields an integer $k\geq 1$ such that
$|\,\|x\|^2-2k|\leq \alpha_8(2^{16})\sqrt{\varepsilon}$. It follows from Proposition~\ref{E8unimodstab} that
there exist a $\Phi\in O(8)$, a basis $u_1,\ldots,u_8$ of $L$, and a basis $z_1,\ldots,z_8$ of $E_8$ with $\max_i\|z_i\|=2$ such that
$$
\|u_i-\Phi z_i\|\leq 2^{1000}\cdot \alpha_8(2^{16})\sqrt{\varepsilon} \mbox{ \ for }i=1,\ldots,8.
$$
This finishes our proof in this case. For the case of dimension 24, one may employ the exact same techniques as above, and therefore we omit the proof. 
\endproof

 We finally outline a proof of theorems \ref{E8stab} and \ref{Leechstab}. We skip many of the details, since they are almost verbatim obtainable from the considerations of Sections 4--8 in \cite{CoK09}. 

\begin{proof}[Outline of proof of Theorems \ref{E8stab} and \ref{Leechstab}]  Again, as in the proof of Proposition \ref{WeakerStab}, we restrict ourselves to one of the cases -- in this case, $n=24,$ since it is also the case dealt with in most of the body of \cite{CoK09}. \\
	
	\noindent\textit{Step 1:} By a Poisson summation argument, as before, we readily obtain that 
	
	$$
	1+\sum_{x\in L\backslash\{o\}} g_{24}(\|x\|)\geq \frac{1}{\det \Lambda}\geq 1-2\varepsilon.
	$$
	Thus, since $\Lambda(L) \ge 2$, if $x\in L\backslash\{o\}$ satisfies $\|x\|\leq 2^{100}$, then
	$0 \ge g_{24}(\|x\|)\geq -\varepsilon$, and hence Lemma~\ref{tkg8} yields an integer $k\geq 1$ such that
	$|\,\|x\|^2-2k|\leq \alpha_{24}(2^{100})\sqrt{\varepsilon}$. Moreover, since $g_{24}$ has a \emph{single} zero at $\|x\| = 2$ -- which follows, for instance, from the proof of the main theorems in \cite{Via17} and \cite{CKMRV17} --, then, if $x \in L \backslash\{o\}$ satisfies $\|x\| \le 2 + 10^{-20},$ then $|\, \|x\|^2 - 4 | \le C \varepsilon.$ \\
	
	\noindent\textit{Step 2:} In light of Section 4 in \cite{CoK09}, one concludes \emph{verbatim} from the proofs of lemmata 4.2 and 4.4 in that paper that, if $\det L \le 1 + \varepsilon,$ where $\varepsilon$ is sufficiently small, the lattice $L$ must have \emph{exactly} 196560 nearly minimal vectors. \\ 
	
	\noindent\textit{Step 3:} Again, a \emph{verbatim} adaptation of Sections 5 and 6 applies to this context; the only change to be made is that, instead of the fixed value of $\varepsilon = 6.733 \cdot 10^{-27}$ taken in \cite{CoK09}, one has to take $\varepsilon >0 $ arbitrary and sufficiently small. By \textit{Step 1} above, this is allowed, since in that step we showed that all the estimates in Section 4 can be obtained with $\varepsilon$ arbitrarily small, and $\mu, \nu, \omega = O(\varepsilon^{1/2})$ in Proposition 4.3 of \cite{CoK09}. \\
	
	\noindent\textit{Step 4:} Using the adapted versions of Sections 4, 5 and 6, we may also use the exact same proofs of lemmata 7.1, 7.2, 7.3, 7.4 in order to obtain Proposition 7.5 in \cite{CoK09}, with, in our case, $\varepsilon>0$ \emph{arbitrary}, as long as it is sufficiently small. Through the arguments from Section 8 in \cite{CoK09}, one readily obtains a basis $u_1, \dots, u_{24}$ of $L$ and a basis $v_1,\dots, v_{24}$ of $\Lambda_{24}$ such that $\|u_i\| < 10$ and $|\langle u_i, u_j \rangle - \langle v_i,v_j \rangle| = O(\varepsilon),\, i,j = 1,\dots,24,$ whenever  $\varepsilon$ is sufficiently small. \\
	
	\noindent\textit{Step 5:} In order to conclude, note that, by Lemma \ref{closeintegral}, there exist vectors  $z_1,\dots,z_{24}$ with $\|u_i - z_i\| \le C \varepsilon, \, i=1,\dots,24$ and $\langle z_i,z_j \rangle = \langle v_i,v_j \rangle, i,j = 1,\dots,24.$ By repeating the same arguments as in the proof of Proposition \ref{Leechunimodstab}, we obtain that the vectors $z_i$ span a lattice congruent to $\Lambda_{24},$ which concludes the desired result in the case of $n=24.$ For $n=8,$ an entirely analogous argument yields the same result. 
\end{proof}

\section{Stability for periodic packings}\label{sec:Periodic}

We will use the following statement about finite packings: 

\begin{lemma}
\label{finitepackigupp}
If $K$ is a compact convex set in $\R^8$ that contains $m\geq 1$ points such that the distance between any two points is at least $\sqrt{2}$, then $m\leq V(K+\sqrt{2}\,B^8)$. Similarly, if $K$ is a compact convex set in $\R^{24}$ that contains $m' \ge 1$ points such that any two points are of distance at least $2$, then $m' \le V(K + 2 B^{24}).$ 
\end{lemma}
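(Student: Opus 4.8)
I would prove this by the classical volume estimate for packings, supplemented by the optimal sphere‑packing bounds. Put $r=\tfrac{\sqrt2}{2}$ when $n=8$ and $r=1$ when $n=24$; in either case the hypothesis on $K$ says that the $m$ points $x_1,\dots,x_m\in K$ are pairwise at distance at least $2r$, i.e.\ that $\{x_1,\dots,x_m\}+rB^n$ is a packing all of whose centres lie in $K$.

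\emph{Step 1 (disjointness and inclusion).} Since $\|x_i-x_j\|\ge 2r$ for $i\ne j$, the balls $x_i+rB^n$ have pairwise disjoint interiors (two of them would meet in the interior only if the distance of their centres were $<2r$); and since $x_i\in K$, each of them satisfies $x_i+rB^n\subseteq K+rB^n\subseteq K+2rB^n$. Comparing the total volume of these $m$ pairwise disjoint balls with that of $K+2rB^n$ gives
$$ m\,V(rB^n)=V\!\Big(\bigcup\nolimits_{i=1}^m\big(x_i+rB^n\big)\Big)\ \le\ V\!\big(K+2rB^n\big), $$
i.e.\ $m\le V(K+2rB^n)/V(rB^n)$; for $n=8$ this is $m\le V(K+\sqrt2\,B^8)/V(\tfrac{\sqrt2}{2}B^8)$ and for $n=24$ it is $m\le V(K+2B^{24})/V(B^{24})$.

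\emph{Step 2 (removing the factor $V(rB^n)$).} To sharpen Step~1 to the asserted $m\le V(K+2rB^n)$ I would periodise the configuration and invoke Theorems~\ref{E8} and~\ref{Leech}. Choose a lattice $\Lambda$ with $\Lambda\cap\big((K-K)+2r\,\mathrm{int}\,B^n\big)=\{o\}$; then $\{x_1,\dots,x_m\}+\Lambda+rB^n$ is again a packing, since for $\ell\in\Lambda\setminus\{o\}$ and all $i,j$ one has $\ell+x_i-x_j\notin 2r\,\mathrm{int}\,B^n$ (as $x_j-x_i\in K-K$), and it is $\Lambda$‑periodic representing exactly $m$ cosets. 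Its centre density is therefore $m/\det\Lambda$, so the sphere‑packing theorem in dimension $n$ forces $m\le\det\Lambda$. One then takes $\Lambda$ with $\det\Lambda$ as small as admissibility allows — a dilate of $E_8$ (resp.\ $\Lambda_{24}$) adapted to $K$, or a lattice supplied by Minkowski–Hlawka for the body $(K-K)+2r\,\mathrm{int}\,B^n$ — and checks $\det\Lambda\le V(K+2rB^n)$.

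The main obstacle is precisely this last covolume estimate: one must control the critical determinant of $(K-K)+2r\,\mathrm{int}\,B^n$, balancing the enlargement of $K$ into $K-K$ against the $2r$‑fattening applied to $K$ on the right‑hand side (this is where the shape of $K$, e.g.\ its being comparable to a ball in the applications, enters). Everything else — the disjointness of the balls, the inclusion $x_i+rB^n\subseteq K+2rB^n$, the periodisation bookkeeping, and the appeal to the density bound — is routine.
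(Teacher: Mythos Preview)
Your Step~1 is correct but, as you note, gives only the trivial estimate $m\le V(K+2rB^n)/V(rB^n)$, which falls short of the lemma by the factor $1/V(rB^n)\approx 3.94$ for $n=8$ and $\approx 518$ for $n=24$.

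Step~2 does not close this gap; in fact it \emph{cannot}. Take $K=RB^n$ with $R$ large --- exactly the case in which the lemma is applied later. Then $K-K=2RB^n$, so any admissible $\Lambda$ has $\lambda(\Lambda)\ge 2R+2r$, i.e.\ $\Lambda+(R+r)B^n$ is itself a sphere packing. Applying the very density bound you wish to invoke gives $\det\Lambda\ge V((R+r)B^n)/\delta_n$, and since $\Delta_n=1$ means $\delta_n=V(rB^n)$, one obtains
\[
\frac{\det\Lambda}{V(K+2rB^n)}\ \ge\ \frac{1}{V(rB^n)}\left(\frac{R+r}{R+2r}\right)^{n}\ \xrightarrow[R\to\infty]{}\ \frac{1}{V(rB^n)}.
\]
Thus the best your periodisation can ever deliver is $m\le\det\Lambda\sim V(K+2rB^n)/V(rB^n)$ --- precisely the trivial bound from Step~1 again. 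Neither a dilate of $E_8$ or $\Lambda_{24}$ nor Minkowski--Hlawka helps, because the obstruction is the passage $K\mapsto K-K$ (which doubles the linear scale for centred $K$), not the particular lattice chosen. The ``main obstacle'' you flag is therefore fatal to this route, and the hope that ball-like $K$ might save it is exactly backwards: balls are the worst case.

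The paper proceeds differently. It quotes a finite-packing inequality from B\"or\"oczky's monograph \emph{Finite Packing and Covering} (Theorem~9.4.1 there), which states directly that $m$ points in a convex $K$ at pairwise distance $\ge 2r$ satisfy $m\le\Delta_n\,V(K+2rB^n)$, with $\Delta_n$ the optimal centre density for balls of radius $r$. Since $\Delta_8=\Delta_{24}=1$ by Theorems~\ref{E8} and~\ref{Leech}, the lemma follows immediately. The proof of that inequality does not embed the finite set into a single periodic configuration and so avoids the $K\mapsto K-K$ loss altogether.
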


\begin{proof}
By directly applying the first upper bound in \cite[Theorem~9.4.1]{Boroczky}, one gets 
\[
\frac{m}{\delta_8\left( \frac{\sqrt{2}}{2} \right)} \le V(K + \sqrt{2}B^8),
\]
where we let $\delta_n(r) = \sup_{\Xi \colon \Xi + r B^n \text{ packing }} \delta(\Xi)$. As $\delta_8(1) = \frac{1}{16},$ by Viazovska's theorem, one obtains that $\delta_8(\sqrt{2}/2) = 1,$ the assertion is proved for dimension 8. For dimension 24, the result is entirely analogous. 
\end{proof}

\proof[Proof of Theorem~\ref{PeriodicPacking8stab0}]  Fix $R:=R_{\varepsilon}$ as in the statement of Theorem \ref{PeriodicPacking8stab0}. Let us consider a periodic packing $\Lambda+\widetilde{S}+\frac{\sqrt{2}}2\,B^8$ of balls of radius $\frac{\sqrt{2}}2$ where we also assume that $(\widetilde{S}-\widetilde{S})\cap \Lambda=\{0\}$ and 
\begin{equation}
\label{LambdaLarge}
\mbox{$\Lambda+2^{21}R\,B^8$ is a packing, and hence $\Lambda+2K$ is a packing.}
\end{equation} 

\noindent\textit{Step 1: discarding a small set.} According to the main result in \cite{Via17}, we have
$$
\frac{\# \widetilde{S}}{\det\Lambda}\leq 1.
$$
By assumption, the packing has almost optimal density; namely
$$
\frac{\# \widetilde{S}}{\det\Lambda}\geq 1-\varepsilon.
$$

We make the packing saturated: we choose
$S\supset\widetilde{S}$ such that  $\Lambda+S+\frac{\sqrt{2}}2\,B^8$ is still a packing, 
$(S-S)\cap \Lambda=\{0\}$
and
 for any $z\in \R^8$ there exists $x\in \Lambda+S$ with $\|x-z\|<\sqrt{2}$. Let $\# S=N$, thus
$$
1-\varepsilon \leq \frac{N}{\det\Lambda}\leq 1,
$$
and hence
$$
N-\# \widetilde{S}\leq \varepsilon\cdot \det\Lambda\leq  2\varepsilon N.
$$

As $g_8(0)=\hat{g}_8(0)=1$, it follows from \eqref{CohnElkies} that
$$
N+\sum_{z\in\Lambda,\;v,w\in S\atop
z+v-w\neq 0} g_8(\|z+v-w\|)\geq \frac{N^2 }{\det \Lambda}\geq (1-\varepsilon)N; 
$$
therefore,
$$
\#\left\{(z,v,w):z\in\Lambda\mbox{ and }v,w\in S
\mbox{ and }g_8(\|z+v-w\|)\leq -\sqrt{\varepsilon}\right\}
\leq N\sqrt{\varepsilon}.
$$
Let $S_0\subset S$ be the set of all $v\in S$ such that there exist $w\in S$ and $z\in \Lambda$ with
$g_8(\|v-(w-z)\|)\leq -\sqrt{\varepsilon}$. It follows that
\begin{eqnarray}
\label{S0upp}
\#S_0&\leq &N\cdot 2\sqrt{\varepsilon}\leq  2\sqrt{\varepsilon} \cdot \det\Lambda,\\
\label{SS0g8}
g_8(\|v-w+z\|)&>& -\sqrt{\varepsilon} \mbox{ \ for $v,w\in S\backslash S_0$ and $z\in\Lambda$.}
\end{eqnarray}

Let $\pi:\R^8\to \R^8/\Lambda$ be the projection map. Hence $\pi$ is injective on $S$ and on $K$ ({\it cf.}\eqref{LambdaLarge}).
Since $V(\R^8/\Lambda)=\det\Lambda$ and $\varepsilon<\frac1{R}$, we observe that 
\begin{eqnarray*}
\int_{\R^8/\Lambda}\#\left(\pi(S)\cap (x+\pi(K))\right)\,dx&=&
\int_{\R^8/\Lambda}\sum_{y\in\pi(S)}\mathbbm{1}_{x+\pi(K)}(y)\,dx=
\int_{\R^8/\Lambda}\sum_{y\in\pi(S)}\mathbbm{1}_{y-\pi(K)}(x)\,dx\\
&=&N\cdot V(K)\geq
(1-\varepsilon)V(\R^8/\Lambda)\cdot V(K)\\
&\geq&
\left(1-\frac1{R}\right)V(\R^8/\Lambda)\cdot V(K).
\end{eqnarray*}
Assuming $R$ is large enough, Lemma~\ref{finitepackigupp} 
and $RB^n\subset K$ yield, for any $x\in \R^8/\Lambda$,
$$
\#\left(\pi(S)\cap (x+\pi(K))\right)\leq V(K+\sqrt{2}\, B^8)\leq
\left(1+\frac{\sqrt{2}}{R}\right)^8\cdot V(K)<
\left(1+\frac{15}{R}\right)\cdot V(K).
$$
It follows from the last two estimates that, for $x\in  \R^8/\Lambda$,
$$
p=\P\left\{\#\left(\pi(S)\cap (x+\pi(K))\right)< \left(1-\frac4{\sqrt{R}}\right)V(K)\mbox{ for }x\in  \R^8/\Lambda\right\}
$$
satisfies
\begin{eqnarray*}
\left(1-\frac1{R}\right) V(K)&\leq &
p\cdot
\left(1-\frac4{\sqrt{R}}\right)V(K)+(1-p)\cdot
\left(1+\frac{15}{R}\right)V(K)\\
&=&\left(1+\frac{15}{R}\right)V(K)-p\cdot \left(\frac4{\sqrt{R}}+\frac{15}{R}\right)V(K)\\
&\leq & \left(1+\frac{15}{R}\right)V(K)-p\cdot \frac4{\sqrt{R}}\cdot V(K);
\end{eqnarray*}
therefore,
\begin{equation}
\label{PSK1}
\P\left\{\#\left(\pi(S)\cap (x+\pi(K))\right)\geq \left(1-\frac4{\sqrt{R}}\right)V(K)
\mbox{ for }x\in  \R^8/\Lambda\right\}\geq 1-\frac4{\sqrt{R}}.
\end{equation}

We observe that if $(x+K)\cap S_0\neq \emptyset$ then $x\in S_0-K$, and
\begin{equation}
\label{PSK2}
V\left(S_0-K\right)\leq 
N\cdot 2\sqrt{\varepsilon}\cdot V(K)\leq
N\cdot 2\sqrt{\varepsilon}\cdot (2^{20}R)^8\leq N\cdot \frac1{\sqrt{R}}
\end{equation}
if $\varepsilon$ is small; therefore, combining \eqref{PSK1} and \eqref{PSK2} yields that
$$
A=\left\{x\in  \R^8:\,\#\left((S+\Lambda)\cap (x+K)\right)\geq \left(1-\frac4{\sqrt{R}}\right)V(K)
\mbox{ and }(S_0+\Lambda)\cap (x+K)=\emptyset
\right\}
$$
satisfies the estimate
$$
\P\left\{x\in A\mbox{ for }x\in  \R^8\right\}=
\P\left\{x\in \pi(A)\mbox{ for }x\in  \R^8/\Lambda
\right\}\geq 1-\frac8{\sqrt{R}}.
$$
As we allow for a small set to be thrown away where we cannot ascertain that any structure will be preserved, we dispose of the complement of the set $A,$ and from now on we focus on this set. 

For $a\in A $, let $s_a\in S+\Lambda$ with $\|a-s_a\|\leq \sqrt{2}$. This exists by the saturatedness of the packing. We claim that there exists
a lattice $L$ depending on $a$ and $S+\Lambda$ with the following properties:
\begin{enumerate}
\item[(A)] For any $v\in (S+\Lambda)\cap (a+K)$, there exists $u\in L$ such that
$\|v-(s_a+u)\|\leq 2^{21}\alpha_8(2^{20}R)\varepsilon^{\frac14}$;
\item[(B)] $2^{-20}\leq \det L\leq 1+\frac8{\sqrt{R}}$;

\item[(C)] $L$ has a basis $w_1,\ldots,w_8$ such that
$\prod_{i=1}^8 \|w_i\|\leq  2^{14}\Big|\det[w_1,\ldots,w_8]\Big|$, and if
$i,j=1,\ldots,8$, then
$\frac18\leq \|w_i\|\leq  2^{56}$ 
and there exists $m_{ij}\in\Z$ with $m_{ii}\in 2\Z$   satisfying
$$
\Big|\langle w_i,w_j\rangle-m_{ij}\Big| \leq 2^{2^{46}}R^3\alpha_8(2^{20}R)\varepsilon^{\frac14}.
$$
\end{enumerate}

To construct $L$, it is convenient to shift $ (S+\Lambda)\cap (a+K)$ in a way such that $s_a$ ends up being the origin; therefore, let
$S'=\Big( (S+\Lambda)\cap (a+K)\Big)-s_a$ and $a'=s+a$, and hence
\begin{equation}
\label{Sprimeaprime}
0\in S'
\mbox{ \ and \ }
S'\subset a'+K \mbox{ where }{\rm diam}\,K\leq 2^{20}R
\mbox{ \ and \ } \# S'\geq \left(1-\frac4{\sqrt{R}}\right)V(K).
\end{equation}
\noindent\textit{Step 2: Constructing a first lattice close by $S'$.}  Since $(S_0+\Lambda)\cap(a+K)=\emptyset$, 
we deduce from \eqref{Sprimeaprime} and Lemma~\ref{tkg8} that  if $p_1,p_2,p_3\in S'$, then
there exist positive integers $k_1,k_2,k_3$ such that
\begin{equation} \label{eq:norms-comparison} 
\Big|\|p_1-p_2\|^2-2k_3\Big|\leq \alpha_8(2^{20}R)\varepsilon^{\frac14}, 
\Big|\|p_2-p_3\|^2-2k_1\Big|\leq \alpha_8(2^{20}R)\varepsilon^{\frac14}
\mbox{ and }
\Big|\|p_3-p_1\|^2-2k_2\Big|\leq \alpha_8(2^{20}R)\varepsilon^{\frac14},
\end{equation}
and hence
\begin{eqnarray}
\nonumber
\left|\langle p_1-p_3,p_2-p_3\rangle-(k_1+k_2-k_3)\right|&=&
\left|\frac{\|p_1-p_3\|^2+\|p_2-p_3\|^2-\|p_1-p_2\|^2}2-\frac{2k_1+2k_2-2k_3}2\right| \\
\label{scalarprodalpha8}
&\leq & 3\alpha_8(2^{20}R)\varepsilon^{\frac14}.
\end{eqnarray}

We now claim that there exist independent vectors $v_1,\ldots,v_8\in S'$ with the following properties:
\begin{enumerate}
\item[(a)] $\sqrt{2}\leq \|v_i\|\leq 3\sqrt{2}< 2^{5/2}$ for $i=0,\ldots,8$;
\item[(b)] $2^4\leq \Big|\det[v_1,\ldots,v_8]\Big|\leq  2^{20}$;
\item[(c)] $\prod_{i=1}^8\|v_i\|\leq 2^{16}\Big|\det[v_1,\ldots,v_8]\Big|$
\end{enumerate}
We construct $v_1,\ldots,v_8$ by induction on $i=1,\ldots,8$. For $v_1$, we choose any 
$v'_1\in\R^8$ with $\|v'_1\|=2\sqrt{2}$, and hence the saturatedness of the packing yields a 
$v_1\in (v'_1+\sqrt{2}\,B^8)\cap S'$. If we have already constructed $v_1,\ldots,v_{i-1}$ for $i\in\{2,\ldots,8\}$, then we take
any $v'_i$ orthogonal to $v_1,\ldots,v_{i-1}$ with $\|v'_i\|=2\sqrt{2}$, and hence the saturatedness of the packing 
again yields a 
$v_i\in (v'_i+\sqrt{2}\,B^8)\cap S'$. 

Now (a) follows from construction, and in turn (a) implies the upper bound in (b). For the lower bound in (b),
we observe that any $v_i$ is at least distance $\sqrt{2}$ from ${\rm lin}\{v_1,\ldots,v_{i-1}\}$ for $i\in\{2,\ldots,8\}$;
therefore, induction on the size of $j$ of $\{i_1,\ldots,i_j\}\subset \{1,\ldots,8\}$ yields  that
${\rm det}_{j}(v_{i_1},\ldots,v_{i_j})\geq \sqrt{2}^j$. Finally, (c) follows from (a) and (b).

Readily, $v_1,\ldots,v_8\in S'$. We consider the lattice
$$
L_0=\{u\in \R^8:\,\langle u,v_i\rangle\in\Z \mbox{ for }i=1,\ldots,8\},
$$
dual to the lattice $\Z v_1+\ldots +\Z v_8$. Choose a basis $u_1,\ldots,u_8\in L_0$ of $L_0$
such that for $i,j=1,\ldots,8$, we have $\langle u_j,v_i\rangle=0$ if $i\neq j$ and 
$\langle u_i,v_i\rangle=1$.  
We observe that Lemma~\ref{dualbasis}, (a) and (c) yield that if $i=1,\ldots,8$, then
\begin{equation}
\label{uinormupp}
\|u_i\|\leq 2^{16}.
\end{equation}

If $x\in\R^8$, then (a) and \eqref{uinormupp}  yield
\begin{eqnarray}
\label{xnormuiviup}
\|x\|&=&\left\|\sum_{i=1}^8 \langle x,v_i\rangle\cdot u_i\right\|\leq 
2^{19}\max_{i=1,\ldots,8}|\langle x,v_i\rangle|\\
\label{xnormuivilow}
\|x\|&\geq& \max_{i=1,\ldots,8}\frac{|\langle x,v_i\rangle|}{\|v_i\|}\geq
\frac18\cdot \max_{i=1,\ldots,8}|\langle x,v_i\rangle|,
\end{eqnarray}
and hence
\begin{eqnarray}
\label{L0normlow}
\|u\|&\geq &\frac18\mbox{ \ for }u\in L_0\backslash\{0\},\\
\label{L0detlow}
 \det L_0&=&\left(\det[v_1,\ldots,v_8]\right)^{-1}\geq 2^{-20}.
\end{eqnarray}

Since $(S_0+\Lambda)\cap (a+K)=\emptyset$, it follows from \eqref{scalarprodalpha8} that for any 
$v\in S'$, there exist integers $\ell_{i}$,  $i=1,\ldots,8$ such that
$|\langle v,v_i\rangle-\ell_{i}|\leq  3\alpha_8(2^{20}R)\varepsilon^{\frac14}$, and hence
\eqref{xnormuiviup} yields that
$u=\sum_{i=1}^8 \ell_{i}u_i\in L_0$ satisfies
\begin{equation}
\label{SclosetoL0}
\|v-u\|\leq 2^{19}\cdot 3\alpha_8(2^{20}R)\varepsilon^{\frac14}\leq 2^{21}\alpha_8(2^{20}R)\varepsilon^{\frac14}.
\end{equation}
In particular, if $i=1,\ldots,8$, then there exists $\tilde{u}_i\in L_0$ with
\begin{equation}
\label{vitildeui}
\|v_i-\tilde{u}_i\|\leq 2^{21}\alpha_8(2^{20}R)\varepsilon^{\frac14}.
\end{equation}

We consider now the sublattice 
$$
\widetilde{L}=\sum_{i=1}^8\Z\,\tilde{u}_i\subset L_0.
$$
It follows from (a), (b), (c) and Lemma~\ref{closedeterminant} that
\begin{enumerate}
\item[(a')] $1\leq \|\tilde{u}_i\|\leq 8$ for $i=0,\ldots,8$;
\item[(b')] $8\leq \Big|\det[\tilde{u}_1,\ldots,\tilde{u}_8]\Big|=\det \widetilde{L}\leq 2^{21}$;
\item[(c')] $\prod_{i=1}^8\|\tilde{u}_i\|\leq 2^{17}\Big|\det[\tilde{u}_1,\ldots,\tilde{u}_8]\Big|$.
\end{enumerate}

Let $\bar{S}\subset L_0$ be the set of all $u\in L_0$ such that
there exists $v\in S'$ with 
$\|v-u\|\leq 2^{21}\alpha_8(R)\varepsilon^{\frac14}$. Thus
$\tilde{u}_1,\ldots,\tilde{u}_8\in \bar{S}$ and
\eqref{SclosetoL0} defines a bijective correspondence between $S'$ and $\bar{S}$
(compare \eqref{L0normlow}). We then finally let $L\subset L_0$ be the
sublattice generated by $\bar{S}$ as an Abelian subgroup.  We deduce from
\eqref{SclosetoL0} that $L$ satisfies (A) and
$$
\widetilde{L}\subset L\subset L_0.
$$

An LLL 
basis $w_1,\ldots,w_8$ of $L$ satisfies $\|w_1\|\cdot\ldots\cdot \|w_8\|\leq 2^{14}\det L$
where $\|w_i\|\geq \frac18$ by \eqref{L0normlow}; therefore,
we deduce from \eqref{L0normlow}, $\det L\geq \det L_0$ and (c') that
\begin{enumerate}
\item[(i)] $2^{-20}\leq \det L\leq \Big|\det[\tilde{u}_1,\ldots,\tilde{u}_8]\Big|
\leq  2^{21}$;
\item[(ii)] $\prod_{i=1}^8 \|w_i\|\leq  2^{14}\Big|\det[w_1,\ldots,w_8]\Big|$;
\item[(iii)] $\frac18\leq \|w_i\|\leq 2^{35}8^7= 2^{56}$ for $i=1,\ldots,8$.
\end{enumerate}
It follows from (i) and (b') that
\begin{equation}
\label{LperL0card}
\#(L/\widetilde{L})=\frac{\det\widetilde{L}}{\det L} \leq 2^{41}.
\end{equation}

\noindent\textit{Step 3: Showing the lattice is close to $E_8.$} Next we claim that for any $u\in L\cap (a'+ K)$, there exist
$s_1,\ldots,s_\ell\in \bar{S}$ and 
$\xi_1,\ldots,\xi_\ell\in \{-1,1\}$
with
\begin{eqnarray}
\label{Laproxsum}
u&=&\sum_{i=1}^\ell \xi_i s_i\\
\label{Lapproxell}
\ell&\leq & 2^{2^{44}}R.
\end{eqnarray}
As a first step towards proving \eqref{Laproxsum} and \eqref{Lapproxell}, we verify that
for any $u\in L\cap (a'+[-R,R]^n)$, there exist
$s'_1,\ldots,s'_{\ell'}\in \bar{S}$ and 
$\xi'_1,\ldots,\xi'_{\ell'}\in \{-1,1\}$ such that

\begin{eqnarray}
\label{Laproxsum0}
\sum_{i=1}^{\ell'} \xi'_i s'_i&\in& u+\widetilde{L}\\
\label{Lapproxell0}
\ell'&\leq &  2^{\#(L/\widetilde{L})-1}.
\end{eqnarray}
Let $\bar{S}^{(1)}$ be the image of $\bar{S}$ in $L/\widetilde{L}$, and hence
$\bar{S}^{(1)}$ generates the Abelian group $L/\widetilde{L}$.
For $i\geq 1$, we define $S^{(i+1)}=S^{(i)}-S^{(i)}$ by induction on $i$. 
In particular, any element of $S^{(i)}$ is of the form
$$
\pm t_1\pm\ldots \pm t_{2^{i-1}}
$$
for $t_1,\ldots, t_{2^{i-1}}\in S^{(1)}$.
If $S^{(i+1)}=S^{(i)}$ for $i\geq 1$,  then 
$S^{(i)}$ is a subgroup of $L/\widetilde{L}$, and hence 
$\bar{S}^{(1)}\subset \bar{S}^{(i)}$ yields that 
$S^{(i)}=L/\widetilde{L}$. Therefore,
$S^{(\#(L/\widetilde{L}))}=L/\widetilde{L}$, completing the proof of 
\eqref{Laproxsum0} and \eqref{Lapproxell0}. 

For any $u\in L\cap (a'+ [-R,R]^8])$, let us consider
$s'_1,\ldots,s'_{\ell'}\in \bar{S}$ and 
$\xi'_1,\ldots,\xi'_{\ell'}\in \{-1,1\}$ provided by  \eqref{Laproxsum0}
 and \eqref{Lapproxell0} such that
$$
u=\sum_{i=1}^{\ell'} \xi'_i s'_i+w
\mbox{ \ where $w\in\widetilde{L}$ and $\ell'\leq  2^{\#(L/\widetilde{L})-1}$. }
$$
As ${\rm diam}\,K\leq 2^{20}R$, we deduce that
$\|w\|\leq  2^{\#(L/\widetilde{L})}\cdot 2^{20}R\leq 2^{2^{42}}R$.
Now $w=\sum_{i=1}^8\lambda_i \tilde{u}_i$ where
Lemma~\ref{dualbasis}, (a') and (c')  yield that
$$
\sum_{i=1}^8|\lambda_i|\leq 8\cdot 2^{17}\|w\| \leq 2^{2^{43}}R
$$
and in turn we conclude the claims \eqref{Laproxsum} and \eqref{Lapproxell}.

Next we claim that if $i,j=1,\ldots,8$, then there exists $m_{ij}\in\Z$ with $m_{ii}\in 2\Z$ such that
\begin{equation}
\label{wiwjsmall}
\Big|\langle w_i,w_j\rangle-m_{ij}\Big| \leq 2^{2^{46}}R^3\alpha_8(2^{20}R)\varepsilon^{\frac14}.
\end{equation}
We deduce from  \eqref{Laproxsum} and \eqref{Lapproxell} that 
if $i=1,\ldots,8$, then there exist
$s_{i1},\ldots,s_{i\ell_i}\in \bar{S}$ and 
$\xi_{i1},\ldots,\xi_{i\ell_i}\in \{-1,1\}$
with
\begin{eqnarray*}
w_i&=&\sum_{r=1}^{\ell_i} \xi_{ir} s_{ir}\\
\ell_i&\leq & 2^{2^{44}}R.
\end{eqnarray*}
Next, the definition of $\bar{S}$ (compare \eqref{SclosetoL0}) yields that 
if $i=1,\ldots,8$ and $r=1,\ldots,\ell_i$,
 then there exists $\sigma_{ir}\in S'$ satisfying
$$
\|s_{ir}-\sigma_{ir}\|\leq 2^{19}\alpha_8(2^{20}R)\varepsilon^{\frac14},
$$
where $\|s_{ir}\|,\|\sigma_{ir}\|\leq 2^{20}R$ (we have assumed that $RB^n\subset K$).
It follows from \eqref{scalarprodalpha8} that
if $i,j=1,\ldots,8$, $r=1,\ldots,\ell_i$ and
$t=1,\ldots,\ell_j$, then there exists
 $k_{ir;jt}\in\Z$ with $k_{ir;ir}\in 2\Z$ such that
$$
\Big|\langle \sigma_{ir},\sigma_{jt}\rangle-k_{ir;jt}\Big|\leq  3\alpha_8(2^{20}R)\varepsilon^{\frac14}.
$$
We deduce that if $i,j=1,\ldots,8$, $r=1,\ldots,\ell_i$ and
$t=1,\ldots,\ell_j$, then
$$\Big|\langle s_{ir},s_{jt}\rangle-k_{ir;jt}\Big|\leq 
\Big|\langle s_{ir},s_{jt}\rangle-\langle \sigma_{ir},\sigma_{jt}\rangle\Big|+
\Big|\langle \sigma_{ir},\sigma_{jt}\rangle-k_{ir;jt}\Big| 
$$
$$\leq  2^{40}R\alpha_8(2^{20}R)\varepsilon^{\frac14}+
3\alpha_8(2^{20}R)\varepsilon^{\frac14}\leq  2^{41}R\alpha_8(2^{20}R)\varepsilon^{\frac14}.
$$
Therefore, if $i,j=1,\ldots,8$, then there exists an integer $m_{ij}$ such that
$$
\Big|\langle w_i,w_j\rangle-m_{ij}\Big| \leq \ell_i\cdot\ell_j\cdot 2^{41}R\alpha_8(2^{20}R)\varepsilon^{\frac14}\leq
2^{2^{46}}R^3\alpha_8(2^{20}R)\varepsilon^{\frac14},
$$
where $m_{ii}$ is even, proving the claim \eqref{wiwjsmall}. In turn, (ii), (iii) and \eqref{wiwjsmall}
imply (C).

The lower bound in (B) follows from the lower bound in (i). To prove the upper bound in (B), 
let $D=\{x\in\R^8:\|x\|\leq \|x-u\|\mbox{ for }u\in L\}$ be the Dirichlet-Voronoi cell of $L$. Hence $L+D$ is a tiling, and
$$
V(D)=\det L.
$$
It follows from the definition of $\bar{S}\subset L$ and the saturatedness of the packing $\Lambda+S$ that
for any $x\in 4B^n$, there exists $u\in \bar{S}$ with $\|x-u\|\leq 2$; therefore,
$$
D\subset 2B^n.
$$
We deduce using \eqref{Sprimeaprime}, $\bar{S}\subset L$ 
and $V(\bar{S}+D)=\#\bar{S}\cdot V(D)$
that
$$
\left(1-\frac4{\sqrt{R}}\right)V(K)\cdot V(D)\leq
V(\bar{S}+D)\leq V\left(K+2B^n\right)\leq \left(1+\frac2{R}\right)^8V(K),
$$
and hence
$$
\Big|\det[w_1,\ldots,w_8]\Big|=\det L=V(D)\leq 1+\frac8{\sqrt{R}}.
$$
With this inequality, we have completed the proof of (A), (B) and (C).\\

\noindent\textit{Step 4: Conclusion.} We deduce from (B), (C), Lemma~\ref{closedeterminant} and Lemma~\ref{closeintegral}
that  there exists a even unimodular lattice $\Lambda_{(a)}$ with basis
$z_1,\ldots,z_8$ such that
$$
\|w_i-z_i\|=
O\Big(R^3\alpha_8(2^{20}R)\varepsilon^{\frac14}\Big) \mbox{ \ for }i=1,\ldots,8.
$$
In particular, in dimension 8 we can promptly conclude that $\Lambda_{(a)}=\Phi_{(a)} E_8$ for an orthogonal transformation
$\Phi_{(a)}\in O(8)$. For any $x\in a'+K$, if $x=\sum_{i=1}^8\lambda_iw_i$, then Lemma~\ref{dualbasis} and (C) yield that
$$
|\lambda_i|= O(R)\mbox{ \ for $i=1,\ldots,8$.}
$$
It follows that there exists
$Z\subset E_8$ such that
$$
d_H(\bar{S},\Phi_{(a)} Z)=O\Big(R^4\alpha_8(2^{20}R)\varepsilon^{\frac14}\Big),
$$
and hence \eqref{SclosetoL0} yields that 
$$
d_H(S',\Phi_{(a)} Z)=O\Big(R^4\alpha_8(2^{20}R)\varepsilon^{\frac14}\Big).
$$
Finally, notice that, from this construction, points in $S'$ are in bijection to those of $\Phi_{(a)}Z.$ Moreover, we know that $\left(1+\frac{10}{R}\right) V(K) \ge \# S' \ge \left( 1 - \frac{4}{\sqrt{R}} \right) V(K),$ and thus we conclude that 
\[
\left|\frac{\#(\Phi_{(a)} Z)}{\#((\Phi_{(a)} E_8) \cap (a+K+\sqrt{2}B^8))} - 1 \right| \le \frac{300}{\sqrt{R}}.
\]
By using the explicit bound on $\alpha_8(\cdot)$ given in Lemma \ref{tkg8} and choosing $R = \frac{\log \left(\frac{1}{\varepsilon}\right)}{\log \log \left( \frac{1}{\varepsilon}\right)},$ we obtain all the claims of Theorem~\ref{PeriodicPacking8stab0}. 
\endproof

\begin{proof}[Proof of Theorem~\ref{PeriodicPacking24stab0}] As the proof of Theorem~\ref{PeriodicPacking24stab0} is, in technical terms, almost identical to that of Theorem~\ref{PeriodicPacking8stab0}, we only highlight the outcome of each step.  Suppose, thus, that $\Lambda + S + B^{24}$ is a packing. Then:\\

\noindent\textit{Step 1: discarding a small set.} In this first step, we also obtain an exceptional set $S_0 \subset S$ such that $\# S_0 \le 2 \det \Lambda \sqrt{\varepsilon}$ and $g_{24}(\|v-w + z\|) > - \sqrt{\varepsilon}$ for $v,w \in S \backslash S_0$ and $z \in \Lambda.$ 
		
Moreover, we also obtain  that the set $A \subset \R^{24}$ where $\#((S + \Lambda) \cap (x + K)) \ge \left( 1 - \frac{4}{\sqrt{R}}\right) V(K)$ and $(S_0 + \Lambda) \cap (x+K) = \emptyset$ for each $x \in A$ has probability at least $1 - \frac{8}{\sqrt{R}}.$ \\
		
\noindent\textit{Step 2: constructing a sublattice close by $S'$.} Subsequently, we may exploit the fact that $ g_{24}(\|v-w + z\|) > - \sqrt{\varepsilon}$ for $v,w \in S \backslash S_0$ and $z \in \Lambda$ together with Lemma \ref{tkg8} in order to conclude that the origin-translated version $S'$ of $(S + \Lambda) \cap (a+K)$ satisfies analogues of \eqref{eq:norms-comparison} and \eqref{scalarprodalpha8}. The crucial difference here is that the integers $k_i,i=1,2,3$ in \eqref{eq:norms-comparison} are taken to be \emph{at least} 2.\\
		
\noindent\textit{Step 3: proving the lattice is close to an even, unimodular lattice in dimension 24.} Analogously as in Step 3 in the proof of Theorem~\ref{PeriodicPacking8stab0} above, one also obtains the existence of a lattice $L$ depending on $a$ and $S+\Lambda$ so that 
		
		\begin{enumerate}
			\item[(A)] For any $v\in (S+\Lambda)\cap (a+K)$, there exists $u\in L$ such that
			$\|v-(s_a+u)\|\leq 2^{10^{10}}\alpha_{24}(2^{10^{10}}R)\varepsilon^{\frac14}$;
			\item[(B)] $2^{-2000}\leq \det L\leq 1+\frac8{\sqrt{R}}$;
			
			\item[(C)] $L$ has a basis $w_1,\ldots,w_{24}$ such that
			$\prod_{i=1}^{24}\|w_i\|\leq  2^{3000}\Big|\det[w_1,\ldots,w_{24}]\Big|$, and if
			$i,j=1,\ldots,24$, then
			$\frac{1}{2^{1000}}\leq \|w_i\|\leq  2^{1000}$ 
			and there exists $m_{ij}\in\Z$ with $m_{ii}\in 2\Z$   satisfying
			$$
			\Big|\langle w_i,w_j\rangle-m_{ij}\Big| \leq 2^{2^{2^{10}}}R^{15}\alpha_{24}(2^{10^{10}}R) \varepsilon^{\frac14}.
			$$
		\end{enumerate}
	\vspace{2mm}

\noindent\textit{Step 4: conclusion.} Finally, one concludes that there is an even, unimodular lattice $\Lambda_{(a)}$ close to the lattice $L$ constructed above, where closeness is to be understood in the sense of two reduced bases of each being close to one another in norm. As in Step 4, we obtain that there exists $X_{(a)} \subset \Lambda_{(a)}$  such that $d_H(S',X_{(a)} \cap (a+K)) = O(R^{20} \alpha_{24}(2^{10^{10}} R)\varepsilon^{\frac{1}{4}} ).$ Moreover, we have that 
	   
	   \begin{equation}\label{eq:X_a-occupy}
	   \left| \frac{\#(X_{(a)} )}{\#(\Lambda_{(a)} \cap (a+K+2B^{24}))} - 1 \right|\le \frac{300}{\sqrt{R}}.
	   \end{equation}

Thus, we must only conclude that $\Lambda_{(a)}$ is congruent to the Leech lattice.  In order to do so, we shall prove that $\lambda(\Lambda_{(a)}) \ge 2,$ which characterizes the Leech lattice uniquely among the 24 (classes of equivalence of) distinct even, unimodular lattices in dimension 24. In fact, suppose not, that is $\lambda(\Lambda_{(a)}) = \sqrt{2}.$ We then claim that there are two vectors $z_1,z_2 \in X_{(a)} \cap (a+K)$ with $\|z_1  - z_2\| = \sqrt{2}.$ 
	   
	   Indeed, if that were not the case, associate to each $z \in X_{(a)} \cap (a+K)$ the set $\mathcal{V}(z) = \{ z' \in \Lambda_{(a)} \colon \|z-z'\| = \sqrt{2} \}.$ By assumption, $\mathcal{V}(z) \neq \emptyset, \forall z \in X_{(a)}.$ 
	   
	   But recall that we have supposed additionally that $\|z_i-z_j\| \ge 2$ whenever $z_i,z_j \in X_{(a)} \cap (a+K).$  Let then $\tilde{z}\in \mathcal{V}(z_1) \cap \mathcal{V}(z_2), z_1,z_2 \in X_{(a)} \cap (a+K).$ Then $|z_1 - z_2| \le 2\sqrt{2}.$ Consider then a sublattice $\Lambda_{(a)}' \subset \Lambda_{(a)}$ such that $\lambda(\Lambda_{(a)}') = 4,$ and $\#(\Lambda_{(a)}/\Lambda_{(a)}') \le 2^{100}.$ Define then $X_{(a)}'$ to be the points in $X_{(a)} \cap \Lambda_{(a)}' \cap (a+K)$ at distance at least $2$ from $\partial(a + K).$ If $\varepsilon$ is small enough, and hence $V(K)$ is sufficiently large, we have
	   \[
	   \#(X_{(a)}') \ge \frac{1}{2^{200}} V(K).
	   \]
	   Notice that now, if $z_1,z_2 \in X_{(a)}',$ the sets $\mathcal{V}(z_1)$ and $\mathcal{V}(z_2)$ are \emph{disjoint.} Moreover, by construction, $\mathcal{V}(z) \subset a+K$ if $z \in X_{(a)}'.$ Hence, by this and the assumption that there are no two points in $X_{(a)}$ at distance $\sqrt{2}$ from one another, 
	   \[
	   \# ((a+K) \cap (\Lambda_{(a)} \setminus X_{(a)})) \ge  \#\left( \bigcup_{z \in X_{(a)}'} \mathcal{V}(z) \right) \ge 2 \#(X_{(a)}') \ge 2^{-199} V(K). 
	   \]
	   For $\varepsilon$ small enough, this contradicts the fact that $X_{(a)}$ occupies almost all of the space of $\Lambda_{(a)}$ in $a+K,$ reflected in \eqref{eq:X_a-occupy}. Thus, there are two points $z_1, z_2 \in X_{(a)} \cap (a+K)$ with $\|z_1-z_2\| = \sqrt{2},$ as claimed. 
	   
	   This, however, leads to another contradiction: for each $z \in X_{(a)} \cap (a+K)$ there is a unique element $v \in S'$ so that $\|v-z\| = O(R^{20} \alpha_{24}(2^{10^{10}} R)\varepsilon^{\frac{1}{4}} ) < \frac{1}{20}.$ Let $v_1,v_2 \in S'$ be the corresponding such elements to $z_1,z_2 \in X_{(a)} \cap (a+K)$ found above. We would have $\|v_1 - v_2\| \le \sqrt{2} + \frac{1}{10},$ while \eqref{eq:norms-comparison} in the 24-dimensional case, as remarked above, gives that $\|v_1 - v_2\| > 2 - \frac{1}{10}$ for $\varepsilon$ sufficiently small. This contradiction stems from supposing that $\lambda(\Lambda_{(a)}) = \sqrt{2},$ hence $\lambda(\Lambda_{(a)}) \ge 2,$ and therefore $\Lambda_{(a)}$ is congruent to the Leech lattice, as desired.  
	
\end{proof} 

\section{Corollaries about bin packings and general packings}
\label{secbin}

This section is devoted to a series of generalizations of our main results, in particular, to the contexts of \emph{bin packings} and \emph{general packings}. We start by discussing a version of our results for bin packings. In more precise words, these are pakings of congruent spheres in a large convex container $C$. Our main result regarding such types of packings is as follows.

\begin{theorem} 
	\label{BinPacking8stab}
	There exist explicitly computable values $\tilde{\varepsilon}_8, \tilde{c}_8 >0$ such for   
	$\varepsilon\in(0,\tilde{\varepsilon}_8)$,
	$R_{\varepsilon} = \frac{|\log \varepsilon|}{\log |\log \varepsilon|}$ and
	a convex body $C$ in $\R^8$
	with $r(C)\geq \varepsilon^{-2}$,
	the following property holds:
	
	If a packing $\Xi+\frac{\sqrt{2}}2\,B^8\subset C$ of balls  satisfies $\# \Xi\geq (1-\varepsilon)V(C)$, and
	$K$ is a convex body containing a ball of radius $R_\varepsilon$ and
	having diameter ${\rm diam}\,K\leq 2^{20}R_\varepsilon$, then 
	with probability at least $1-\tilde{c}_8R_\varepsilon^{-\frac{1}2}$ with respect to the uniform density in $C$, $x\in C$ satisfies that 
	$\# (\Xi\cap (x+K))\geq (1-\tilde{c}_8R_\varepsilon^{-\frac{1}2})V(K)$ and
	$$
	d_H\Big(\Phi Z,\Xi\cap (x+K)\Big)\leq \varepsilon^{\frac{1}{9}}
	\mbox{ where $Z\subset E_8$ and $\Phi$ is an isometry of $\R^8$.}
	$$ 
	Moreover, the set $Z$ has small gaps when compared to a localized version of $E_8,$ in the sense that 
	\[
	\left|\frac{\#(\Phi Z)}{\#((\Phi E_8) \cap (x+K))} - 1 \right| \le \frac{C}{\sqrt{R_{\varepsilon}}},
	\]
	for some absolute computable constant $C>0.$ 
\end{theorem}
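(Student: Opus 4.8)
The plan is to deduce Theorem~\ref{BinPacking8stab} from the periodic case, Theorem~\ref{PeriodicPacking8stab0}, by cutting the container $C$ into cubes at an intermediate scale $\ell$ and periodizing each cube separately. I would take $\ell:=\varepsilon^{-1}$, which is admissible since the hypothesis $r(C)\ge\varepsilon^{-2}$ guarantees that $C$ contains many disjoint cubes of side $\ell$, while $\ell\gg 2^{20}R_\varepsilon$. Tile $\R^8$ by the cubes $Q_z=\ell z+[0,\ell]^8$, $z\in\Z^8$, and set $\mathcal{I}=\{z:Q_z\subseteq C\}$. Since $C$ is convex with inradius $\ge\varepsilon^{-2}$, one has $\mathrm{surf}(C)\le\frac{8}{r(C)}V(C)\le 8\varepsilon^2 V(C)$, so the boundary layer $C\setminus\bigcup_{z\in\mathcal I}Q_z$ has volume $O(\ell\varepsilon^2 V(C))=O(\varepsilon V(C))$; combined with Lemma~\ref{finitepackigupp} (which gives $\#(\Xi\cap Q_z)\le\ell^8(1+16/\ell)$ for every $z$) and $\#\Xi\ge(1-\varepsilon)V(C)$, a Markov-type averaging over the $\#\mathcal I=(1+O(\varepsilon))V(C)/\ell^8$ cubes shows that, with $\eta:=16/\ell+O(\varepsilon)=\Theta(\varepsilon)$, the set $\mathcal G\subseteq\mathcal I$ of \emph{good} indices — those $z$ with $\#(\Xi\cap Q_z)\ge\ell^8(1-\sqrt\eta)$ — satisfies $\#(\mathcal I\setminus\mathcal G)\le\sqrt\eta\,\#\mathcal I$.

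For a good $z$ I would translate $\Xi\cap Q_z$ into $[0,\ell]^8$ and periodize it by the lattice $\Lambda_\ell:=(\ell+\sqrt 2)\Z^8$. This yields a genuine periodic packing $\Xi_z+\frac{\sqrt 2}{2}B^8$: indeed $(-\ell,\ell)^8\cap\Lambda_\ell=\{0\}$, so $(\Xi_z-\Xi_z)\cap\Lambda_\ell=\{0\}$, and every nonzero $v\in\Lambda_\ell$ has a coordinate of absolute value $\ge\ell+\sqrt 2$, which forces any two distinct periodized centres to be at distance $\ge\sqrt 2$. Its centre density is $\#(\Xi\cap Q_z)/(\ell+\sqrt 2)^8\ge(1-\sqrt\eta)(1+\sqrt 2/\ell)^{-8}\ge 1-\varepsilon'$ with $\varepsilon':=2\sqrt\eta=\Theta(\sqrt\varepsilon)$. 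I would then run the proof of Theorem~\ref{PeriodicPacking8stab0} on each $\Xi_z$, $z\in\mathcal G$, keeping the frame scale equal to the \emph{given} $R:=R_\varepsilon$ rather than $R_{\varepsilon'}$ — this is the ``zooming out'' allowed by the remark following that theorem, and every estimate in its proof survives with $\varepsilon'$ in place of $\varepsilon$ and $R=R_\varepsilon$, the only point to check being $\sqrt{\varepsilon'}\,(2^{20}R_\varepsilon)^8\sqrt{R_\varepsilon}\to 0$, which holds since $\varepsilon'=\Theta(\varepsilon^{1/2})$ and $R_\varepsilon=|\log\varepsilon|/\log|\log\varepsilon|$. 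This produces, for each good $z$, a subset of $\R^8/\Lambda_\ell$ of relative measure $\ge 1-cR_\varepsilon^{-1/2}$ of points $x$ for which $\#(\Xi_z\cap(x+K))\ge(1-cR_\varepsilon^{-1/2})V(K)$, $d_H(\Phi Z,\Xi_z\cap(x+K))\le c\,R_\varepsilon^{4}\alpha_8(2^{20}R_\varepsilon)(\varepsilon')^{1/4}$ with $Z\subseteq E_8$ and $\Phi$ an isometry, and $|\#(\Phi Z)/\#((\Phi E_8)\cap(x+K))-1|\le cR_\varepsilon^{-1/2}$.

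It remains to transfer this to $C$. Whenever $z\in\mathcal G$ and $x+K\subseteq Q_z$ — which excludes, inside each good cube, a collar of relative volume $O(R_\varepsilon/\ell)$ — one has $\Xi_z\cap(x+K)=\Xi\cap(x+K)$ and $(\Phi E_8)\cap(x+K)$ is unchanged, so all three conclusions above hold verbatim for the original packing. Hence the set of ``bad'' $x\in C$ — those in the boundary layer, in a non-good cube, in the collar of a good cube, or in the exceptional set of Theorem~\ref{PeriodicPacking8stab0} for its cube — has relative volume in $C$ at most $O(\varepsilon)+\sqrt\eta+O(R_\varepsilon/\ell)+O(R_\varepsilon^{-1/2})\le\tilde c_8 R_\varepsilon^{-1/2}$, since $\sqrt\eta=\Theta(\sqrt\varepsilon)$ and $R_\varepsilon/\ell=R_\varepsilon\varepsilon$ are both $o(R_\varepsilon^{-1/2})$; for every other $x$ the displayed bounds hold, and the explicit estimate $\alpha_8(R)\le\rho_0 R^{3/2}e^{\frac54\pi R}$ of Lemma~\ref{tkg8} together with $\varepsilon'=\Theta(\varepsilon^{1/2})$ gives $R_\varepsilon^{4}\alpha_8(2^{20}R_\varepsilon)(\varepsilon')^{1/4}=\varepsilon^{-o(1)}\varepsilon^{1/8}=\varepsilon^{1/8-o(1)}\le\varepsilon^{1/9}$ once $\varepsilon<\tilde\varepsilon_8$. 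The step I expect to be most delicate is this last one: choosing the single scale $\ell\sim\varepsilon^{-1}$ — available precisely because $r(C)\ge\varepsilon^{-2}$ — so that the boundary loss $\ell\varepsilon^2$, the periodization loss $1/\ell$ and the collar loss $R_\varepsilon/\ell$ are \emph{simultaneously} negligible against $R_\varepsilon^{-1/2}$, while the gain $(\varepsilon')^{1/4}=\varepsilon^{1/8}$ still beats the target $\varepsilon^{1/9}$ after absorbing the factor $e^{cR_\varepsilon}=\varepsilon^{-o(1)}$; and, tied to it, keeping the four disjoint sources of bad points separated through the passage from the individual tori $\R^8/\Lambda_\ell$ to the uniform measure on $C$.
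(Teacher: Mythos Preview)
Your proposal is correct and follows essentially the same route as the paper's proof: cut $C$ into cubes of side $\varepsilon^{-1}$, use a Markov argument to isolate the ``good'' cubes where the packing is $(1-O(\sqrt{\varepsilon}))$-dense, periodize each good cube by the lattice $(\varepsilon^{-1}+\sqrt{2})\Z^8$, apply Theorem~\ref{PeriodicPacking8stab0} with the frame scale $R_\varepsilon$, and then reassemble while discarding boundary layers, bad cubes, collars, and the exceptional sets from the periodic theorem. Your accounting of the four sources of bad points and the final passage from $(\varepsilon')^{1/4}=\varepsilon^{1/8}$ to $\varepsilon^{1/9}$ via $e^{cR_\varepsilon}=\varepsilon^{-o(1)}$ matches the paper exactly.
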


\begin{theorem} 
	\label{BinPacking24stab}
	There exist explicitly computable values $\tilde{\varepsilon}_{24}, \tilde{c}_{24} >0$ such for  
	$\varepsilon\in(0,\tilde{\varepsilon}_{24})$, $R_{\varepsilon} = \frac{|\log \varepsilon|}{\log |\log \varepsilon|}$
	and a convex body $C$ in $\R^{24}$  
	with $r(C)\geq \varepsilon^{-2}$, the following property holds:
	
	If a packing $\Xi+B^{24}\subset C$ of balls  satisfies $\# \Xi\geq (1-\varepsilon)V(C)$, and
	$K$ is a convex body containing a ball of radius $R_\varepsilon$ and
	having diameter ${\rm diam}\,K\leq 2^{140}R_\varepsilon$, then 
	with probability at least $1-\tilde{c}_{24}R_\varepsilon^{-\frac{1}2}$ with respect to the uniform density in $C$, $x\in C$ satisfies that 
	$\# (\Xi\cap (x+K))\geq (1-\tilde{c}_{24}R_\varepsilon^{-\frac{1}2})V(K)$ and
	$$
	d_H\Big(\Phi Z,\Xi\cap (x+K)\Big)\leq \varepsilon^{\frac{1}{9}}
	\mbox{ where $Z\subset \Lambda_{24}$ and $\Phi$ is an isometry of $\R^{24}$.}
	$$ 
	 Moreover, the set $Z$ has small gaps when compared to a localized version of $\Lambda_{24},$ in the sense that 
	\[
	\left|\frac{\#(\Phi Z)}{\#((\Phi \Lambda_{24}) \cap (x+K))} - 1 \right| \le \frac{C'}{\sqrt{R_{\varepsilon}}},
	\]
	where $C' > 0$ is an absolute computable constant. 
\end{theorem}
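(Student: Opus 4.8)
The plan is to derive Theorem~\ref{BinPacking24stab} from the periodic statement Theorem~\ref{PeriodicPacking24stab0} by periodizing the packing in $C$ at an intermediate length scale. I would fix $L=\varepsilon^{-1}$ --- any scale with ${\rm diam}\,K\ll L$ and $L^{2}\ll r(C)$ would do --- and, after a harmless translation of the grid $L\Z^{24}$, let $\mathcal Q$ be the family of cubes $Q_{v}=v+[0,L]^{24}$, $v\in L\Z^{24}$, that are contained in $C$. The first point to record is that, since $r(C)\ge\varepsilon^{-2}$, the containment $C\ominus tB^{24}\supseteq(1-t/r(C))C$ (valid for $0<t<r(C)$, by writing a point of $(1-t/r(C))C$ as a convex combination of a point of $C$ and a point of $r(C)B^{24}\subseteq C$) gives $V(C)-V(C\ominus tB^{24})=O(t/r(C))\,V(C)$; applied with $t\asymp L$ this shows that the cubes of $\mathcal Q$ cover all but an $O(\varepsilon)$--fraction of $V(C)$ and, since the centers of $\Xi$ are $2$--separated, contain all but $O(\varepsilon)V(C)$ of the points of $\Xi$. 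Hence the average over $\mathcal Q$ of the local density $d_{v}:=L^{-24}\#(\Xi\cap Q_{v})$ is at least $1-O(\varepsilon)$.

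Next I would turn each $Q_{v}$ into a periodic packing: delete from $\Xi\cap Q_{v}$ the points within distance $2$ of $\partial Q_{v}$ and repeat the remainder by $L\Z^{24}$, producing an honest $L\Z^{24}$--periodic packing $P_{v}+B^{24}$ whose center density is $\ge d_{v}-O(1/L)$ and, by Theorem~\ref{Leech} together with Groemer's reduction of the packing density to periodic packings, is also $\le1$. Since the nonnegative numbers $1+O(1/L)-d_{v}$ have average $O(\varepsilon)$, Markov's inequality leaves all but an $O(\sqrt\varepsilon)$--fraction of \emph{good} cubes on which $d_{v}\ge1-\sqrt\varepsilon$, so that $P_{v}$ has center density at least $1-\varepsilon''$ with $\varepsilon'':=2\sqrt\varepsilon$. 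Applying Theorem~\ref{PeriodicPacking24stab0} to each good $P_{v}$ --- note $R_{\varepsilon''}=\tfrac12 R_{\varepsilon}(1+o(1))$, so the requirement ${\rm diam}\,K\le 2^{140}R_{\varepsilon}$ is admissible after a slight and explicit enlargement of the constant --- yields, with probability $\ge1-\tilde{c}_{24}R_{\varepsilon''}^{-1/2}$ over $x\in\R^{24}/L\Z^{24}\cong Q_{v}$, an isometry $\Phi$, a set $Z\subseteq\Lambda_{24}$, the bound $\#(P_{v}\cap(x+K))\ge(1-\tilde{c}_{24}R_{\varepsilon''}^{-1/2})V(K)$, the Hausdorff estimate $d_{H}(\Phi Z,P_{v}\cap(x+K))\le\tilde{c}_{24}\,e^{R_{\varepsilon''}/\tilde{c}_{24}}(\varepsilon'')^{1/4}$, and the gap estimate relative to $\Phi\Lambda_{24}\cap(x+K)$.

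Finally I would undo the periodization locally and glue the cubes. If $x$ lies in the sub-cube $Q_{v}\ominus({\rm diam}\,K+2)B^{24}$ --- all but an $O(\varepsilon R_{\varepsilon})$--fraction of $Q_{v}$ --- then $x+K\subseteq Q_{v}$ and $x+K$ is disjoint from the deleted boundary shell, so $P_{v}\cap(x+K)=\Xi\cap(x+K)$ and the statements above become assertions about the original packing. Inserting the explicit bound for $\alpha_{24}$ from Lemma~\ref{tkg8} (as propagated through Theorem~\ref{PeriodicPacking24stab0}) together with $\varepsilon''=2\sqrt\varepsilon$, the factor $e^{R_{\varepsilon''}/\tilde{c}_{24}}$ is $\varepsilon^{-o(1)}$, so the Hausdorff bound is at most $\varepsilon^{1/8-o(1)}\le\varepsilon^{1/9}$ once $\varepsilon$ is small. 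Drawing $x$ uniformly in $C$, it falls in a good cube with probability $\ge1-O(\sqrt\varepsilon)$, then in the corresponding sub-cube with conditional probability $\ge1-O(\varepsilon R_{\varepsilon})$, and then the conclusion of Theorem~\ref{PeriodicPacking24stab0} holds there with conditional probability $\ge1-\tilde{c}_{24}R_{\varepsilon''}^{-1/2}$; since $\sqrt\varepsilon$ and $\varepsilon R_{\varepsilon}$ are both $\ll R_{\varepsilon}^{-1/2}$ and $R_{\varepsilon''}\asymp R_{\varepsilon}$, these combine to the asserted probability $1-\tilde{c}_{24}R_{\varepsilon}^{-1/2}$ after absorbing constants (the gap estimate being inherited with a slightly larger constant). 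The case of $\R^{8}$ is identical, using Theorem~\ref{PeriodicPacking8stab0} and $g_{8}$ in place of Theorem~\ref{PeriodicPacking24stab0} and $g_{24}$.

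I do not expect any single hard estimate here; the main obstacle is the simultaneous bookkeeping of all the error terms. One has to pick $L$ so that at once the boundary layer of $C$ is negligible (this is exactly where the hypothesis $r(C)\ge\varepsilon^{-2}$ is used), the points thrown away when periodizing a cube cost only a lower-order fraction of its density, the frame $x+K$ fits strictly inside a cube and misses the deleted shell, and the degraded parameter $\varepsilon''\asymp\sqrt\varepsilon$ still keeps $R_{\varepsilon''}$ comparable to $R_{\varepsilon}$ and the Hausdorff exponent safely above $1/9$; and one must track the three nested probabilities (good cube, interior sub-cube, periodic conclusion) while using that the uniform measure on $C$ restricts to the uniform measure on the torus on each cube. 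Each of these is routine on its own, but getting all the constants to line up with the stated $2^{140}$, $\varepsilon^{1/9}$ and $R_{\varepsilon}^{-1/2}$ is where the care is needed.
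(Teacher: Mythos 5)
Your proposal is correct and takes essentially the same route as the paper: partition $C$ into cubes of edge $\varepsilon^{-1}$, use an averaging/Markov argument to find a large fraction of cubes with local density $\geq 1 - O(\sqrt{\varepsilon})$, periodize each good cube, apply Theorem~\ref{PeriodicPacking24stab0} with the degraded parameter $\varepsilon'' \asymp \sqrt{\varepsilon}$ (noting $R_{\varepsilon''}\asymp R_\varepsilon$), and observe that for $x$ in the interior sub-cube the periodic conclusion transfers verbatim to $\Xi$. The one implementation difference is cosmetic: the paper periodizes a cube's contents by translating along the slightly expanded lattice $(2+\varepsilon^{-1})\Z^{24}$, inserting a gap of width $2$ between copies, whereas you delete the $2$-shell of each cube before periodizing by $\varepsilon^{-1}\Z^{24}$; both cost the same $O(1/L)$ in density and yield identical bookkeeping.
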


\begin{remark}
	More precisely, $d_H\Big(\Phi Z,\Xi\cap (x+K)\Big)\leq \tilde{c}_8 e^{\frac{1}{\tilde{c}_{8}} R_{\varepsilon}} \varepsilon^{\frac{1}{8}}=\varepsilon^{\frac{1}{8}-o(1)}$ holds in Theorem \ref{BinPacking8stab} as $\varepsilon$ goes to zero. Analogously, $d_H\Big(\Phi Z,\Xi\cap (x+K)\Big)\leq \tilde{c}_{24} e^{\frac{1}{\tilde{c}_{24}} R_{\varepsilon}} \varepsilon^{\frac{1}{8}}=\varepsilon^{\frac{1}{8}-o(1)}$ also holds in Theorem \ref{BinPacking24stab} as $\varepsilon \to 0.$

	It follows from \eqref{denseinCE8} (resp \eqref{denseinCLeech}) that Theorem~\ref{BinPacking8stab} (resp. Theorem \ref{BinPacking24stab}) applies to the densest (finite) packing of balls of radius $\frac{\sqrt{2}}2$ into $C$.
\end{remark}

Before moving on to the proofs of Theorems \ref{BinPacking8stab} and \ref{BinPacking24stab}, we first prove that their statement are, in fact, \emph{nontrivial}. This is the content of the following proposition  

\begin{prop} The following assertions hold. 
	\begin{enumerate}
		\item Given $C\subset \R^8$ with inradius $r(C)\geq 16$, there exists a finite packing $\Xi+\frac{\sqrt{2}}2\,B^8\subset C$ with
		\begin{equation}
			\label{denseinCE8}
			\#\Xi\geq \left(1-\frac{8}{r(C)}\right)V(C);
		\end{equation}
	   \item Given
	   $C\subset \R^{24}$ with inradius $r(C)\geq 48$,
	   there exists a finite packing $\Xi+B^{24}\subset C$ with
	   \begin{equation}
	   	\label{denseinCLeech}
	   	\#\Xi\geq \left(1-\frac{24}{r(C)}\right)V(C).
	   \end{equation} 
	\end{enumerate}
\end{prop}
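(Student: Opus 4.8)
The plan is to prove the two statements in parallel, since the dimension-$24$ case is obtained from the dimension-$8$ case by an entirely analogous argument (replacing $E_8$ by $\Lambda_{24}$, the radius $\tfrac{\sqrt 2}{2}$ by $1$, the covering radius $\sqrt 2$ of $E_8$ by the covering radius $2$ of $\Lambda_{24}$, and the numerical constants accordingly). So I will describe the construction for $n=8$ and only indicate the change of constants at the end. The idea is simply to take the optimal lattice packing $\tfrac{1}{\sqrt 2}E_8 + \tfrac{\sqrt 2}{2}B^8$, which tiles space in the sense that $\tfrac{1}{\sqrt 2}E_8$ has covolume $1$ and its packing of balls of radius $\tfrac{\sqrt 2}{2}$ is optimal with center density $1$, and to intersect it with $C$, discarding the centers that lie too close to the boundary $\partial C$.

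\textbf{Step 1: set-up of the candidate point set.} Let $L_0 = \tfrac{1}{\sqrt 2}E_8$, so that $\det L_0 = 1$, $\lambda(L_0)=\sqrt 2$, and $L_0 + \tfrac{\sqrt 2}{2}B^8$ is a packing whose covering radius equals $\tfrac{1}{\sqrt 2}\cdot\sqrt 2 = 1$ (the covering radius of $E_8$ is $1$, so that of $\tfrac{1}{\sqrt 2}E_8$ is $\tfrac1{\sqrt2}$; actually one should just use the numerical value of the covering radius of $E_8$, which is $1$, giving covering radius $\tfrac{1}{\sqrt2}$ for $L_0$ — in any case it is an absolute constant bounded by $1$). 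Define
\[
\Xi = \Big\{ x \in L_0 \;:\; x + \tfrac{\sqrt 2}{2}B^8 \subset C \Big\}.
\]
Then $\Xi + \tfrac{\sqrt 2}{2}B^8 \subset C$ is a packing by construction, so it remains only to bound $\#\Xi$ from below.

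\textbf{Step 2: counting via the covering/tiling property.} The complement $C \setminus \bigcup_{x\in\Xi}(x+\tfrac{\sqrt2}{2}B^8)$ is contained in the set of points of $C$ within distance $1+\tfrac{\sqrt2}{2} \le 2$ of $\partial C$ (any point of $C$ farther than the covering radius from $\partial C$ is covered by some translate $x+\tfrac{\sqrt2}{2}B^8$ with $x\in L_0$, and that translate then lies in $C$, so $x\in\Xi$). Write $C_{-t} = \{y\in C : y + tB^8 \subset C\}$ for the inner parallel body. Then every $x\in L_0$ with $x\in C_{-1}$ satisfies $x+\tfrac{\sqrt2}{2}B^8\subset C$, hence $x\in\Xi$, so $\#\Xi \ge \#(L_0\cap C_{-1})$. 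Since $L_0$ has covolume $1$ and $C_{-1}$ is convex containing a ball of radius $r(C)-1$, a standard lattice point count (for instance the bound $\#(L_0\cap C_{-1})\ge V(C_{-1}) - (\text{surface term})$, or more simply covering $C_{-1}$ by Dirichlet cells of $L_0$, each of diameter $\le 2$ and volume $1$, contained in $C_{-1}+B^8 \subset C$) gives $\#\Xi \ge V(C_{-1})$. Finally, the convexity of $C$ and the inequality $r(C)\ge 16$ give $V(C_{-1}) \ge (1-\tfrac{1}{r(C)})^8 V(C) \ge (1-\tfrac{8}{r(C)})V(C)$ by the elementary estimate $V(C_{-t})\ge (1-t/r(C))^n V(C)$ for a convex body with inradius $r(C)$. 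Combining these yields \eqref{denseinCE8}.

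\textbf{Step 3: the $24$-dimensional case.} Run the identical argument with $L_0 = \tfrac12\Lambda_{24}$ (so $\det L_0 = 1$, $\lambda(L_0)=2$, and $L_0+B^{24}$ is the optimal packing with center density $1$), with balls of radius $1$, and with the covering radius of $\Lambda_{24}$, which is $\sqrt 2$, so that $L_0$ has covering radius $\sqrt2/2 <1$; the boundary layer one discards then has width at most $\sqrt2/2 + 1 < 2$, Dirichlet cells have diameter $<2\sqrt2<3$, and $V(C_{-1})\ge(1-\tfrac{1}{r(C)})^{24}V(C)\ge(1-\tfrac{24}{r(C)})V(C)$ when $r(C)\ge 48$. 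This gives \eqref{denseinCLeech}.

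The only genuinely delicate point is making sure the boundary-layer and lattice-point-count constants are honestly no larger than what is claimed; I expect the convexity estimate $V(C_{-t})\ge(1-t/r(C))^n V(C)$ and the fact that every Dirichlet cell of $L_0$ meeting $C_{-1}$ is contained in $C$ to be enough, with the inradius hypotheses $r(C)\ge 16$ (resp.\ $48$) chosen precisely so that $t/r(C)$ is small enough for the binomial inequality $(1-t/r(C))^n \ge 1 - nt/r(C)$ to be applied cleanly. Everything else is routine.
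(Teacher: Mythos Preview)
Your overall strategy---shrink $C$ slightly and intersect with the optimal lattice---is exactly the paper's, but two points need correction.

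First, a scaling slip: you want $L_0=E_8$ itself, which already satisfies $\det L_0=1$ and $\lambda(L_0)=\sqrt 2$; the lattice $\tfrac{1}{\sqrt 2}E_8$ has covolume $2^{-4}$ and minimal length $1$, so none of the properties you list hold for it. Likewise in dimension $24$ one takes $L_0=\Lambda_{24}$, not $\tfrac12\Lambda_{24}$. This is cosmetic.

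Second, and this is the substantive gap you yourself flag at the end, your Dirichlet-cell count does not deliver the constant $8/r(C)$. A Voronoi cell of $E_8$ has circumradius equal to the covering radius $1$, hence diameter up to $2$; a cell meeting $C_{-1}$ is therefore only guaranteed to lie in $C_{-1}+2B^8$, not in $C_{-1}+B^8$ as you write. If you repair this by starting from $C_{-2}$ the argument goes through but yields $\#\Xi\ge V(C_{-2})\ge(1-2/r(C))^8V(C)\ge(1-16/r(C))V(C)$, off by a factor of $2$ from what is claimed. The alternative you mention, $\#(L_0\cap C_{-1})\ge V(C_{-1})-(\text{surface term})$, has the same defect: the surface correction is of order $V(C)/r(C)$ and again spoils the constant.

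The paper sidesteps this boundary loss by allowing a \emph{translate} of the lattice. It takes the homothetic copy $C_0=(1-\tfrac{1}{r(C)})C$ (so that $C_0+B^8\subset C$ once the inball is centred at the origin, since $B^8\subset \tfrac{1}{r(C)}C$) and uses the elementary averaging identity
\[
\frac{1}{\det E_8}\int_{\R^8/E_8}\#\big(C_0\cap(E_8-x)\big)\,dx \;=\; V(C_0).
\]
Hence some $x$ gives $\#\big(C_0\cap(E_8-x)\big)\ge V(C_0)=(1-1/r(C))^8V(C)\ge(1-8/r(C))V(C)$, and one sets $\Xi=C_0\cap(E_8-x)$. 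The $24$-dimensional case is identical with $\Lambda_{24}$ in place of $E_8$ and the Bernoulli bound $(1-1/r(C))^{24}\ge 1-24/r(C)$.
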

\begin{proof}
We note that for any finite packing $\Xi+\frac{\sqrt{2}}2\,B^8\subset C$, we have
$\#\Xi\leq \left(1+\frac{8}{r(C)}\right)V(C)$ by Lemma~\ref{finitepackigupp}. Similarly, for any finite packing $\Xi+B^{24}\subset C$, we have
$\#\Xi\leq \left(1+\frac{24}{r(C)}\right)V(C)$ by Lemma~\ref{finitepackigupp}.

In the $8$-dimensional case, we may assume that $r(C)\cdot B^8\subset C$, and hence $C_0= (1-\frac1{r(C)})C$ satisfies
	$$
	C_0+B^8\subset C.
	$$
	Now, there exists some $x\in\R^8$ such that 
	$$
	\#\Big(C_0\cap (E_8-x)\Big)\geq V(C_0)=\left(1-\frac1{r(C)}\right)^8V(C)\geq \left(1-\frac8{r(C)}\right)V(C);
	$$
	thus $\Xi=C_0\cap (E_8-x)$ is a suitable set for the packing construction in dimension 8. The 24-dimensional argument is entirely analogous. 
\end{proof} 

We now present a proof of Theorems \ref{BinPacking8stab} and \ref{BinPacking24stab}. As the proof in the 24-dimensional context is almost identical to the 8-dimensional one, we decided to omit the former and only include the latter. 

\begin{proof}[Proof of Theorems \ref{BinPacking8stab} and \ref{BinPacking24stab}] We first consider auxiliary cubes of edge length $\varepsilon^{-1}$  in $C$; namely, let
	$$
	\Theta=\{z\in \varepsilon^{-1}\Z^8\colon z+[0,\varepsilon^{-1}]^8\subset C\},
	$$
	and hence, as $r(C)\geq \varepsilon^{-2}$,
	\begin{equation}
		\label{Thetaeps}
		\sum_{z\in\Theta}\#\Big((x+[0,\varepsilon^{-1})^8)\cap \Xi\Big)\geq (1-O(\varepsilon))V(C).
	\end{equation}
	For the following set of regular translates in $\Theta,$
	$$
	\Theta_0=\left\{z\in \Theta\colon \#\Big((z+[0,\varepsilon^{-1})^8)\cap \Xi\Big)\geq (1-\sqrt{\varepsilon})
	V\Big([0,\varepsilon^{-1})^8\Big)\right\},
	$$
	an analogous argument as that in Theorem~\ref{PeriodicPacking8stab0} shows that
	\begin{equation}
		\label{Theta0eps}
		\#\Theta_0\geq \Big(1-O\Big(\sqrt{\varepsilon}\Big)\Big)\cdot \#\Theta.
	\end{equation}
	
	For a fixed $z\in\Theta_0$, we consider the periodic sphere packing
	$$
	\left(\sqrt{2}+\varepsilon^{-1}\right)\cdot\Z^8+\left(((z+[0,\varepsilon^{-1})^8)\cap \Xi\right)+\frac{\sqrt{2}}2\,B^8=
	\Xi_z+\frac{\sqrt{2}}2\,B^8,
	$$ 
	that has center density at least $(1-\sqrt{\varepsilon})\left(\frac{\varepsilon^{-1}}{\sqrt{2}+\varepsilon^{-1}}\right)^8 >1-2\sqrt{\varepsilon}$, as long as $\varepsilon$ is sufficiently small. 
	
	Now, we wish to apply Theorem~\ref{PeriodicPacking8stab0}
	to the periodic sphere packing $\Xi_z+\frac{\sqrt{2}}2\,B^8$.
	We observe that $\lim_{\varepsilon\to 0^+}\frac{R_{2\sqrt{\varepsilon}}}{R_\varepsilon}=\frac12$; therefore, we may safely use $R_\varepsilon$ in place of $R_{2\sqrt{\varepsilon}}$.	It follows thus from Theorem~\ref{PeriodicPacking8stab0} that
	with probability at least $1-O\left(R_\varepsilon^{-\frac{1}2}\right)$, $x\in\R^8$ satisfies that 
	$\# (\Xi_z\cap (x+K))\geq \Big(1-O\Big(R_\varepsilon^{-\frac{1}2}\Big)\Big)V(K)$ and
	there exist a $Z\subset E_8$ and an isometry $\Phi$ of $\R^8$ such that
	$$
	d_H\Big(\Phi Z,\Xi_z\cap (x+K)\Big)=
	O\Big( e^{\frac{1}{\tilde{c}_{8}} R_{\varepsilon}} \sqrt{\varepsilon}^{\frac{1}{4}}\Big)=
	O\Big( e^{\frac{1}{\tilde{c}_{8}} R_{\varepsilon}} \varepsilon^{\frac{1}{8}}\Big).
	$$ 
	As $\varepsilon R_\varepsilon<R_\varepsilon^{-\frac{1}2}$, we deduce that
	with probability at least $1-O\left(R_\varepsilon^{-\frac{1}2}\right)$ with respect to the uniform probability measure on 
	$z+[0,\varepsilon^{-1})^8$, $x\in z+[0,\varepsilon^{-1})^8$ satisfies that 
	$$
	x+K\subset{\rm int}\left(z+[0,\varepsilon^{-1})^8\right),
	$$
	$\# (\Xi\cap (x+K))\geq \Big(1-O\Big(R_\varepsilon^{-\frac{1}2}\Big)\Big)V(K)$ and
	there exist a $Z\subset E_8$ and an isometry $\Phi$ of $\R^8$ such that
	\begin{equation}
		\label{withinTheta0eps}
		d_H\Big(\Phi Z,\Xi\cap (x+K)\Big)=
		O\Big( e^{\frac{1}{\tilde{c}_{8}} R_{\varepsilon}} \varepsilon^{\frac{1}{8}}\Big).
	\end{equation}
	The small gap assertion on $Z$ follows directly from the application of Theorem \ref{PeriodicPacking8stab0}. We observe that for any absolute constant $c>0$, there exists $\varepsilon_0$ such that
	$c\cdot  e^{\frac{1}{\tilde{c}_{8}} R_{\varepsilon}} \varepsilon^{\frac{1}{8}}<\varepsilon^{\frac{1}{9}}$
	if $\varepsilon\in(0,\varepsilon_0)$. 
	Therefore, combining \eqref{Thetaeps}, \eqref{Theta0eps} and \eqref{withinTheta0eps} yields 
 Theorems \ref{BinPacking8stab} and \ref{BinPacking24stab}.
\end{proof}

Finally, we comment on \emph{general packings}. In that case, the statements are even less precise because arbitrarily large holes can be left out even from the densest possible packing. 
According to Groemer \cite{Gro63}, however, the upper density of any general packing can be approximated by densities of periodic packings.
In particular, if $\Xi+\frac{\sqrt{2}}2\,B^8$ is a general packing, then Theorem~\ref{E8} yields that
$\Delta_{\rm upp}(\Xi,\frac{\sqrt{2}}2\,B^8)\leq 1$. Similarly, if $\Xi+B^{24}$ is a general packing, then  Theorem~\ref{Leech} implies that
$\Delta_{\rm upp}(\Xi,B^{24})\leq 1$.

Suppose now $\Xi+\frac{\sqrt{2}}2\,B^8$ is a general sphere packing in dimension 8, such that $\Delta_{\rm upp}(\Xi, \frac{\sqrt{2}}{2} B^8) \ge 1-\varepsilon.$ By the aforementioned result in \cite{Gro63}, rhere exists a sequence of radii $\varrho_i>1$,
$i=1,2,\ldots$ with $\varrho_i\to \infty$ such that for each $\varrho_i$ sufficiently large, we have
$$
\#(\Xi\cap \varrho_iB^8)>(1-2\varepsilon)V(\varrho_iB^8).
$$
This shows that the bin packing $\Xi \cap \varrho_i B^8$ satisfies the hypotheses of Theorem~\ref{BinPacking8stab} with $2\varepsilon,$ as long as $\varrho_i > R_{2\varepsilon}^2.$ Applying that result, we obtain directly the following theorem:

\begin{theorem} 
	\label{GeneralPacking8stab}
	There exist explicitly computable values $\tilde{\varepsilon}_8, \tilde{c}_8 >0$ such for  any 
	$\varepsilon\in(0,\tilde{\varepsilon}_8)$, if
	$R_{\varepsilon} = \frac{|\log \varepsilon|}{\log |\log \varepsilon|}$, then the following property holds:
	
	If a packing $\Xi+\frac{\sqrt{2}}2\,B^8\subset \R^8$ of balls  satisfies $\Delta_{\rm upp}(\Xi,B^8)\geq 1-\varepsilon$, and
	$K$ is a centered convex body containing a ball of radius $R_\varepsilon$ and
	having diameter ${\rm diam}\,K\leq 2^{20}R_\varepsilon$, then there exists
	a sequence of radii $\varrho_i>1$,
	$i=1,2,\ldots$ with $\varrho_i\to \infty$ such that for each $\varrho_i$,
	with probability at least $1-\tilde{c}_8R_\varepsilon^{-\frac{1}2}$ with respect to the uniform density in $\varrho_iB^8$, 
	$x\in \varrho_iB^8$ satisfies that 
	$\# (\Xi\cap (x+K))\geq (1-\tilde{c}_8R_\varepsilon^{-\frac{1}2})V(K)$ and
	$$
	d_H\Big(\Phi Z_i,\Xi\cap (x+K)\Big)\leq \varepsilon^{\frac{1}{9}}
	\mbox{ where $Z_i\subset E_8$ and $\Phi$ is an isometry of $\R^8$.}
	$$ 
	Moreover, the sets $Z_i$ have small gaps when compared to a localized version of $E_8,$ in the sense that 
	\[
	\left|\frac{\#(\Phi Z_i)}{\#((\Phi E_8) \cap (x+K))} - 1 \right| \le \frac{C}{\sqrt{R_{\varepsilon}}},
	\]
	for some absolute computable constant $C>0.$ 
\end{theorem}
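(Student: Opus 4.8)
The plan is to obtain Theorem~\ref{GeneralPacking8stab} as an immediate consequence of the bin–packing statement Theorem~\ref{BinPacking8stab}, applied to suitable large spherical truncations of $\Xi$; no new geometry is needed.

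First I would fix the sequence of radii. By the discussion preceding the statement --- that is, by the definition of $\Delta_{\rm upp}(\Xi,\tfrac{\sqrt{2}}2 B^8)$ as a $\limsup$, together with Groemer's approximation of the upper density by densities of periodic packings \cite{Gro63} --- there is a sequence $\varrho_i\to\infty$ with $\#(\Xi\cap\varrho_i B^8)\ge(1-2\varepsilon)V(\varrho_i B^8)$ for every $i$. Discarding finitely many indices, I may also assume $\varrho_i\ge(3\varepsilon)^{-2}$ and $(1+\tfrac{\sqrt{2}}{2\varrho_i})^{8}\le 1+\varepsilon$ for all $i$. For each $i$ set $\Xi_i=\Xi\cap\varrho_i B^8$ and $C_i=(\varrho_i+\tfrac{\sqrt{2}}2)B^8$, so that $\Xi_i+\tfrac{\sqrt{2}}2 B^8\subset C_i$ is a finite packing with $r(C_i)\ge(3\varepsilon)^{-2}$, and the volume comparison gives
\[
\#\Xi_i\ge(1-2\varepsilon)V(\varrho_i B^8)\ge(1-2\varepsilon)(1+\varepsilon)^{-1}V(C_i)\ge(1-3\varepsilon)V(C_i).
\]
Hence each $\Xi_i+\tfrac{\sqrt{2}}2 B^8\subset C_i$ satisfies the hypotheses of Theorem~\ref{BinPacking8stab} with $3\varepsilon$ in place of $\varepsilon$.

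Next I would invoke Theorem~\ref{BinPacking8stab} for each $i$. Since $R_{c\varepsilon}/R_\varepsilon\to 1$ for any fixed $c>0$, the quantity $R_\varepsilon$ may be used in place of $R_{3\varepsilon}$, and any $K$ as in the statement is admissible. The theorem then produces, for each $i$, a set $Z_i\subset E_8$ and an isometry $\Phi$ of $\R^8$ such that, with probability at least $1-\tilde{c}_8 R_\varepsilon^{-1/2}$ with respect to the uniform density on $C_i$, the point $x$ satisfies $\#(\Xi\cap(x+K))\ge(1-\tilde{c}_8 R_\varepsilon^{-1/2})V(K)$, the small--gap bound, and $d_H(\Phi Z_i,\Xi\cap(x+K))\le(3\varepsilon)^{1/9}$. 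Because the Hausdorff bound furnished by Theorem~\ref{BinPacking8stab} is really of the form $\tilde{c}_8 e^{R_\varepsilon/\tilde{c}_8}\varepsilon^{1/8}=\varepsilon^{1/8-o(1)}$, it is in particular $\le\varepsilon^{1/9}$ once $\varepsilon$ lies below an absolute threshold, so after shrinking $\tilde\varepsilon_8$ the exponent $\tfrac19$ in the stated form is attained. Finally, since $V(\varrho_i B^8)/V(C_i)=(1+\tfrac{\sqrt{2}}{2\varrho_i})^{-8}\to 1$, replacing the uniform measure on $C_i$ by the uniform measure on $\varrho_i B^8$ --- and restricting to those good $x$ with $x+K\subset\varrho_i B^8$, which are all but an $o(1)$-fraction --- alters the probability by $o(1)$ as $i\to\infty$; discarding finitely many more indices, this loss is absorbed into $\tilde{c}_8$, yielding all the assertions.

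I expect no serious obstacle here: the entire substance is already contained in Theorem~\ref{BinPacking8stab}, and the only point demanding attention is the chain of cheap parameter adjustments --- $\varepsilon\rightsquigarrow 2\varepsilon\rightsquigarrow 3\varepsilon$, the passage from $R_{3\varepsilon}$ to $R_\varepsilon$, the replacement of $(3\varepsilon)^{1/9}$ by $\varepsilon^{1/9}$, and the transfer of the good-point probability from $C_i$ to the slightly smaller ball $\varrho_i B^8$ --- each of which is licensed precisely because the Hausdorff exponent coming out of the bin--packing theorem is $\tfrac18-o(1)$, strictly better than the advertised $\tfrac19$.
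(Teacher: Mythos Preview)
Your proposal is correct and follows essentially the same approach as the paper: the paper also deduces the general-packing statement directly from the bin-packing Theorem~\ref{BinPacking8stab}, by first invoking Groemer's result to obtain radii $\varrho_i\to\infty$ with $\#(\Xi\cap\varrho_iB^8)>(1-2\varepsilon)V(\varrho_iB^8)$ and then applying Theorem~\ref{BinPacking8stab} to the truncations. Your version is in fact slightly more careful than the paper's in that you enlarge the container to $C_i=(\varrho_i+\tfrac{\sqrt2}{2})B^8$ to guarantee $\Xi_i+\tfrac{\sqrt2}{2}B^8\subset C_i$, and you explicitly handle the transfer of the probability from $C_i$ back to $\varrho_iB^8$; the paper leaves both of these implicit.
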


The proof above may be completely adapted to the 24 dimensional case, which yields the following result on stability of general packings in dimension 24: 

\begin{theorem} 
	\label{GeneralPacking24stab}
	There exist explicitly computable values $\tilde{\varepsilon}_{24}, \tilde{c}_{24} >0$ such for  any 
	$\varepsilon\in(0,\tilde{\varepsilon}_{24})$, if
	$R_{\varepsilon} = \frac{|\log \varepsilon|}{\log |\log \varepsilon|}$, then the following property holds:
	
	If a packing $\Xi+B^{24}\subset \R^{24}$ of balls  satisfies $\Delta_{\rm upp}(\Xi,B^{24})\geq 1-\varepsilon$, and
	$K$ is a centered convex body containing a ball of radius $R_\varepsilon$ and
	having diameter ${\rm diam}\,K\leq 2^{140}R_\varepsilon$, then 
	there exists
	a sequence of radii $\varrho_i>1$,
	$i=1,2,\ldots$ with $\varrho_i\to \infty$ such that for each $\varrho_i$ sufficiently large, with probability at least $1-\tilde{c}_{24}R_\varepsilon^{-\frac{1}2}$ with respect to the uniform density in $\varrho_i B$, $x\in \varrho_iB$ satisfies that 
	$\# (\Xi\cap (x+K))\geq (1-\tilde{c}_{24}R_\varepsilon^{-\frac{1}2})V(K)$ and
	$$
	d_H\Big(\Phi Z_i,\Xi\cap (x+K)\Big)\leq \varepsilon^{\frac{1}{9}}
	\mbox{ where $Z_i\subset \Lambda_{24}$ and $\Phi$ is an isometry of $\R^{24}$.}
	$$ 
	Moreover, the sets $Z_i$ have small gaps when compared to a localized version of $\Lambda_{24},$ in the sense that 
	\[
	\left|\frac{\#(\Phi Z_i)}{\#((\Phi \Lambda_{24}) \cap (x+K))} - 1 \right| \le \frac{C'}{\sqrt{R_{\varepsilon}}},
	\]
	where $C' > 0$ is an absolute computable constant. 
\end{theorem}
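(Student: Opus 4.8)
The plan is to obtain Theorem~\ref{GeneralPacking24stab} as a direct corollary of the bin-packing result Theorem~\ref{BinPacking24stab}, mirroring verbatim the deduction of Theorem~\ref{GeneralPacking8stab} from Theorem~\ref{BinPacking8stab}; the dimension is the only thing that changes and it enters only through the numerical constants. First I would use the fact, recorded above following Groemer~\cite{Gro63} (and which also follows by unwinding the $\limsup$ in the definition of $\Delta_{\rm upp}$), that from $\Delta_{\rm upp}(\Xi,B^{24})\ge 1-\varepsilon$ one extracts a sequence of radii $\varrho_i>1$ with $\varrho_i\to\infty$ along which
$$
\#(\Xi\cap \varrho_iB^{24})>(1-2\varepsilon)\,V(\varrho_iB^{24}).
$$
Discarding the finitely many $\varrho_i$ with $\varrho_i\le R_{2\varepsilon}^2$, the convex body $C_i:=\varrho_iB^{24}$ has inradius $r(C_i)=\varrho_i\ge (2\varepsilon)^{-2}$, and the finite set $\Xi_i:=\Xi\cap C_i$ is a packing (being a sub-packing of $\Xi+B^{24}$) with $\#\Xi_i\ge(1-2\varepsilon)V(C_i)$; thus $\Xi_i+B^{24}\subset C_i$ satisfies the hypotheses of Theorem~\ref{BinPacking24stab} with the parameter $2\varepsilon$ in place of $\varepsilon$, provided $\varepsilon$ is small enough that $2\varepsilon<\tilde\varepsilon_{24}$.

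Next I would apply Theorem~\ref{BinPacking24stab} to each $C_i$ with parameter $2\varepsilon$. Since $R_{2\varepsilon}$ and $R_\varepsilon$ are comparable (indeed $R_{2\varepsilon}/R_\varepsilon\to1$ as $\varepsilon\to0$), one may restate the conclusion in terms of $R_\varepsilon$ at the cost of absorbing bounded factors into $\tilde c_{24}$; in particular a centered convex body $K$ containing a ball of radius $R_\varepsilon$ with ${\rm diam}\,K\le 2^{140}R_\varepsilon$ still meets the hypotheses of Theorem~\ref{BinPacking24stab} with parameter $2\varepsilon$. The theorem then yields, for each fixed $\varrho_i$, that with probability at least $1-\tilde c_{24}R_\varepsilon^{-1/2}$ with respect to the uniform density in $\varrho_iB^{24}$, a point $x\in\varrho_iB^{24}$ satisfies $\#(\Xi\cap(x+K))\ge(1-\tilde c_{24}R_\varepsilon^{-1/2})V(K)$ and there are $Z_i\subset\Lambda_{24}$ and an isometry $\Phi$ of $\R^{24}$ with
$$
d_H\big(\Phi Z_i,\Xi\cap(x+K)\big)\le (2\varepsilon)^{1/9},
\qquad
\Big|\tfrac{\#(\Phi Z_i)}{\#((\Phi\Lambda_{24})\cap(x+K))}-1\Big|\le \tfrac{C'}{\sqrt{R_\varepsilon}}.
$$
Finally, invoking the refined bound from the remark following Theorem~\ref{BinPacking24stab}, namely $d_H(\Phi Z_i,\Xi\cap(x+K))\le \tilde c_{24}e^{R_\varepsilon/\tilde c_{24}}\varepsilon^{1/8}$ (applied with $2\varepsilon$ it contributes only a bounded factor), together with the elementary observation that for any absolute constant $c>0$ one has $c\,e^{R_\varepsilon/\tilde c_{24}}\varepsilon^{1/8}<\varepsilon^{1/9}$ once $\varepsilon$ is small enough (the exponential factor grows only like $\exp(|\log\varepsilon|/\log|\log\varepsilon|)$, which is dominated by any fixed negative power of $\varepsilon$, in particular by $\varepsilon^{-(1/8-1/9)}$), one upgrades the Hausdorff estimate to the clean form $d_H(\Phi Z_i,\Xi\cap(x+K))\le\varepsilon^{1/9}$ stated in the theorem, with the $\varrho_i$ inherited from the first step.

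There is essentially no substantive obstacle here, and I expect the only points needing care to be of a bookkeeping nature: (i) confirming that the inradius hypothesis $r(C)\ge\varepsilon^{-2}$ of Theorem~\ref{BinPacking24stab} is met, which is exactly why the conclusion is stated only ``for each $\varrho_i$ sufficiently large''; (ii) the interchangeability of $R_{2\varepsilon}$ and $R_\varepsilon$ and of the associated bounded constants, so that the probability level, the density deficit, and the error exponent can all be phrased in terms of $R_\varepsilon$; and (iii) verifying that $\Xi\cap\varrho_iB^{24}$ is genuinely a finite packing in $C_i$ of the required density. The one mildly analytic point is the domination of $e^{R_\varepsilon/\tilde c_{24}}$ by a power of $\varepsilon$, which is immediate from the choice $R_\varepsilon=|\log\varepsilon|/\log|\log\varepsilon|$. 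The mathematical content is entirely concentrated in Theorems~\ref{PeriodicPacking24stab0} and~\ref{BinPacking24stab}; the present argument only transports it along the chain (periodic) $\Rightarrow$ (bin) $\Rightarrow$ (general).
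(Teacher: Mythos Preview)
Your proposal is correct and follows exactly the paper's approach: reduce the general packing to a sequence of bin packings in balls $\varrho_i B^{24}$ via the $\limsup$ definition of upper density (i.e., Groemer's observation), then apply Theorem~\ref{BinPacking24stab} with parameter $2\varepsilon$ to each sufficiently large $\varrho_i$. Your write-up is in fact more careful than the paper's one-line reduction; the only slip is the threshold you write when ``discarding finitely many $\varrho_i$'': you need $\varrho_i\ge(2\varepsilon)^{-2}$ to meet the inradius hypothesis of Theorem~\ref{BinPacking24stab}, not merely $\varrho_i>R_{2\varepsilon}^2$, but since $\varrho_i\to\infty$ this is immaterial and you correctly flag the right condition in your point (i).
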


\section*{Acknowledgements} 

J.P.G.R. acknowledges support by the ERC grant RSPDE 721675, and K.J.B. ackowledges the hospitality of FIM at ETH Z\"urich where part of the research was done, and the support by NKFIH grant 132002.  All authors would like to express their deepest gratitude towards the anonymous referee  for indicating how to prove the current of version of Theorems~\ref{E8stab} and \ref{Leechstab} through adapting the techniques from \cite{CoK09}.

\end{document}